\newcommand\@shorttitle{}
\newcommand\shorttitle[1]{\renewcommand\@shorttitle{#1}}
    \newtheorem{prop}{Proposition}[section]
    \numberwithin{prop}{section} 
    \newtheorem{lem}{Lemma}[section]
    \numberwithin{lem}{section} 
    \newtheorem{rem}{Remark}[section]
    \numberwithin{rem}{section} 
    \numberwithin{cor}{section} 
    \numberwithin{thm}{section} 
    \newtheorem{defn}{Definition}
    \numberwithin{defn}{section}
    \newtheorem{fact}{Fact}
    \numberwithin{fact}{section}
    \theoremstyle{plain}
    \newtheorem*{thm*}{Theorem}
    \newcommand{\beq}{\begin{equation}}
    \newcommand{\eeq}{\end{equation}}
    \newcommand{\beqnn}{\begin{equation*}}
    \newcommand{\eeqnn}{\end{equation*}}
    \def\bb {\boldsymbol{b}}
    \def\bu {\boldsymbol{u}}
    \def\bs {\boldsymbol{s}}
    \def\bv {\boldsymbol{v}}
    \def\bw {\boldsymbol{w}}
    \def\bx {\boldsymbol{x}}
    \def\by {\boldsymbol{y}}
    \def\bz {\boldsymbol{z}}
    \def\phix {\phi_{\xcal}}
    \def\phiys {\phi_{\ycal^*}}
    \def\bxb {\bar{\boldsymbol{x}}}
    \def\bxh {\hat{\boldsymbol{x}}}
    \def\bxt {\tilde{\boldsymbol{x}}}
    \def\bxs {\boldsymbol{x^*}}
    \def\bys {\boldsymbol{y^*}}
    \def\bysb {\boldsymbol{\bar{y}}^{*}}
    \def\bysh {\boldsymbol{\hat{y}}^{*}}
    \def\byst {\boldsymbol{\tilde{y}}^{*}}
    \def\bX {\boldsymbol{X}}
    \def\bY {\boldsymbol{Y}}
    \def\bYs {\boldsymbol{Y^*}}
    \def\R {\mathbb{R}}
    \def\bxi {\boldsymbol{\xi}}
    \def\vecmax {\mathrm{vecmax}}
    \def\xcal {\mathcal{X}}
    \def\xcals {\mathcal{X}^{*}}
    \def\ycal {\mathcal{Y}}
    \def\ycals {\mathcal{Y}^*}
    \newcommand{\norm}[1]{\left\Vert{#1} \right\Vert}
    \newcommand{\normx}[1]{\left\Vert{#1} \right\Vert_{\mathcal{X}}}
    \newcommand{\normy}[1]{\left\Vert{#1} \right\Vert_{\mathcal{Y}}}
    \newcommand{\normxs}[1]{\left\Vert{#1} \right\Vert_{\mathcal{X}^*}}
    \newcommand{\normys}[1]{\left\Vert{#1} \right\Vert_{\mathcal{Y}^*}}
    \newcommand{\normop}[1]{\left\Vert{#1} \right\Vert_{\mathrm{op}}}
    \newcommand{\matr}[1]{\bm{#1}}
    \def\dom {\mathrm{dom~}}    % Domain
    \def\interior {\mathrm{int}}  % Interior
    \DeclareMathAlphabet{\mymathbb}{U}{BOONDOX-ds}{m}{n}
    \def\Rn {\R^n}
    \def\Rm {\R^m}
    \def\Rd {\R^d}
    \def\gmxcal {\Gamma_0(\xcal)}
    \def\gmycal {\Gamma_0(\ycal)}
    \def\gmycals {\Gamma_0(\ycal^*)}
    \def\gmxcals {\Gamma_0(\xcal^*)}
    \DeclareMathOperator*{\argmin}{arg\,min}
    \DeclareMathOperator*{\argmax}{arg\,max}
    \newcommand{\normsq}[1]{\left\Vert{#1} \right\Vert_{2}^{2}}
    \newcommand{\normtwo}[1]{\left\Vert{#1} \right\Vert_{2}}
    \newcommand{\normone}[1]{\left\Vert{#1} \right\Vert_{1}}
    \newcommand{\norminf}[1]{\left\Vert{#1} \right\Vert_{\infty}}
\begin{document}

\title{Accelerated nonlinear primal-dual hybrid gradient methods with applications to supervised machine learning}

\author{\name J\'er\^ome Darbon \email jerome\_darbon@brown.edu \\
        \addr Division of Applied Mathematics\\
              Brown University\\
              Providence, RI 02912, USA
              \AND
              \name Gabriel P.\ Langlois\thanks{Corresponding author.} \email gabriel\_provencher\_langlois@brown.edu \\
              \addr Division of Applied Mathematics\\
              Brown University\\
              Providence, RI 02912, USA}

\editor{ }

\maketitle

\begin{abstract}
The linear primal-dual hybrid gradient (PDHG) method is a first-order method that splits convex optimization problems with saddle-point structure into smaller subproblems. Unlike those obtained in most splitting methods, these subproblems can generally be solved efficiently because they involve simple operations such as matrix-vector multiplications or proximal mappings that are fast to evaluate numerically. This advantage comes at the price that the linear PDHG method requires precise stepsize parameters for the problem at hand to achieve an optimal convergence rate. Unfortunately, these stepsize parameters are often prohibitively expensive to compute for large-scale optimization problems, such as those in machine learning. This issue makes the otherwise simple linear PDHG method unsuitable for such problems, and it is also shared by most first-order optimization methods as well. To address this issue, we introduce accelerated nonlinear PDHG methods that achieve an optimal convergence rate with stepsize parameters that are simple and efficient to compute. We prove rigorous convergence results, including results for strongly convex or smooth problems posed on infinite-dimensional reflexive Banach spaces. We illustrate the efficiency of our methods on $\ell_{1}$-constrained logistic regression and entropy-regularized matrix games. Our numerical experiments show that the nonlinear PDHG methods are considerably faster than competing methods.
\end{abstract}
\begin{keywords}
 Convex optimization, primal-dual hybrid gradient splitting methods, Bregman divergences, logistic regression, matrix games.
\end{keywords}

% Check references of Lieven, Oliver Fercoq... check also the recent paper by Chambolle...

%%% Notes and comments %%%
% Submit to the Journal of Machine Learning.

%%%%%%%%%%%%%%%%%%%%%%%%%%%%%%%%%%%%%%%%%%%%%%
%%% FOR THE THESIS %%%
% Include problems about constrained lasso, MEB problem, SVM with hinge loss...?

% Thesis: Include SVR and Bregman method, if I have time. Include short paragraph on general form of ML problem + separability... standard problem... where $h$ is separable in a suitable sense. $h$ can be a logistic term, a least square, or hinge, for example. The function g is a penalty function.
% Include AdaBoost problem.

% DATASETS IN MATLAB: https://www.csie.ntu.edu.tw/~cjlin/libsvmtools/datasets/binary.html#news20.binary

%%%%%%%%%%%%%%%%%%%%%%

%%%%%%%%%%%%%%%%%%%%%%%%%%%%%%%%%%%%%%%%%%%%%%%%%%%%%%%%%%%%%%%%%%%%%%%%%%%%%%%%%%

\section{Introduction} \label{sec:intro}
\subsection*{Overview}
The linear primal-dual hybrid gradient (PDHG) method is a first-order splitting method for minimizing the sum of two convex functions~\cite{chambolle2011first,chambolle2016ergodic,esser2010general,pock2011diagonal,pock2009algorithm,zhu2008efficient}. It works by splitting the sum into smaller subproblems, each of which is easier to solve. These subproblems, unlike those obtained from most splitting methods, can generally be solved efficiently because they involve simple operations such as matrix-vector multiplications or proximal mappings that are fast to evaluate numerically. This makes the linear PDHG method flexible and easy to implement for solving a wide range of constrained and nondifferentiable optimization problems. Due to this advantage, the linear PDHG method is widely used for solving problems in imaging science~\cite{benning2016explorations,bredies2015tgv,estellers2015adaptive,gilboa2016nonlinear,knoll2016joint,kongskov2019directional,rigie2015joint}, optimal control~\cite{faessler2016autonomous,kirchner2018primal}, compressive sensing~\cite{foucart2013invitation,hou2019metal}, distributed optimization~\cite{pesquet2014class,scaman2018optimal,scaman2019optimal}, and optimal transport~\cite{carrillo2021primal,dvurechensky2018computational,ferradans2014regularized,gangbo2019unnormalized,liu2021multilevel,papadakis2014optimal}. It is also used, to a limited extent, for solving large-scale problems in machine learning~\cite{arridge2019solving,barlaud2021classification,cevher2014convex,hien2021algorithms,polson2015proximal,schaeffer2017learning,yanez2017primal}.

Despite its flexibility and ease of implementation, the linear PDHG method requires precise stepsize parameters for the problem at hand to to achieve an optimal convergence rate. Unfortunately, these stepsize parameters are often prohibitively expensive to compute for large-scale optimization problems. This issue makes the otherwise simple linear PDHG method unsuitable for solving large-scale optimization problems, such as those in machine learning. This issue is shared by most first-order optimization methods as well.

To illustrate this point, consider the $\ell_{1}$-constrained logistic regression problem
\begin{equation}\label{eq:intro_l1-logreg}
\inf_{\substack{\bv \in \Rd \\ \normone{\bv} \leqslant \lambda}} \frac{1}{m}\sum_{i=1}^{m} \log\left(1 + e^{-[\bb]_{i}\left\langle [\bu]_{i},\bv\right\rangle}\right),
\end{equation}
where $\{[\bu]_{i},[\bb]_{i}\}_{i=1}^{m}$ denote a collection of $m$ feature vectors $[\bu]_{i} \in \Rd$ with labels $[\bb]_{i} \in \{-1,+1\}$ and $\lambda > 0$ is a parameter. This problem can be solved using the linear PDHG method as follows. Let $\matr{B}$ denote the $m \times d$ matrix whose rows are the elements $-[\bb]_{i}[\bu]_{i}$, let $\matr{B}^{*}$ denote its matrix transpose, and let $\norm{\matr{B}}_{2,2}$ denote the largest singular value of $\matr{B}$. Formally, the linear PDHG method computes a global minimum of problem~\eqref{eq:intro_l1-logreg} via the iterations~\cite[Algorithm 5]{chambolle2016ergodic}\cite{moreau1965proximite}
\begin{equation}\label{eq:intro_l1-logreg_alg}
    \begin{alignedat}{1}
    \bz_{k} &= \bw_{k} + \sigma_{k}\matr{B}(\bv_{k} + \theta_{k}[\bv_{k}-\bv_{k-1}]), \\
    \bw_{k+1} &= \bz_{k} - \argmin_{\bw \in \Rm} \left\{\frac{1}{2}\normsq{\bw - \bz_{k}} + \frac{\sigma_{k}}{m}\sum_{i=1}^{m}\log\left(1 + e^{[\bw]_{i}/\sigma_{k}}\right)\right\}, \\
    \bv_{k+1} &= \argmin_{\substack{\bv \in \Rn \\ \normone{\bv} \leqslant \lambda}} \frac{1}{2}\normsq{\bv - \left(\bv_{k} - \tau_{k}\matr{B}\bw_{k+1}\right)}, \\
    \theta_{k+1} &= 1/\sqrt{1+4m\sigma_{k}}, \quad \tau_{k+1} = \tau_{k}/\theta_{k+1} \quad \text{and} \quad \sigma_{k+1} = \theta_{k+1}\sigma_{k},
    \end{alignedat}
\end{equation}
where $\bv_{-1} = \bv_{0}$ are vectors in the interior of the $d$-dimensional $\ell_{1}$-ball of radius $\lambda$, $\bw_{0}$ is a vector in $\Rm$, and $\tau_{0} > 0$, $\sigma_{0} = 1/(\norm{\matr{B}}_{2,2}^{2}\tau_{0})$ and $\theta_{0} = 0$ are the initial stepsize parameters. The updates for $\bw_{k+1}$ and $\bv_{k+1}$ in~\eqref{eq:intro_l1-logreg_alg} can be evaluated efficiently using standard first or second-order optimization methods and efficient $\ell_{1}$-ball projection algorithms~\cite{condat2016fast}, respectively. The other operations in the updates can all be computed exactly in at most $O(md)$ operations. The convergence rate for this method is $O(1/k^{2})$ in the number of iterations $k$, which is the best possible achievable rate of convergence for this problem in the Nesterov class of optimal first-order methods~\cite{Nesterov2018}.

Attaining this optimal rate of convergence requires a precise estimate of the largest singular value $\norm{\matr{B}}_{2,2}$ of the matrix $\matr{B}$. However, this quantity takes on the order of $O(\min(m^2d,md^2))$ operations to compute~\cite{hastie2009elements}. This computational cost makes it essentially impossible to estimate the largest singular value for large matrices. Line search methods and other heuristics are often used to bypass this issue, but they typically slow down the convergence. Most first-order optimization methods used for solving large-scale optimization problems share this issue as well.

To address this issue, we present novel accelerated nonlinear PDHG methods that can achieve an optimal rate of convergence with stepsize parameters that are simple and efficient to compute. Returning to the previous example, let $\norm{\matr{B}}_{1,2}$ denote the maximum $\ell_{2}$ norm of a column of the matrix $\matr{B}$ and define new parameters $\hat{\tau}_{0} > 0$, $\hat{\sigma}_{0} = 1/(\norm{\matr{B}}_{1,2}^{2}\hat{\tau}_{0})$ and $\hat{\theta} = 0$. In addition, let $\bx_{-1} = \bx_{0}$ denote vectors contained in the interior of the $2d$-dimensional unit simplex $\Delta_{2d}$, let $\by_{0}^{*}$ denote a vector in the $m$-dimensional cube $(0,1/m)^{m}$, let $[\hat{\bw}_{0}^{*}]_{i} = \log\left(m[\by_{0}^{*}]_{i}/(1-m[\by^{*}_{0}]_{i})\right)$ for $i \in \{1,\dots,m\}$, and let $(\matr{A} | -\matr{A})$ denote the horizontal concatenation of the matrices $\matr{A}$ and $-\matr{A}$. Then, we show in Sections~\ref{subsec:accII_var} and~\ref{subsec:l1-logreg} that the accelerated nonlinear PDHG method
\begin{equation}\label{eq:intro_l1-logreg_alg_nl}
    \begin{alignedat}{1}
    \hat{\bw}_{k+1} &= \left(4m\hat{\sigma}_{k}\bx_{k} + 4m\hat{\sigma}_{k}\hat{\theta}\left(\bx_{k} - \bx_{k-1}\right) + \hat{\bw}_{k}\right)/(1+4m\hat{\sigma}_{k}),\\
    [\by_{k+1}^{*}]_{i} &= \frac{1}{m + me^{-[\bw_{k+1}]_{i}}} \quad \text{for}\;i\in\left\{1,\dots,m\right\},\\
    [\bx_{k+1}]_{j} &= \frac{[\bx_{k}]_{j}e^{-\hat{\tau}_{k}[\matr{A}^{*}\by_{k+1}^{*}]_{j}}}{\sum_{j=1}^{m}[\bx_{k}]_{j}e^{-\hat{\tau}_{k}[\matr{A}^{*}\by_{k+1}^{*}]_{j}}} \quad \text{for}\;j\in\left\{1,\dots,n\right\},\\
    \bv_{k+1} &= \lambda(\matr{A} \mid -\matr{A})\bx_{k+1}\\
    \hat{\theta}_{k+1} &= 1/\sqrt{1 + 4m\hat{\sigma}_{k}}, \quad \hat{\tau}_{k+1} = \hat{\tau}_{k}/\hat{\theta}_{k+1}, \quad \text{and} \quad \hat{\sigma}_{k+1} = \hat{\theta}_{k+1}\hat{\sigma}_{k},
    \end{alignedat}
\end{equation}
computes a global minimum of problem~\eqref{eq:intro_l1-logreg} through the iterates $\bv_{k}$. Moreover, the convergence rate is $O(1/k^2)$ in the number of iterations $k$, which is the best possible achievable rate of convergence for this problem in the Nesterov class of optimal first-order methods~\cite{Nesterov2018}. 

Unlike in the linear PDHG method~\eqref{eq:intro_l1-logreg_alg}, the stepsize parameters in the nonlinear PDHG method~\eqref{eq:intro_l1-logreg_alg_nl} are computed in optimal $\Theta(md)$ operations from the matrix norm $\norm{\matr{A}}_{1,2}$. In addition, the computational bottleneck in the iterates consists of matrix-vector multiplications that can be computed in $O(md)$ operations or better with appropriate parallel algorithms. Thus all stepsize parameters and updates in the nonlinear method~\eqref{eq:intro_l1-logreg_alg_nl} are computed in quadratic $O(md)$ time, in contrast to the stepsize parameters in the linear method~\eqref{eq:intro_l1-logreg_alg} which are computed in cubic $O(\min(m^2d,md^2))$ time. Numerical experiments in Section~\eqref{sec:numerics} show that the nonlinear PDHG method~\eqref{eq:intro_l1-logreg_alg_nl} converges 5 to 10 times faster than the linear PDHG method~\eqref{eq:intro_l1-logreg_alg}.

\subsection*{Related work}
The linear PDHG method was introduced at around the same time by~\citet{pock2009algorithm} and~\citet{esser2010general} to solve problems in imaging science (see also earlier work from \cite{popov1980modification,zhu2008efficient}). The convergence of the linear PDHG method for problems posed on Euclidean spaces was later proven by~\citet{chambolle2011first}. In addition to a proof of convergence, their work provided accelerated schemes of the linear PDHG method for problems with some degree of smoothness or strong convexity or both.

Since then, many variants and extensions of the linear PDHG method have been proposed; see~\cite{chambolle2016introduction,chambolle2016ergodic,chambolle2018stochastic} for further details and references. A partial list of these variants include: overrelaxed~\cite{condat2013primal,he2012convergence}, inertial~\cite{lorenz2015inertial}, operator, forward-backward, and proximal-gradient splitting~\cite{boct2015convergence,combettes2014forward,davis2017three,drori2015simple,vu2013splitting}, multistep~\cite{chen2014optimal}, stochastic~\cite{balamurugan2016stochastic,chambolle2018stochastic,fercoq2019coordinate,pesquet2014class,valkonen2016block,wen2016randomized,yanez2017primal}, and nonlinear~\cite{chambolle2016ergodic,hohage2014generalization} variants, including the mirror descent method~\cite{nemirovski2004prox}. Here, we focus on nonlinear PDHG methods.

The extension of the linear PDHG method to the nonlinear setting was first done, to our knowledge, by~\citet{hohage2014generalization} to solve non-smooth convex optimization problems posed on Banach spaces. A nonlinear PDHG method for solving such problems using nonlinear proximity operators based on Bregman divergences was later proposed by \citet{chambolle2016ergodic}. Their work also provided an accelerated and partially nonlinear scheme for solving strongly convex problems. Their scheme is not fully nonlinear, however, as it requires one of the Bregman divergence to be a quadratic function. Moreover, their work did not provide accelerated nonlinear schemes for smooth convex problems or smooth and strongly convex problems.

\subsection*{Our contributions} 
This paper contributes accelerated nonlinear PDHG methods that achieve an optimal rate of convergence in the Nesterov class of optimal first-order methods with stepsize parameters that are simple and efficient to compute. To do so, we extend the theory of accelerated nonlinear PDHG methods initiated in~\cite{chambolle2016ergodic} to solve optimization problems on Banach spaces with nonlinear proximity operators based on Bregman divergences. The main theoretical results and accelerated nonlinear PDHG methods are described in Section~\ref{sec:acc_pd_algs}. We prove rigorous convergence results, including results strongly convex or smooth problems posed on infinite-dimensional reflexive Banach spaces. In addition, we provide in Section~\ref{sec:applications} practical implementations of accelerated nonlinear PDHG methods for $\ell_{1}$-constrained logistic regression and zero-sum matrix games with entropy regularization, and we perform numerical experiments on these problems in Section~\ref{sec:numerics} to compare the running times of nonlinear PDHG methods to other commonly-used first-order optimization methods. Our numerical experiments show that the nonlinear PDHG methods are considerably faster than competing methods.

The results we present are generally applicable to convex-concave saddle-point optimization problems posed on real reflexive Banach spaces. Before proceeding toward the technical setup considered in the next section, we describe some key symbols and the notation used in the remainder of this paper in Table 1. For a list of concepts and facts from real, convex and functional analysis that are used in this paper, see Appendix~\ref{app:def}.

\begin{table}[ht]
\caption{Notation}
    \centering
    \begin{tabular}{l|l}
    \hline
        Notation & Meaning \\
        \hline
        $\xcal$  & Real reflexive Banach space endowed with norm $\normx{\cdot}$\\
        $\xcals$ & Dual space of all continuous linear functionals defined on $\xcal$  \\ 
        $\bx \mapsto \left\langle \bx^{*},\bx \right\rangle$ & Value of the functional $\bx^{*}$ at $\bx$\\ 
        $\normxs{\cdot}$ & Norm over the dual space $\xcals$: $\normxs{\bx^{*}} = \sup_{\normx{\bx} = 1} \left\langle \bx^{*},\bx \right\rangle$   \\ 
        $\matr{A} \colon \xcal \to \ycal$ & Bounded linear operator between two reflexive Banach spaces $\xcal$ and $\ycal$ \\ 
        $\matr{A}^{*} \colon \ycals \to \xcals$ & Adjoint operator of $\matr{A}$  \\ 
        $\normop{\matr{A}}$ & Operator norm of $\matr{A}$: $\normop{\matr{A}} = \sup_{\normx{\bx} = 1} \normy{\matr{A}\bx} = \sup_{\normys{\by^*} = 1}\normxs{\matr{A}^*\by^*}$ \\ 
        $\norm{\matr{A}}_{2,2}$ & Largest singular value of an $m \times n$ real matrix $\matr{A}$ \\
        $\norm{\matr{A}}_{1,2}$ & Maximum $\ell_{2}$ norm of a column of an $m \times n$ real matrix $\matr{A}$\\
        $\norm{\matr{A}}_{1,\infty}$ & Maximum $\ell_{\infty}$ norm of a column of an $m \times n$ real matrix $\matr{A}$\\
        $(\matr{A} \mid \matr{B})$ & Horizontal concatenation of two $m \times n$ matrices $\matr{A}$ and $\matr{B}$\\
        $\interior{~C}$ & Interior of a non-empty subset $C$  \\
        $\dom g$ & Domain of a function $g$ \\
        $\gmxcal$ & Set of proper, convex and lower semicontinuous functions defined on $\xcal$ \\ 
        $\partial g(\bx)$ & Subdifferential of a function $g \in \gmxcal$ at $\bx \in \xcal$ \\
        $g^{*}$& Convex conjugate of a function $g$ \\
        $\matr{I}_{n \times n}$ & $n \times n$ identity matrix\\
        $\Delta_{n}$& Unit simplex over $\Rn$: $\Delta_{n} = \{\bx \in \Rn : \sum_{j=1}^{n} [\bx]_{j} = 1\}$ \\
        $\mathcal{H}_{n}(\bx)$ & Negative entropy of $\bx \in \Delta_{n}$: $\mathcal{H}_{n}(\bx) = \sum_{j=1}^{n}[\bx]_{j}\log\left([\bx]_{j}\right)$\\
        $\vecmax{(\bx)}$ & Maximum component of the vector $\bx \in \Rn$: $\vecmax{(\bx)} = \max{([\bx]_{1},\dots,[\bx]_{n})}$\\\hline
    \end{tabular}    \label{tab:notation}
\end{table}
\section{Setup}\label{sec:setup}
We are interested here with convex-concave saddle-point problems posed on real reflexive Banach spaces. Concretely, let $\xcal$ and $\ycal$ denote two real reflexive Banach spaces endowed with norms $\normx{\cdot}$ and $\normy{\cdot}$, and let $\matr{A}\colon\xcal\to\ycal$ denote a bounded linear operator between those two spaces. We consider the following convex-concave saddle-point problem
\begin{equation}\label{eq:saddle_prob}
  \inf_{\bx \in \xcal} \sup_{\by \in \ycal^*} \left\{g(\bx) + \left\langle \by^*,\matr{A} \bx \right\rangle - h^*(\by^{*}) \right\}
\end{equation}
where $g \in \gmxcal$ and $h \in \gmycal$. Formally, this is the primal-dual formulation associated to the primal problem
\begin{equation}\label{eq:primal_prob}
    \inf_{\bx \in \xcal} \{g(\bx) + h(\matr{A}\bx)\}
\end{equation}
and the dual problem
\begin{equation}\label{eq:dual_prob}
    \sup_{\bys \in \ycals} \{-g^{*}(-\matr{A}^* \bys) - h^*(\bys)\}.
\end{equation}
The objective function $\mathcal{L}\colon \xcal \times \ycal^{*} \to \R\cup\{+\infty\}$ in the saddle-point problem~\eqref{eq:saddle_prob}, namely
\begin{equation}\label{eq:lagrangian}
\mathcal{L}(\bx,\by^*) = g(\bx) + \left\langle \by^*,\matr{A} \bx \right\rangle - h^*(\by^{*}),
\end{equation}
is called the Lagrangian of the primal and dual problems~\eqref{eq:primal_prob} and~\eqref{eq:dual_prob}. Solutions to the saddle-point problem~\eqref{eq:saddle_prob}, when they exist, are saddle points of the Lagrangian~\eqref{eq:lagrangian} (see Definition~\eqref{def:saddle} and Fact~\eqref{fact:primal_dual_prob}).

This work focuses on accelerated nonlinear PDHG methods designed to compute saddle points of~\eqref{eq:saddle_prob}, and therefore solutions to the primal and dual problems~\eqref{eq:primal_prob} and~\eqref{eq:dual_prob}. We describe below the formalism behind the nonlinear PDHG method. Let $\phix \in \gmxcal$ and $\phiys \in \gmycals$ denote two essentially smooth and essentially strictly convex functions, and consider their corresponding Bregman divergences:
\[
\begin{alignedat}{1}
&D_{\phix}(\bx,\bxb) = \phix(\bx) - \phi(\bxb) - \left\langle \nabla \phix(\bxb),\bx-\bxb \right\rangle \\
&D_{\phiys}(\bys,\bysb) = \phiys(\bys) - \phiys(\bysb) - \left\langle \bys-\bysb, \nabla \phiys(\bysb) \right\rangle.
\end{alignedat}
\]
Formally, we propose using these Bregman divergence to alternate in~\eqref{eq:saddle_prob} a nonlinear proximal descent step in the variable $\bx$ and a nonlinear proximal ascent step in the variable $\by^{*}$ as follows:
\begin{equation}\label{eq:iteration_PG}
\begin{dcases}
    \bxh &= \argmin_{\bx \in \xcal} \left\{g(\bx) + \left\langle \byst, \matr{A}\bx\right\rangle + \frac{1}{\tau}D_{\phix}(\bx,\bxb)\right\} \\
    \bysh &= \argmax_{\bys \in \ycal^*} \left\{-h^{*}(\bys) + \left\langle \bys, \matr{A}\bxt\right\rangle - \frac{1}{\sigma}D_{\phiys}(\bys,\bysb)\right\}.
\end{dcases}
\end{equation}
The iteration scheme~\eqref{eq:iteration_PG} takes the stepsize parameters $\tau, \sigma > 0$, initial points $(\bxb,\bysb) \in \xcal \times \ycal^*$, and intermediate points $(\bxt,\byst) \in \xcal \times \ycal^*$ to output the new points $(\bxh,\bysh)$. The nonlinear PDHG method consists of this iteration scheme with appropriate parameter values and initial and intermediate points to attain an optimal convergence rate.

\subsubsection*{Assumptions}
\begin{itemize}
    \item[(A1)] The two functions $g$ and $h$ are proper, lower semicontinuous, and convex over their respective domains $\xcal$ and $\ycal$. Moreover, the primal problem~\eqref{eq:primal_prob} has at least one solution and there exists a point $\bx \in \dom g$ such that $\matr{A}\bx \in \dom h$ and $h$ is continuous at $\matr{A}\bx$.
    
    \item[(A2)] The two functions $\phix$ and $\phiys$ are proper, lower semicontinuous, and convex over their respective domains $\xcal$ and $\ycals$. Moreover, $\phix$ and $\phiys$ are both essentially smooth and essentially strictly convex.

    \item[(A3)] The domains of the two functions $g$ and $\phix$ satisfy the inclusion $\dom\partial g \subseteq \interior{(\dom \phix)}$, and at least one of $g$ and $\phix$ is supercoercive.
    
    \item[(A4)] The domains of the two functions $h^*$ and $\phiys$ satisfy the inclusion $\dom\partial h^* \subseteq \interior{(\dom \phiys)}$, and at least one of $h^*$ and $\phiys$ is supercoercive.
    
    \item[(A5)] The two functions $\phix$ and $\phiys$ are 1-strongly convex with respect to $\normx{\cdot}$ and $\normys{\cdot}$ on their respective domains.
\end{itemize}
Assumptions (A1) ensures that the primal problem~\eqref{eq:primal_prob} and dual problem~\eqref{eq:dual_prob} each has at least one solution~\cite[Theorem 4.1]{ekeland1999convex}, and that the saddle-point problem~\eqref{eq:saddle_prob} has at least one saddle point~\cite[Proposition 3.1]{ekeland1999convex}. Assumptions (A1)-(A4) ensure that the Bregman divergences of $\phix$ and $\phiys$ and the minimization problems in the iteration~\eqref{eq:iteration_PG} satisfy the properties described by Facts~\ref{fact:breg_div_props} and~\ref{fact:breg_prox_prop} in Appendix~\ref{app:def}. Finally, assumption (A5) is used later in Section~\ref{sec:basic_alg} and~\ref{sec:acc_pd_algs} to prove the convergence of the nonlinear PDHG methods. We note that the domain inclusions in (A3) and (A4) are more restrictive than those assumed in~\cite{chambolle2016ergodic} and are necessary for the optimization methods to work (see Fact.~\ref{fact:breg_prox_prop}).

Under assumptions (A1)-(A4) and an appropriate choice of stepsize parameters, initial points, and intermediate points, the iteration scheme~\eqref{eq:iteration_PG} is well-defined and satisfies a descent rule:
\begin{lem}\label{lem:descent_rule}
Assume (A1)-(A4) hold, and assume the iteration scheme~\eqref{eq:iteration_PG} takes as input the stepsize parameters $\tau, \sigma > 0$, initial points $(\bxb, \bysb) \in \dom \partial g \times \dom \partial h^*$, and intermediate points $(\bxt, \byst) \in \xcal \times \ycals$. Then the iteration scheme~\eqref{eq:iteration_PG} generates a unique output $(\bxh,\bysh) \in \dom \partial g \times \dom \partial h^*$, and for every $(\bx,\bys) \in \dom g \times \dom h^*$ the output $(\bxh,\bysh)$ satisfies the descent rule
\begin{equation}\label{eq:descent_rule}
    \begin{alignedat}{1}
    \mathcal{L}(\bxh,\bys) - \mathcal{L}(\bx,\bysh) &\leqslant \frac{1}{\tau}\left(D_{\phix}(\bx,\bxb) - D_{\phix}(\bxh,\bxb) - D_{\phix}(\bx,\bxh)\right)\\
    &\quad + \frac{1}{\sigma}\left(D_{\phiys}(\bys,\bysb) - D_{\phiys}(\bysh,\bysb) - D_{\phiys}(\bys,\bysh) \right) \\
    &\quad + \left\langle \byst - \bysh, \matr{A}(\bx  - \bxt)\right\rangle - \left\langle \bys - \byst, \matr{A}(\bxt - \bxh) \right\rangle.
    \end{alignedat}
\end{equation}
\end{lem}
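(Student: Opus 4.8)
The plan is to derive the descent rule~\eqref{eq:descent_rule} directly from the first-order optimality conditions of the two subproblems in~\eqref{eq:iteration_PG}, combined with the three-point identity for Bregman divergences. First I would establish that the output is well-defined. Under (A1)--(A4), the objective of the $\bxh$-subproblem is the sum of a proper convex lower semicontinuous function and the essentially strictly convex, supercoercive Bregman term $\tfrac{1}{\tau}D_{\phix}(\cdot,\bxb)$; by Facts~\ref{fact:breg_div_props} and~\ref{fact:breg_prox_prop} it admits a unique minimizer lying in $\dom\partial g$, at which $\phix$ is differentiable (by essential smoothness and the domain inclusion in (A3)), and symmetrically $\bysh \in \dom\partial h^*$ is the unique maximizer with $\phiys$ differentiable at $\bysh$. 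This is exactly what justifies writing stationarity through the subdifferential sum rule.

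Second, I would record the optimality conditions. For the primal step there is a subgradient $\xi \in \partial g(\bxh)$ with $\xi + \matr{A}^*\byst + \tfrac{1}{\tau}(\nabla\phix(\bxh)-\nabla\phix(\bxb)) = 0$; testing the subgradient inequality $g(\bx) \geqslant g(\bxh) + \langle \xi, \bx-\bxh\rangle$ at an arbitrary $\bx \in \dom g$ and rearranging gives
\[
g(\bxh) - g(\bx) \leqslant \langle \byst, \matr{A}(\bx-\bxh)\rangle + \tfrac{1}{\tau}\langle \nabla\phix(\bxh)-\nabla\phix(\bxb),\, \bx-\bxh\rangle.
\]
The analogous computation for the dual step (which minimizes $h^*(\cdot) - \langle \cdot,\matr{A}\bxt\rangle + \tfrac{1}{\sigma}D_{\phiys}(\cdot,\bysb)$) yields, for arbitrary $\bys \in \dom h^*$,
\[
h^*(\bysh) - h^*(\bys) \leqslant -\langle \bys-\bysh, \matr{A}\bxt\rangle + \tfrac{1}{\sigma}\langle \bys-\bysh,\, \nabla\phiys(\bysh)-\nabla\phiys(\bysb)\rangle.
\]

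Third, I would invoke the three-point identity $D_\phi(a,c) - D_\phi(a,b) - D_\phi(b,c) = \langle \nabla\phi(b)-\nabla\phi(c),\, a-b\rangle$ (valid for either pairing convention), applied with $(a,b,c)=(\bx,\bxh,\bxb)$ in the primal estimate and with $(a,b,c)=(\bys,\bysh,\bysb)$ in the dual estimate, to replace the gradient pairings by the Bregman triples $D_{\phix}(\bx,\bxb)-D_{\phix}(\bxh,\bxb)-D_{\phix}(\bx,\bxh)$ and $D_{\phiys}(\bys,\bysb)-D_{\phiys}(\bysh,\bysb)-D_{\phiys}(\bys,\bysh)$. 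Adding the two resulting inequalities and using $\mathcal{L}(\bx,\bys)=g(\bx)+\langle \bys,\matr{A}\bx\rangle-h^*(\bys)$ to recognize the left-hand side as $\mathcal{L}(\bxh,\bys)-\mathcal{L}(\bx,\bysh)$ produces~\eqref{eq:descent_rule} once the bilinear terms are collected. The remaining work is the purely algebraic identity
\[
\langle \byst, \matr{A}(\bx-\bxh)\rangle - \langle \bys-\bysh, \matr{A}\bxt\rangle + \langle \bys,\matr{A}\bxh\rangle - \langle \bysh,\matr{A}\bx\rangle = \langle \byst-\bysh, \matr{A}(\bx-\bxt)\rangle - \langle \bys-\byst, \matr{A}(\bxt-\bxh)\rangle,
\]
which follows by expanding both sides into the six monomials $\langle \cdot,\matr{A}\cdot\rangle$ and matching terms (the two $\langle \byst,\matr{A}\bxt\rangle$ contributions cancel).

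I expect the main obstacle to be the first step rather than the final assembly: ensuring in the reflexive Banach-space setting that the Bregman proximal subproblems are uniquely solvable, that their solutions fall into $\dom\partial g$ and $\dom\partial h^*$ so that $\nabla\phix$ and $\nabla\phiys$ are meaningful there, and that the subdifferential sum rule applies so the stationarity conditions are legitimate. These are precisely the properties packaged in Facts~\ref{fact:breg_div_props} and~\ref{fact:breg_prox_prop}; once they are in hand, the descent rule reduces to the three-point identity together with the bilinear rearrangement, both of which are routine.
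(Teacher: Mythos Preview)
Your proposal is correct and follows essentially the same route as the paper. The only cosmetic difference is that the paper applies the pre-packaged inequality~\eqref{eq:char_prox_mapping} from Fact~\ref{fact:breg_prox_prop}(iii), which already bundles the subgradient characterization~\eqref{eq:char_prox_diff} with the three-point identity~\eqref{eq:fact:three_point}, whereas you invoke these two ingredients separately; the bilinear rearrangement at the end is identical.
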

\begin{proof}
See Appendix~\ref{app:A}.
\end{proof}

\section{The basic nonlinear primal-dual hybrid gradient method}\label{sec:basic_alg}

The basic nonlinear PDHG method takes two stepsize parameters $\tau, \sigma > 0$ and an initial pair of points $(\bx_{0},\by_{0}^{*}) \in \dom \partial g \times \dom \partial h^*$ to generate the iterates
\begin{equation}\label{eq:basic_pdhg_alg}
\begin{dcases}
    \bx_{k+1} &= \argmin_{\bx \in \xcal} \left\{g(\bx) + \left\langle \by_{k}^{*}, \matr{A}\bx\right\rangle + \frac{1}{\tau}D_{\phix}(\bx,\bx_{k})\right\} \\
    \by_{k+1}^{*} &= \argmax_{\bys \in \ycal^*} \left\{-h^{*}(\bys) + \left\langle \bys, \matr{A}(2\bx_{k+1} - \bx_{k})\right\rangle - \frac{1}{\sigma}D_{\phiys}(\bys,\by_{k}^{*})\right\}.
\end{dcases}
\end{equation}
Under assumptions (A1)-(A4), Lemma~\ref{lem:descent_rule} applies to method~\eqref{eq:basic_pdhg_alg}, and starting from the input $(\bx_{0},\by_{0}^{*})$ the method~\eqref{eq:basic_pdhg_alg} generates a unique output $(\bx_{1},\by_{1}^{*}) \in \dom\partial g \times \dom\partial h^*$. A simple induction argument using Lemma~\ref{lem:descent_rule} then shows that $(\bx_{k},\by_{k}^{*}) \in \dom \partial g \times \dom \partial h^*$ for every $k \in \mathbb{N}$. As such, method~\eqref{eq:basic_pdhg_alg} is well-defined. In addition, under assumption (A5) and appropriate conditions on the values of the stepsize parameters $\tau$ and $\sigma$, the nonlinear PDHG method~\eqref{eq:basic_pdhg_alg} satisfies the following properties:
\begin{prop}\label{prop:basic_algI}
Assume (A1)-(A5) hold and assume $\tau,\sigma > 0$ satisfy the strict inequality
\begin{equation}\label{eq:suff_param_ineq}
    \tau\sigma\normop{\matr{A}}^2 < 1.
\end{equation}
Let $(\bx_{0},\by_{0}^{*})$ be a pair of points contained in $\dom \partial g \times \dom \partial h^*$, let $(\bx_s,\by_{s}^{*})$ be a saddle point of the Lagrangian~\eqref{eq:lagrangian}, and let $K \in \mathbb{N}$. Consider the sequence of iterates $\{(\bx_{k},\by_{k}^{*})\}_{k=1}^{K}$ generated by the nonlinear PDHG method~\eqref{eq:basic_pdhg_alg} from the initial points $(\bx_{0},\by_{0}^{*})$, define the averages
\[
\bX_{K} = \frac{1}{K}\sum_{k=1}^{K} \bx_{k} \quad \mathrm{and} \quad \bY_{K}^{*} = \frac{1}{K}\sum_{k=1}^{K} \by_{k}^{*},
\]
and for $(\bx,\by^{*}) \in \dom g \times \dom h^*$, define the quantity
\begin{equation}\label{eq:basic_d_symbol}
\Delta_{k}(\bx,\bys) = \frac{1}{\tau}D_{\phix}(\bx,\bx_{k}) + \frac{1}{\sigma}D_{\phiys}(\bys,\by_{k}^{*}) - \left\langle \bys - \by_{k}^{*}, \matr{A}(\bx  - \bx_{k})\right\rangle.
\end{equation}
Then:
\begin{enumerate}
    \item[(a)] For every $(\bx,\bys) \in \dom g \times \dom h^*$ and nonnegative integer $k$, the output $(\bx_{k+1},\by_{k+1}^{*})$ of the nonlinear PDHG method~\eqref{eq:basic_pdhg_alg} satisfies the descent rule
    \begin{equation}\label{eq:descent_rule_s_I}
    \mathcal{L}(\bx_{k+1},\bys) - \mathcal{L}(\bx,\by_{k+1}^{*}) \leqslant \Delta_{k}(\bx,\bys) - \Delta_{k+1}(\bx,\bys).
    \end{equation}
    
    \item[(b)]  For every $(\bx,\bys) \in \dom g \times \dom h^*$ and $K \in \mathbb{N}$, we have the estimate
    \begin{equation}\label{eq:basic_rateI}
    \begin{alignedat}{1}
    \mathcal{L}(\bX_{K}, \bys) - \mathcal{L}(\bx,\bY_{K}^{*}) &\leqslant \frac{1+\sqrt{\tau\sigma}\normop{\matr{A}}}{K}\left(\frac{1}{\tau}D_{\phix}(\bx,\bx_{0}) + \frac{1}{\sigma}D_{\phiys}(\bys,\by_{0}^{*})\right) \\
    &\qquad - \frac{1-\sqrt{\tau\sigma}\normop{\matr{A}}}{K}\left(\frac{1}{\tau}D_{\phix}(\bx,\bx_{K}) + \frac{1}{\sigma}D_{\phiys}(\bys,\by_{K}^{*})\right),
    \end{alignedat}
    \end{equation}
    and for $(\bx,\by^{*}) = (\bx_{s},\by_{s}^{*})$, the global bound
    \begin{equation}\label{eq:bound_seq}
    \frac{1}{\tau}D_{\phix}(\bx_{s},\bx_{K}) + \frac{1}{\sigma}D_{\phiys}(\by_{s}^{*},\by_{K}^{*}) \leqslant \frac{1+\sqrt{\tau\sigma}\normop{\matr{A}}}{1-\sqrt{\tau\sigma}\normop{\matr{A}}}\left(\frac{1}{\tau}D_{\phix}(\bx_{s},\bx_{0}) + \frac{1}{\sigma}D_{\phiys}(\by_{s}^{*},\by_{0}^{*})\right).
    \end{equation}
    
    \item[(c)][Convergence properties]\label{prop:basic_algII} The sequences $\{(\bx_{k},\by_{k}^{*})\}_{k=1}^{+\infty}$ and $\{(\bX_{K},\bY_{K}^{*})\}_{K=1}^{+\infty}$ are bounded, and the latter has a subsequence that converges weakly to a saddle point of the Lagrangian~\eqref{eq:lagrangian}. If, in addition, the spaces $\xcal$ and $\ycals$ are finite-dimensional, then the sequences $\{(\bx_{k},\by_{k}^{*})\}_{k=1}^{+\infty}$ and $\{(\bX_{K},\bY_{K}^{*})\}_{K=1}^{+\infty}$ both converge strongly to the same saddle point.
\end{enumerate}
\end{prop}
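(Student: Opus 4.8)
The plan is to handle the three parts in sequence, since part (a) supplies the engine that drives (b) and (c).

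For part (a), I would recognize the basic method~\eqref{eq:basic_pdhg_alg} as one application of the generic scheme~\eqref{eq:iteration_PG} under the identifications $\bxb = \bx_{k}$, $\bysb = \by_{k}^{*}$, $\byst = \by_{k}^{*}$, $\bxt = 2\bx_{k+1}-\bx_{k}$, and output $(\bxh,\bysh) = (\bx_{k+1},\by_{k+1}^{*})$. Substituting these into the descent rule of Lemma~\ref{lem:descent_rule} and expanding the two pairing terms, I would collect the telescoping Bregman pieces into $\Delta_{k}-\Delta_{k+1}$ and show by direct algebraic simplification that the leftover cross terms collapse to the single pairing $\left\langle \by_{k+1}^{*}-\by_{k}^{*}, \matr{A}(\bx_{k+1}-\bx_{k})\right\rangle$, leaving a residual $-\tfrac{1}{\tau}D_{\phix}(\bx_{k+1},\bx_{k}) - \tfrac{1}{\sigma}D_{\phiys}(\by_{k+1}^{*},\by_{k}^{*}) + \left\langle \by_{k+1}^{*}-\by_{k}^{*}, \matr{A}(\bx_{k+1}-\bx_{k})\right\rangle$. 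The crux is to bound this residual by $0$: the $1$-strong convexity in (A5) gives $D_{\phix}(\bx_{k+1},\bx_{k})\geq \tfrac12\normx{\bx_{k+1}-\bx_{k}}^{2}$ and its analogue for $\phiys$, after which the operator-norm estimate together with Young's inequality weighted by $\sqrt{\tau/\sigma}$ bounds the residual by $-\tfrac{1-\sqrt{\tau\sigma}\normop{\matr{A}}}{2\tau}\normx{\bx_{k+1}-\bx_{k}}^{2} - \tfrac{1-\sqrt{\tau\sigma}\normop{\matr{A}}}{2\sigma}\normys{\by_{k+1}^{*}-\by_{k}^{*}}^{2}$, which is nonpositive exactly because~\eqref{eq:suff_param_ineq} holds. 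This yields~\eqref{eq:descent_rule_s_I}.

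For part (b), I would sum~\eqref{eq:descent_rule_s_I} over $k=0,\dots,K-1$; the right side telescopes to $\Delta_{0}(\bx,\bys)-\Delta_{K}(\bx,\bys)$, while convexity of $\bx\mapsto\mathcal{L}(\bx,\bys)$ and concavity of $\bys\mapsto\mathcal{L}(\bx,\bys)$ give, through Jensen's inequality on the averages $\bX_{K},\bY_{K}^{*}$, the lower bound $K\bigl[\mathcal{L}(\bX_{K},\bys)-\mathcal{L}(\bx,\bY_{K}^{*})\bigr]$ for the left side. To cast $\Delta_{0}-\Delta_{K}$ into the stated form, I would bound the cross term in $\Delta_{0}$ from above and the one in $\Delta_{K}$ from below by the same Young/strong-convexity estimate as in (a), producing the factors $1\pm\sqrt{\tau\sigma}\normop{\matr{A}}$ and hence~\eqref{eq:basic_rateI}. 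The global bound~\eqref{eq:bound_seq} then follows by specializing to $(\bx,\bys)=(\bxsad,\byssad)$, using that the saddle-point gap $\mathcal{L}(\bX_{K},\byssad)-\mathcal{L}(\bxsad,\bY_{K}^{*})$ is nonnegative, and rearranging.

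For part (c), boundedness of $\{(\bx_{k},\by_{k}^{*})\}$ follows from~\eqref{eq:bound_seq} with (A5), and the averages are bounded as Cesàro means; since $\xcal$ and $\ycals$ are reflexive, a bounded sequence has a weakly convergent subsequence $(\bX_{K_{j}},\bY_{K_{j}}^{*})\rightharpoonup(\bxb,\bysb)$. I would identify this limit as a saddle point by letting $K\to\infty$ in~\eqref{eq:basic_rateI} (whose right side tends to $0$ for fixed $(\bx,\bys)$) and using that $\bx\mapsto\mathcal{L}(\bx,\bys)$ is weakly lower semicontinuous and $\bys\mapsto\mathcal{L}(\bx,\bys)$ weakly upper semicontinuous, which yields $\mathcal{L}(\bxb,\bys)\leq\mathcal{L}(\bx,\bysb)$ for all admissible $(\bx,\bys)$; taking the supremum in $\bys$ and the infimum in $\bx$ together with weak duality (Fact~\ref{fact:primal_dual_prob}) forces $\bxb,\bysb$ to be primal/dual optimal, hence a saddle point. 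For the finite-dimensional strengthening I would first observe that summing the refined descent inequality from (a) against $(\bxsad,\byssad)$ gives $\sum_{k}\normx{\bx_{k+1}-\bx_{k}}^{2}<\infty$ and $\sum_{k}\normys{\by_{k+1}^{*}-\by_{k}^{*}}^{2}<\infty$, so successive iterates converge together; extracting a convergent subsequence (weak equals strong) and passing to the limit in the first-order optimality conditions of~\eqref{eq:basic_pdhg_alg}, via continuity of $\nabla\phix,\nabla\phiys$ and closedness of the subdifferential graphs, identifies the limit as a saddle point $(\bx_{\infty},\by_{\infty}^{*})$. Finally, Fejér-type monotonicity (the descent rule shows $\Delta_{k}(\bx_{\infty},\by_{\infty}^{*})$ is nonincreasing and nonnegative, hence convergent, while a subsequence tends to $0$) forces $\Delta_{k}(\bx_{\infty},\by_{\infty}^{*})\to 0$, and strong convexity then gives $\bx_{k}\to\bx_{\infty}$ and $\by_{k}^{*}\to\by_{\infty}^{*}$ strongly, with the averages converging to the same limit by the Cesàro property.

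The main obstacle I anticipate is part (c): the manipulations in (a)–(b) are routine, but promoting a bounded weak cluster point to a genuine saddle point requires the semicontinuity/weak-duality argument, and upgrading subsequential convergence to convergence of the entire sequence in finite dimensions hinges on combining summability of the successive differences with Fejér monotonicity evaluated at the limiting saddle point.
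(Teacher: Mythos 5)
Your proof of parts (a) and (b) is correct and coincides with the paper's own: the same reduction to Lemma~\ref{lem:descent_rule} with $(\bxt,\byst)=(2\bx_{k+1}-\bx_{k},\by_{k}^{*})$, the same residual $-\tfrac{1}{\tau}D_{\phix}(\bx_{k+1},\bx_{k})-\tfrac{1}{\sigma}D_{\phiys}(\by_{k+1}^{*},\by_{k}^{*})+\langle \by_{k+1}^{*}-\by_{k}^{*},\matr{A}(\bx_{k+1}-\bx_{k})\rangle$ absorbed by the Young estimate together with (A5), and the same telescoping, Jensen, and saddle-gap steps (the paper packages the Young estimate as the two-sided bound $(1\mp\sqrt{\tau\sigma}\normop{\matr{A}})$ on $\Delta_{k}$ and reuses it in (b)). In part (c) the only real divergence is how the strong cluster point of the iterates is identified as a saddle point in finite dimensions: you derive $\sum_{k}\normx{\bx_{k+1}-\bx_{k}}^{2}<+\infty$ and its dual analogue from the refined descent rule at $(\bx_{s},\by_{s}^{*})$ and then pass to the limit in the first-order optimality conditions of~\eqref{eq:basic_pdhg_alg}, whereas the paper stays with the descent rule itself, using Fact~\ref{fact:breg_div_props}(vii) to get convergence of $\Delta_{k_l}(\bx,\bys)$ along the strongly convergent subsequence and lower semicontinuity of $g$ and $h^{*}$ to conclude $\mathcal{L}(\bx_{c},\bys)\leqslant\mathcal{L}(\bx,\by_{c}^{*})$ for all admissible $(\bx,\bys)$. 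Your summability observation is a genuine plus: it substantiates the paper's bare assertion that the cluster point is a fixed point of the iteration. The one step you should justify explicitly is the limit passage in the optimality conditions: since $\phix$ and $\phiys$ are only essentially smooth, $\normx{\nabla\phix(\cdot)}$ blows up near the boundary of $\dom\phix$, and $\bx_{k_l+1}-\bx_{k_l}\to 0$ alone does not force $\nabla\phix(\bx_{k_l+1})-\nabla\phix(\bx_{k_l})\to 0$ unless the cluster point lies in $\interior{(\dom\phix)}$ (equivalently, by (A3), in $\dom\partial g$); the paper's route has the same interiority requirement through Fact~\ref{fact:breg_div_props}(vii) but leans on it less heavily. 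Your closing Fej\'er-monotonicity argument for $\Delta_{k}$ at the limiting saddle point and the Ces\`aro step for the averages match the paper exactly.
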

\begin{proof}
See Appendix~\ref{app:B}.
\end{proof}
\section{Accelerated nonlinear primal-dual hybrid gradient methods}\label{sec:acc_pd_algs}

In this section, we describe accelerated nonlinear PDHG method~\eqref{eq:basic_pdhg_alg} that are suitable when the functions $g$ and $h^*$ in the saddle-point problem~\eqref{eq:saddle_prob} have additional structure beyond that stated in assumptions (A1)-(A5). Specifically, we assume either one or both of these statements:
\begin{itemize}
    \item[(A6)] There is a positive number $\gamma_g$ such that the function $\bx \mapsto g(\bx) - \gamma_g \phix(\bx)$ is convex.
    
    \item[(A7)] There is a positive number $\gamma_{h^{*}}$ such that the function $\bys \mapsto h^*(\bys) - \gamma_{h^*}\phiys(\bys)$ is convex.
\end{itemize}
With assumptions (A1)-(A7), the descent rule~\eqref{eq:descent_rule} in Lemma~\ref{lem:descent_rule} is improved: For every $(\bx,\bys) \in \dom g \times \dom h^*$, the output $(\bxh,\bysh)$ satisfies
\begin{equation}\label{eq:descent_rule_sc}
    \begin{alignedat}{1}
    \mathcal{L}(\bxh,\bys) - \mathcal{L}(\bx,\bysh) &\leqslant \frac{1}{\tau}\left(D_{\phix}(\bx,\bxb) - D_{\phix}(\bxh,\bxb)\right) - \left(\frac{1 + \gamma_g\tau}{\tau} \right)D_{\phix}(\bx,\bxh)\\
    &\quad + \frac{1}{\sigma}\left(D_{\phiys}(\bys,\bysb) - D_{\phiys}(\bysh,\bysb) \right) - \left(\frac{1+\gamma_{h^*}\sigma}{\sigma}\right)D_{\phiys}(\bys,\bysh) \\
    &\quad + \left\langle \byst - \bysh, \matr{A}(\bx  - \bxt)\right\rangle - \left\langle \bys - \byst, \matr{A}(\bxt - \bxh) \right\rangle.
    \end{alignedat}
\end{equation}
The proof of inequality~\eqref{eq:descent_rule_sc} is nearly identical to the proof of inequality~\eqref{eq:descent_rule}, with the only difference that we use the stronger inequality~\eqref{eq:char_prox_mapping_sc} (see Fact~\ref{fact:breg_prox_prop}(iii)) on each line of the iteration scheme~\eqref{eq:iteration_PG} to get
\begin{equation*}
g(\bxh) - g(\bx) \leqslant \frac{1}{\tau}\left(D_{\phix}(\bx,\bar{\bx}) - D_{\phix}(\bxh,\bxb)\right) - \left(\frac{1 + \gamma_g\tau}{\tau}\right)D_{\phix}(\bx,\bxh) + \left\langle \matr{A}^*\byst, \bx - \bxh \right\rangle.
\end{equation*}
and
\begin{equation*}
h^*(\bysh) - h^{*}(\bys) \leqslant \frac{1}{\sigma}\left(D_{\phiys}(\bys,\bysb) - D_{\phiys}(\bysh,\bysb))\right) - \left(\frac{1 + \gamma_{h^*}\sigma}{\sigma}\right)D_{\phiys}(\bys,\bysh) - \left\langle \bys -\bysh,\matr{A}\bxt \right\rangle.
\end{equation*}
Inequality~\eqref{eq:descent_rule_sc} then follows from these two inequalities and the same steps used to prove the descent rule~\eqref{eq:descent_rule} in Lemma~\ref{lem:descent_rule}.

\begin{rem}\label{rem:strong-conv}
Assumptions (A1)-(A7) imply that $g$ is $\gamma_g$-strongly convex over $\dom g \cap \dom \phix$ and $h^*$ is $\gamma_{h^{*}}$-strongly convex over $\dom h^* \cap \phiys$. For example,
\[
g(\bx) - \frac{\gamma_g}{2}\normx{\bx}^2 = \left(g(\bx) - \gamma_g\phix(\bx)\right) + \gamma_g\left(\phix(\bx) - \frac{1}{2}\normx{\bx}^2\right)
\]
for every $\bx \in \dom g \cap \dom \phix$, and as the set $\dom g \cap \dom \phix$ is convex and the right hand side is the sum of two convex functions, the left hand side is also convex.
\end{rem}

\begin{rem}\label{rem:uniqueness}
In light of Remark~\ref{rem:strong-conv} and Fact~\ref{fact:primal_dual_prob}, if assumptions (A1) and (A6) hold, then the primal problem~\eqref{eq:primal_prob} has a unique solution. Likewise, if assumptions (A1) and (A7) hold, then the dual problem~\eqref{eq:dual_prob} has a unique solution. Finally, if assumptions (A1) and (A6)-(A7) hold, then the Lagrangian~\eqref{eq:lagrangian} has a unique saddle point.
\end{rem}

The additional terms in~\eqref{eq:descent_rule_sc} allow us to create accelerated methods with better convergence rate than the $O(1/K)$ rate for estimate~\eqref{eq:basic_rateI}. The first accelerated method, which we describe in Section~\ref{subsec:accI}, has a sublinear $O(1/K^2)$ convergence rate and is applicable if assumption (A6) hold. A variant of the first accelerated method, which we describe in Section~\ref{subsec:accII}, has a sublinear $O(1/K^2)$ convergence rate and is applicable if assumption (A7) hold. The second accelerated method, which we describe in Section~\ref{subsec:accIII}, has a linear convergence rate and is applicable if both assumptions (A6) and (A7) hold. We also present another variant of this method in Section~\ref{subsec:accII_var}.

%%%%%%%%%%%%%%%%%%%%%%%%%%%%%%%%%%
\subsection{Accelerated nonlinear PDHG methods for strongly convex problems}\label{subsec:accI}

This accelerated nonlinear PDHG method requires statement (A6) to hold with $\gamma_g > 0$. It takes two parameters $\theta_{0} \in (0,1]$ and $\sigma_{0} > 0$, a set parameter $\tau_{0} = 1/(\normop{\matr{A}}^2\sigma_0)$, an initial point $\bx_{0} \in \dom \partial g$, and the initial points $\by_{-1}^{*} = \by_{0}^{*} \in \dom \partial h^*$ to generate the iterates
\begin{equation}\label{eq:accI_pdhg_alg}
\begin{dcases}
    \bx_{k+1} &= \argmin_{\bx \in \xcal} \left\{g(\bx) + \left\langle \by_{k}^{*} + \theta_{k}(\by_{k}^{*} - \by_{k-1}^{*}), \matr{A}\bx\right\rangle + \frac{1}{\tau_{k}}D_{\phix}(\bx,\bx_{k})\right\} \\
    \by_{k+1}^{*} &= \argmax_{\by^{*} \in \ycal^*} \left\{-h^{*}(\by^{*}) + \left\langle \by^{*}, \matr{A}\bx_{k+1}\right\rangle - \frac{1}{\sigma_{k}}D_{\phiys}(\by^{*},\by_{k}^{*})\right\}, \\
\end{dcases}
\end{equation}
where the parameters $\tau_{k}, \sigma_{k}, \theta_{k}$ for $k \in \mathbb{N}$ satisfy the recurrence relations
\begin{equation}\label{eq:acc_sq_exact_iter}
\theta_{k+1} = \frac{1}{\sqrt{1 + \gamma_g \tau_{k}}}, \quad \tau_{k+1} = \theta_{k+1}\tau_{k}, \quad \mathrm{and} \quad \sigma_{k+1} = \sigma_{k}/\theta_{k+1}.
\end{equation}
Under assumptions (A1)-(A4), Lemma~\ref{lem:descent_rule} applies to method~\eqref{eq:accI_pdhg_alg}, and the method generates points $(\bx_{k},\by_{k}^{*})$ that are contained in $\dom \partial g \times \dom \partial h^*$. As such, method~\eqref{eq:accI_pdhg_alg} is well-defined. If, in addition, assumptions (A5)-(A6) hold, then this method satisfies the following properties:

\begin{prop}\label{prop:acc_sq_algI}
Assume (A1)-(A6) hold. Let $\theta_{0} \in (0,1]$, $\sigma_{0} > 0$ and $\tau_{0} = 1/(\normop{\matr{A}}^2\sigma_{0})$, let $(\bx_{0},\by_{0}^{*}) \in \dom \partial g \times \dom \partial h^*$, let $\by_{-1}^{*} = \by_{0}^{*}$, and let $(\bx_s,\by_{s}^{*})$ denote a saddle point of the Lagrangian~\eqref{eq:lagrangian}. Consider the sequence of iterates
$\{(\bx_{k},\by_{k}^{*})\}_{k=1}^{K}$ with $K \in \mathbb{N}$ generated by the accelerated nonlinear PDHG method~\eqref{eq:accI_pdhg_alg} and the recurrence relations~\eqref{eq:acc_sq_exact_iter} from the initial points $(\bx_0,\by_{0}^{*})$ and $\by_{-1}^{*}$ and initial parameters $\tau_0$, $\sigma_0$ and $\theta_0$. Define the averages
\[
T_{K} = \sum_{k=1}^{K}\frac{\sigma_{k-1}}{\sigma_{0}}, \quad \bX_{K} = \frac{1}{T_{K}}\sum_{k=1}^{K} \frac{\sigma_{k-1}}{\sigma_{0}} \bx_{k} \quad \mathrm{and} \quad \bY_{K}^{*} = \frac{1}{T_{K}}\sum_{k=1}^{K} \frac{\sigma_{k-1}}{\sigma_{0}} \by_{k}^{*}
\]
and for $(\bx,\by^{*}) \in \dom g \times \dom h^*$, the quantity
\begin{equation}\label{eq:app_d_symbol}
    \Delta_{k}(\bx,\by^{*}) = \frac{1}{\tau_{k}}D_{\phix}(\bx,\bx_{k}) + \frac{1}{\sigma_{k}}D_{\phiys}(\bys,\by_{k}^{*}) + \frac{1}{\sigma_{k}}D_{\phiys}(\by_{k}^{*},\by_{k-1}^{*}) + \theta_{k}\left\langle\by_{k}^{*}-\by_{k-1}^{*},\matr{A}(\bx-\bx_{k}) \right\rangle.
\end{equation}
Then:
\begin{enumerate}
    \item[(a)] For every $(\bx,\bys) \in \dom g \times \dom h^*$ and nonnegative integer $k$, the output $(\bx_{k+1},\by_{k+1}^{*})$ of the accelerated nonlinear PDHG method~\eqref{eq:accI_pdhg_alg} satisfies the descent rule
    \begin{equation}\label{eq:descent_rule_s_acc_I}
    \begin{alignedat}{1}
    \mathcal{L}(\bx_{k+1},\bys) - \mathcal{L}(\bx,\by_{k+1}^{*}) &\leqslant \Delta_{k}(\bx,\by^{*}) - \Delta_{k+1}(\bx,\by^{*})/\theta_{k+1}.
    \end{alignedat}    
    \end{equation}
    
    \item[(b)] For every $(\bx,\bys) \in \dom g \times \dom h^*$, we have the estimate
    \begin{equation}\label{eq:acc_sq__rateI}
    T_{K}\left(\mathcal{L}(\bX_{K},\bys) - \mathcal{L}(\bx,\bY_{K}^{*})\right) \leqslant \Delta_{0}(\bx,\by^{*}) - \frac{\sigma_{K}}{\sigma_{0}}\Delta_{K}(\bx,\by^{*})
    \end{equation}
    and, for the choice of the saddle point $(\bx,\by^{*}) = (\bx_{s},\by_{s}^{*})$, the global bound
    \begin{equation}\label{eq:bound_seq_acc_I}
        \frac{\gamma_{g}}{1 + \gamma_{g}\tau_{0}}D_{\phix}(\bx,\bx_{K}) + \frac{1}{\sigma_{K}}D_{\phiys}(\by^{*},\by_{K}^{*}) \leqslant \Delta_{K}(\bx_{s},\by_{s}^{*}) \leqslant \frac{\sigma_0}{\sigma_{K}}\Delta_{0}(\bx_{s},\by_{s}). 
    \end{equation}
    % \begin{equation}\label{eq:bound_seq_acc_I}
    %     \frac{1-\theta_{K}^{2}}{\tau_{K}}D_{\phix}(\bx_{s},\bx_{K}) + \frac{1}{\sigma_{K}}D_{\phiys}(\by_{s}^{*},\by_{K}^{*}) \leqslant \frac{\sigma_0}{\sigma_{K}}\left(\frac{1}{\tau_{0}} D_{\phix}(\bx_{s},\bx_{0}) + \frac{1}{\sigma_{0}}D_{\phiys}(\by_{s}^{*},\by_{0}^{*})\right). 
    % \end{equation}
    
    \item[(c)] The average quantity $T_K$ satisfies the formula
    \begin{equation}\label{eq:acc_exact_t_avg}
    T_{K} = \normop{\matr{A}}^2(\sigma_{K}^{2} - \sigma_{0}^{2})/(\gamma_g\sigma_{0})
    \end{equation}
    and, with $a = \gamma_g/(2\normop{\matr{A}}^2)$, the bounds
    \begin{equation}\label{eq:acc_I_avg_bounds}
    \frac{\sigma_0}{a + \sigma_0}K + \frac{a\sigma_0}{2(a + \sigma_0)^2}K^2 \leqslant T_{K} \leqslant K + \frac{a}{2\sigma_0}K^2.
    \end{equation}

    \item[(d)] [Convergence properties] The sequence of iterates $\{(\bx_{k},\by_{k}^{*})\}_{k=1}^{+\infty}$ is bounded and the individual sequence $\{\bx_{k}\}_{k=1}^{+\infty}$ converges strongly to the unique solution of the primal problem~\eqref{eq:primal_prob}. Moreover, the sequence of averages $\{(\bX_{K},\bY_{K})\}_{K=1}^{+\infty}$ is bounded, it has subsequence that converges weakly to a saddle point of the Lagrangian~\eqref{eq:lagrangian}, and the individual sequence $\{\bX_{K}\}_{K=1}^{+\infty}$ converges strongly to the unique solution of the primal problem~\eqref{eq:primal_prob}. If, in addition, the space $\ycals$ is finite-dimensional, then the individual sequences $\{\by_{k}^{*}\}_{k=1}^{+\infty}$ and $\{\bY_{k}^{*}\}_{k=1}^{+\infty}$ each have a subsequence that converges strongly to a solution $\by_{s}^{*}$ of the dual problem~\eqref{eq:dual_prob}.
    
    % Strong convergence?
\end{enumerate}
\end{prop}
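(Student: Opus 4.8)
The plan is to obtain (a)--(c) as algebraic consequences of the strengthened descent rule~\eqref{eq:descent_rule_sc} and the recurrences~\eqref{eq:acc_sq_exact_iter}, and then to read off the convergence statements in (d) from the resulting estimates. For (a) I would apply~\eqref{eq:descent_rule_sc} to one step of~\eqref{eq:accI_pdhg_alg}, identifying $\bxb=\bx_{k}$, $\bysb=\by_{k}^{*}$, the extrapolate $\byst=\by_{k}^{*}+\theta_{k}(\by_{k}^{*}-\by_{k-1}^{*})$, and $\bxt=\bxh=\bx_{k+1}$, $\bysh=\by_{k+1}^{*}$, with $\gamma_{h^{*}}=0$ since only (A6) is assumed. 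Because $\bxt=\bxh$, the last bracket in~\eqref{eq:descent_rule_sc} vanishes. The key point is that the recurrences are tuned so that the coefficients match: from $\theta_{k+1}=1/\sqrt{1+\gamma_{g}\tau_{k}}$ one has $1/(\theta_{k+1}\tau_{k+1})=(1+\gamma_{g}\tau_{k})/\tau_{k}$ and $1/(\theta_{k+1}\sigma_{k+1})=1/\sigma_{k}$. Expanding $\Delta_{k}-\Delta_{k+1}/\theta_{k+1}$ from~\eqref{eq:app_d_symbol} and subtracting the right-hand side of~\eqref{eq:descent_rule_sc}, the $D_{\phix}(\bx,\bx_{k+1})$, $D_{\phiys}(\bys,\by_{k+1}^{*})$, $D_{\phiys}(\by_{k+1}^{*},\by_{k}^{*})$ terms and one cross term cancel, leaving a slack $S_{k}=\tfrac{1}{\tau_{k}}D_{\phix}(\bx_{k+1},\bx_{k})+\tfrac{1}{\sigma_{k}}D_{\phiys}(\by_{k}^{*},\by_{k-1}^{*})-\theta_{k}\langle\by_{k}^{*}-\by_{k-1}^{*},\matr{A}(\bx_{k}-\bx_{k+1})\rangle$. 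Using (A5) to bound the two divergences below by squared norms and the pairing above by $\normop{\matr{A}}$ times the norms turns $S_{k}$ into a positive-semidefinite quadratic form, precisely because $\tau_{k}\sigma_{k}\normop{\matr{A}}^{2}=\tau_{0}\sigma_{0}\normop{\matr{A}}^{2}=1$ is invariant under~\eqref{eq:acc_sq_exact_iter} and $\theta_{k}\le1$; hence $S_{k}\ge0$ and~\eqref{eq:descent_rule_s_acc_I} follows.

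For (b) I would multiply~\eqref{eq:descent_rule_s_acc_I} by $\sigma_{k}/\sigma_{0}$; since $\sigma_{k+1}/\sigma_{0}=(\sigma_{k}/\sigma_{0})/\theta_{k+1}$ the $\Delta$-terms telescope, and summing over $k=0,\dots,K-1$ bounds the weighted sum of $\mathcal{L}(\bx_{k+1},\bys)-\mathcal{L}(\bx,\by_{k+1}^{*})$ by $\Delta_{0}-(\sigma_{K}/\sigma_{0})\Delta_{K}$. Jensen's inequality (convexity of $\mathcal{L}(\cdot,\bys)$, concavity of $\mathcal{L}(\bx,\cdot)$, weights $\sigma_{k-1}/\sigma_{0}$ summing to $T_{K}$) then gives~\eqref{eq:acc_sq__rateI}. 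At $(\bx_{s},\by_{s}^{*})$ the saddle property forces the left-hand side nonnegative, which yields the right inequality of~\eqref{eq:bound_seq_acc_I}; the left inequality comes from the lower bound $\Delta_{K}(\bx,\bys)\ge\tfrac{\gamma_{g}}{1+\gamma_{g}\tau_{0}}D_{\phix}(\bx,\bx_{K})+\tfrac{1}{\sigma_{K}}D_{\phiys}(\bys,\by_{K}^{*})$, again via (A5) and a Young estimate, whose crux is $(1-\theta_{K}^{2})/\tau_{K}=\gamma_{g}/\sqrt{1+\gamma_{g}\tau_{K-1}}\ge\gamma_{g}/(1+\gamma_{g}\tau_{0})$, valid since $\tau_{k}$ decreases. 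For (c), substituting $\tau_{k}=1/(\normop{\matr{A}}^{2}\sigma_{k})$ into $\sigma_{k+1}=\sigma_{k}\sqrt{1+\gamma_{g}\tau_{k}}$ gives $\sigma_{k+1}^{2}-\sigma_{k}^{2}=\gamma_{g}\sigma_{k}/\normop{\matr{A}}^{2}$; telescoping and dividing by $\sigma_{0}$ produces~\eqref{eq:acc_exact_t_avg}, and substituting $T_{K}=(\sigma_{K}^{2}-\sigma_{0}^{2})/(2a\sigma_{0})$ together with $\sigma_{k+1}<\sigma_{k}+a$ and the inductive increment bound $\sigma_{k+1}-\sigma_{k}\ge a\sigma_{0}/(a+\sigma_{0})$ (using $\sigma_{k}\ge\sigma_{0}$) gives the two bounds in~\eqref{eq:acc_I_avg_bounds}.

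For (d), the global bound~\eqref{eq:bound_seq_acc_I} with $\sigma_{K}\to\infty$ forces $D_{\phix}(\bx_{s},\bx_{K})\to0$, hence $\bx_{K}\to\bx_{s}$ strongly by (A5); as (A6) makes the primal solution unique (Remark~\ref{rem:uniqueness}), $\bx_{s}$ is that solution. The same bound keeps $D_{\phiys}(\by_{s}^{*},\by_{K}^{*})$ bounded, so $\{\by_{k}^{*}\}$ is bounded; $\bX_{K}\to\bx_{s}$ as a weighted average of a strongly convergent sequence with weights summing to $T_{K}\to\infty$, and $\{\bY_{K}^{*}\}$ is bounded. Dropping the nonnegative $\Delta_{K}$ in~\eqref{eq:acc_sq__rateI} yields $\mathcal{L}(\bX_{K},\bys)-\mathcal{L}(\bx,\bY_{K}^{*})\le\Delta_{0}(\bx,\bys)/T_{K}\to0$; reflexivity provides a weakly convergent subsequence of $(\bX_{K},\bY_{K}^{*})$, and weak lower semicontinuity of $g$ and $h^{*}$ lets me pass to the limit and identify it as a saddle point (its primal part being $\bx_{s}$). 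In finite dimensions weak convergence is strong, which settles the claim for $\{\bY_{K}^{*}\}$.

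The main obstacle is the strong subsequential convergence of the dual iterates $\{\by_{k}^{*}\}$ in finite dimensions: passing to the limit in the optimality conditions of~\eqref{eq:accI_pdhg_alg} is obstructed by the primal perturbation $\tfrac{1}{\tau_{k}}(\nabla\phix(\bx_{k+1})-\nabla\phix(\bx_{k}))$, whose prefactor $1/\tau_{k}\to\infty$. I would exploit the slacks: telescoping gives $\sum_{k}(\sigma_{k}/\sigma_{0})S_{k}\le\Delta_{0}$, and the quadratic-form bound $S_{k}\ge\tfrac{1-\theta_{k}}{2}\big(\tfrac{1}{\tau_{k}}\normx{\bx_{k+1}-\bx_{k}}^{2}+\tfrac{1}{\sigma_{k}}\normys{\by_{k}^{*}-\by_{k-1}^{*}}^{2}\big)$ combined with $1-\theta_{k}\sim1/k$ and $\sigma_{k}/\tau_{k}\sim k^{2}$ from (c) yields $\sum_{k}k\normx{\bx_{k+1}-\bx_{k}}^{2}<\infty$ and $\sum_{k}\tfrac{1}{k}\normys{\by_{k}^{*}-\by_{k-1}^{*}}^{2}<\infty$. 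A harmonic-series argument then gives $\liminf_{k}\max(k\normx{\bx_{k+1}-\bx_{k}},\normys{\by_{k}^{*}-\by_{k-1}^{*}})=0$, so along a common subsequence the perturbation vanishes and consecutive dual iterates (hence the extrapolate $\byst$) share a limit $\bar{\by}^{*}$; passing to the limit in both optimality conditions via closedness of the graphs of $\partial g$ and $\partial h^{*}$ gives $-\matr{A}^{*}\bar{\by}^{*}\in\partial g(\bx_{s})$ and $\matr{A}\bx_{s}\in\partial h^{*}(\bar{\by}^{*})$, so $(\bx_{s},\bar{\by}^{*})$ is a saddle point and $\bar{\by}^{*}$ solves the dual. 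I expect the delicate points to be the local regularity of $\nabla\phix$ needed to control the perturbation term and the careful tracking of constants in the weighted summability.
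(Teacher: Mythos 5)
Your treatment of (a)--(c) and of the primal and averaged parts of (d) is correct and essentially identical to the paper's: the same instantiation of~\eqref{eq:descent_rule_sc} with $\bxt=\bxh=\bx_{k+1}$ and $\gamma_{h^*}=0$, the same use of the invariant $\tau_k\sigma_k\normop{\matr{A}}^2=1$ and $\theta_k\leqslant 1$ to make the slack nonnegative (the paper applies Young's inequality to the single cross term $\theta_k\langle\by_k^*-\by_{k-1}^*,\matr{A}(\bx_{k+1}-\bx_k)\rangle$ rather than phrasing it as a positive-semidefinite quadratic form, but this is the same computation), the same $\sigma_k/\sigma_0$-weighted telescoping with Jensen, the same lower bound on $\Delta_K$ via $(1-\theta_K^2)/\tau_K\geqslant\gamma_g/(1+\gamma_g\tau_0)$, and the same telescoping of $\sigma_{k+1}^2-\sigma_k^2=\gamma_g\sigma_k/\normop{\matr{A}}^2$ for part (c).

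The one place you genuinely diverge is the last claim of (d), the strong subsequential convergence of $\{\by_k^*\}$ to a dual solution when $\ycals$ is finite-dimensional, and there your route has a real gap that you yourself flag. Passing to the limit in the first-order optimality conditions of~\eqref{eq:accI_pdhg_alg} requires $\frac{1}{\tau_k}\normxs{\nabla\phix(\bx_{k+1})-\nabla\phix(\bx_k)}\to 0$ along the chosen subsequence, and your summability estimate only controls $\frac{1}{\tau_k}\normx{\bx_{k+1}-\bx_k}$. Converting one into the other needs $\nabla\phix$ to be locally Lipschitz near the limit points, which is not implied by (A1)--(A6): essential smoothness gives only continuity of $\nabla\phix$ on $\interior(\dom\phix)$, and in the paper's main applications $\phix=\mathcal{H}_n$, whose gradient blows up as the iterates approach the boundary of the simplex --- precisely where a sparse primal solution lives. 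The paper avoids optimality conditions altogether: in finite dimensions it extracts a strongly convergent subsequence of the bounded sequence $\{(\bx_k,\by_k^*)\}$ by compactness and identifies the limit as a saddle point through a $\liminf$ argument on the Lagrangian gap using the descent rule and lower semicontinuity of $g$ and $h^*$, exactly as in the proof of Proposition~\ref{prop:basic_algI}(c); no regularity of $\nabla\phix$ beyond (A2) is needed. If you replace your optimality-condition argument with that Lagrangian-gap argument (and note that the claim for $\{\bY_K^*\}$ already follows from your weak-subsequence result since weak and strong convergence coincide in finite dimensions), the proof closes.
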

\begin{proof}
See Appendix~\ref{app:C}.
\end{proof}

\begin{rem}[Choice of the free parameter $\sigma_0$]\label{rem:param_choice}
The accelerated nonlinear PDHG method~\eqref{eq:accI_pdhg_alg} converges at a rate determined by the average quantity $T_{K}$, which depends on the stepsize parameter $\sigma_{0}$. One possible choice for $\sigma_0$ is to choose it so as to maximize the coefficient multiplying $K^2$ in the lower bound~\eqref{eq:acc_I_avg_bounds} of $T_{K}$. This coefficient is maximized for the choice of $\sigma_0 = \gamma_g/(2\normop{\matr{A}}^2)$.
\end{rem}

%%%%%%%%%%%%%%%%%%%%%%%%%%%%
\subsection{Accelerated nonlinear PDHG methods for smooth convex problems}\label{subsec:accII}

We present a variant of the first accelerated nonlinear PDHG method. It requires statement (A7) to hold with $\gamma_{h^*} > 0$, and it is similar to method~\eqref{eq:accI_pdhg_alg}; it takes two free parameters $\theta_{0} \in (0,1]$ and $\tau_{0} > 0$, a set parameter $\sigma_{0} = 1/(\normop{\matr{A}}^2\tau_0)$, an initial point $\by_{0}^{*} \in \dom \partial h^*$, and the initial points $\bx_{-1} = \bx_{0} \in \dom \partial g$ to generate the iterates
\begin{equation}\label{eq:accII_pdhg_alg}
\begin{dcases}
    \by_{k+1}^{*} &= \argmax_{\by^{*} \in \ycal^*} \left\{-h^{*}(\by^{*}) + \left\langle \by^{*}, \matr{A}(\bx_{k} + \theta_{k}(\bx_{k} - \bx_{k-1}))\right\rangle - \frac{1}{\sigma_{k}}D_{\phiys}(\by^{*},\by_{k}^{*})\right\}, \\
    \bx_{k+1} &= \argmin_{\bx \in \xcal} \left\{g(\bx) + \left\langle \by_{k+1}^{*}, \matr{A}\bx\right\rangle + \frac{1}{\tau_{k}}D_{\phix}(\bx,\bx_{k})\right\} \\
\end{dcases}
\end{equation}
where the parameters $\tau_{k}, \sigma_{k}, \theta_{k}$ for $k \in \mathbb{N}$ satisfy the recurrence relations
\begin{equation}\label{eq:acc_II_sq_exact_iter}
\theta_{k+1} = \frac{1}{\sqrt{1 + \gamma_{h^*} \sigma_{k}}}, \quad \tau_{k+1} = \tau_{k}/\theta_{k+1}, \quad \mathrm{and} \quad \sigma_{k+1} = \theta_{k+1}\sigma_{k}.
\end{equation}
Under assumptions (A1)-(A4), Lemma~\ref{lem:descent_rule} applies to method~\eqref{eq:accII_pdhg_alg}, and the method generates points $(\bx_{k},\by_{k}^{*})$ that are contained in $\dom \partial g \times \dom \partial h^*$. As such, method~\eqref{eq:accII_pdhg_alg} is well-defined. If, in addition, assumptions (A5) and (A7) hold, then this method satisfies the following properties:
\begin{prop}\label{prop:acc_sq_algII}
Assume (A1)-(A5) and (A7) hold. Let $\theta_{0} \in (0,1]$, $\tau_{0} > 0$ and $\sigma_{0} = 1/(\normop{\matr{A}}^2\tau_{0})$, let $(\bx_0,\by_{0}^{*}) \in \dom \partial g \times \dom \partial h^*$, let $\bx_{-1}^{*} = \bx_{0}^{*}$, and let $(\bx_{s},\by_{s}^{*})$ denote a saddle point of the Lagrangian~\eqref{eq:lagrangian}. Consider the sequence of iterates $\{(\bx_{k},\by_{k}^{*})\}_{k=1}^{K}$ with $K \in \mathbb{N}$ generated by the accelerated nonlinear PDHG method~\eqref{eq:accII_pdhg_alg} and the recurrence relations~\eqref{eq:acc_II_sq_exact_iter} from the initial points $(\bx_0,\by_{0}^{*})$ and $\bx_{-1}$ and initial parameters $\tau_0$, $\sigma_0$, and $\theta_0$, and define the averages
\[
T_{K} = \sum_{k=1}^{K}\frac{\tau_{k-1}}{\tau_{0}}, \quad \bX_{K} = \frac{1}{T_{K}}\sum_{k=1}^{K} \frac{\tau_{k-1}}{\tau_{0}} \bx_{k} \quad \mathrm{and} \quad \bY_{K}^{*} = \frac{1}{T_{K}}\sum_{k=1}^{K} \frac{\tau_{k-1}}{\tau_{0}} \by_{k}^{*}
\]
and for $(\bx,\by^{*}) \in \dom g \times \dom h^*$, the quantity
\begin{equation*}
    \Delta_{k}(\bx,\by^{*}) = \frac{1}{\tau_{k}}D_{\phix}(\bx,\bx_{k}) + \frac{1}{\sigma_{k}}D_{\phiys}(\bys,\by_{k}^{*}) + \frac{1}{\tau_{k}}D_{\phix}(\bx_{k},\bx_{k-1}) + \theta_{k}\left\langle\by^{*}-\by_{k}^{*},\matr{A}(\bx_{k}-\bx_{k-1}) \right\rangle.
\end{equation*}
Then:
\begin{enumerate}
    \item[(a)] For every $(\bx,\bys) \in \dom g \times \dom h^*$ and nonnegative integer $k$, the output $(\bx_{k+1},\by_{k+1}^{*})$ of the accelerated nonlinear PDHG method~\eqref{eq:accII_pdhg_alg} satisfies the descent rule
    \begin{equation*}
    \begin{alignedat}{1}
    \mathcal{L}(\bx_{k+1},\bys) - \mathcal{L}(\bx,\by_{k+1}^{*}) &\leqslant \Delta_{k}(\bx,\by^{*}) - \Delta_{k+1}(\bx,\by^{*})/\theta_{k+1}.
    \end{alignedat}    
    \end{equation*}
    
    \item[(b)] For every $(\bx,\bys) \in \dom g \times \dom h^*$, we have the estimate
    \begin{equation*}
    T_{K}\left(\mathcal{L}(\bX_{K},\bys) - \mathcal{L}(\bx,\bY_{K}^{*})\right) \leqslant \Delta_{0}(\bx,\by^{*}) - \frac{\tau_{K}}{\tau_{0}}\Delta_{K}(\bx,\by^{*})
    \end{equation*}
    and, for the choice of the saddle point $(\bx,\by^{*}) = (\bx_{s},\by_{s}^{*})$, the global bound
    \begin{equation*}
        \frac{1}{\tau_{K}}D_{\phix}(\bx_{s},\bx_{K}) + \frac{\gamma_{h^{*}}}{1 + \gamma_{h^{*}}\sigma_{0}}D_{\phiys}(\by^{*}_{s},\by^{*}_{K}) \leqslant \Delta_{K}(\bx_{s},\by_{s}^{*}) \leqslant \frac{\tau_0}{\tau_{K}}\Delta_{0}(\bx_{s},\by_{s}). 
    \end{equation*}
    
    \item[(c)] The average quantity $T_K$ satisfies the formula
    \begin{equation*}
    T_{K} = \normop{\matr{A}}^2(\tau_{K}^{2} - \tau_{0}^{2})/(\gamma_{h^*}\tau_{0})
    \end{equation*}
    and, with $a = \gamma_{h^*}/(2\normop{\matr{A}}^2)$, the bounds
    \begin{equation*}
    \frac{\tau_0}{a + \tau_0}K + \frac{a\tau_0}{2(a + \tau_0)^2}K^2 \leqslant T_{K} \leqslant K + \frac{a}{2\tau_0}K^2.
    \end{equation*}
    
    \item[(d)] [Convergence properties] The sequence of iterates $\{(\bx_{k},\by_{k}^{*})\}_{k=1}^{+\infty}$ is bounded and the individual sequence $\{\by_{k}^{*}\}_{k=1}^{+\infty}$ converges strongly to the unique solution of the dual problem~\eqref{eq:dual_prob}. Moreover, the sequence of averages $\{(\bX_{K},\bY_{K})\}_{K=1}^{+\infty}$ is bounded, it has subsequence that converges weakly to a saddle point of the Lagrangian~\eqref{eq:lagrangian}, and the individual sequence $\{\bY_{K}^{*}\}_{K=1}^{+\infty}$ converges strongly to the unique solution of the dual problem~\eqref{eq:dual_prob}. If, in addition, the space $\xcal$ is finite-dimensional, then the individual sequences $\{\bx_{k}\}_{k=1}^{+\infty}$ and $\{\bX_{k}\}_{k=1}^{+\infty}$ each have a subsequence that converges strongly to a solution $\bx_{s}$ of the primal problem~\eqref{eq:dual_prob}.
    
    % Subsequence that converges for the iterates? What if X is finite dimensional?
\end{enumerate}
\end{prop}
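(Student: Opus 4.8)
The plan is to mirror the proof of Proposition~\ref{prop:acc_sq_algI} under the primal--dual symmetry that exchanges $(\bx,g,\tau,\gamma_g,\phix)$ with $(\bys,h^*,\sigma,\gamma_{h^*},\phiys)$: method~\eqref{eq:accII_pdhg_alg} is method~\eqref{eq:accI_pdhg_alg} with the two proximal steps reversed and the extrapolation moved onto the primal variable. Two structural facts are used throughout. First, the recurrences~\eqref{eq:acc_II_sq_exact_iter} preserve the product $\tau_k\sigma_k = \tau_0\sigma_0 = 1/\normop{\matr{A}}^2$, so $\normop{\matr{A}}\sqrt{\tau_k\sigma_k} = 1$ for all $k$, while $\tau_k$ is nondecreasing and $\sigma_k$ is nonincreasing (as $\theta_{k+1}\le 1$). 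Second, because the dual step is performed first, I apply the descent rule~\eqref{eq:descent_rule_sc} (with $\gamma_g = 0$, since only (A7) is assumed) with the identification $\bxb = \bx_k$, $\bysb = \by_k^*$, $\bxt = \bx_k + \theta_k(\bx_k - \bx_{k-1})$, $\bxh = \bx_{k+1}$ and $\byst = \bysh = \by_{k+1}^*$; since the intermediate and output dual points coincide, the term $\langle \byst - \bysh, \matr{A}(\bx - \bxt)\rangle$ vanishes and only $-\langle \bys - \by_{k+1}^*, \matr{A}(\bxt - \bx_{k+1})\rangle$ survives.

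For part~(a), I substitute these choices into~\eqref{eq:descent_rule_sc}, expand the surviving bilinear term via $\bxt - \bx_{k+1} = -(\bx_{k+1}-\bx_k) + \theta_k(\bx_k - \bx_{k-1})$, and then use the recurrence identities $1/(\theta_{k+1}\tau_{k+1}) = 1/\tau_k$ and $1/(\theta_{k+1}\sigma_{k+1}) = (1+\gamma_{h^*}\sigma_k)/\sigma_k$ to rewrite the Bregman and coupling terms. These identities collapse the right-hand side exactly to $\Delta_k(\bx,\bys) - \Delta_{k+1}(\bx,\bys)/\theta_{k+1}$ plus the residual
\[
R_k = \theta_k\left\langle \by_{k+1}^* - \by_k^*, \matr{A}(\bx_k - \bx_{k-1})\right\rangle - \frac{1}{\tau_k}D_{\phix}(\bx_k,\bx_{k-1}) - \frac{1}{\sigma_k}D_{\phiys}(\by_{k+1}^*,\by_k^*),
\]
so it remains to prove $R_k \le 0$. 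This is the crux of the argument and the step I expect to be the main obstacle, exactly as in Proposition~\ref{prop:acc_sq_algI}, with the only real subtlety being the bookkeeping forced by the reversed update order. Bounding the pairing by Cauchy--Schwarz and $\normop{\matr{A}}$, invoking the $1$-strong convexity in (A5) to get $D_{\phix}(\bx_k,\bx_{k-1}) \ge \tfrac12\normx{\bx_k - \bx_{k-1}}^2$ and $D_{\phiys}(\by_{k+1}^*,\by_k^*) \ge \tfrac12\normys{\by_{k+1}^* - \by_k^*}^2$, and applying the arithmetic--geometric mean inequality, the two Bregman terms dominate $\normop{\matr{A}}\normx{\bx_k-\bx_{k-1}}\normys{\by_{k+1}^*-\by_k^*}$ because $1/\sqrt{\tau_k\sigma_k} = \normop{\matr{A}}$; since $\theta_k \le 1$ this gives $R_k \le 0$.

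Parts~(b) and~(c) are then essentially algebraic. Choosing weights with $w_0 = 1$ and $w_{k+1} = w_k/\theta_{k+1}$ gives $w_k = \tau_k/\tau_0$, so summing the step-$k$ instance of~(a) against $w_k$ telescopes the right-hand side to $\Delta_0(\bx,\bys) - (\tau_K/\tau_0)\Delta_K(\bx,\bys)$; on the left, convexity of $\mathcal{L}(\cdot,\bys)$ and concavity of $\mathcal{L}(\bx,\cdot)$ let me pass through Jensen's inequality to $\bX_K,\bY_K^*$, giving the estimate of~(b). Taking $(\bx,\bys) = (\bx_{s},\by_{s}^{*})$ and using the saddle-point inequality $\mathcal{L}(\bX_K,\by_{s}^{*}) - \mathcal{L}(\bx_{s},\bY_K^*) \ge 0$ yields the global bound, after one more (A5)/arithmetic--geometric-mean estimate to discard the momentum and coupling terms of $\Delta_K$ from below; here the monotonicity $\sigma_{K-1} \le \sigma_0$ is what turns the coefficient into $\gamma_{h^*}/(1+\gamma_{h^*}\sigma_0)$. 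For~(c), the relation $\sigma_k = 1/(\normop{\matr{A}}^2\tau_k)$ turns the recurrence into $\tau_{k+1}^2 - \tau_k^2 = \gamma_{h^*}\tau_k/\normop{\matr{A}}^2$; summing over $k$ gives the closed form for $T_K$, and the elementary bracketing $\tau_{k+1} < \tau_k + a$ together with $\tau_k \ge \tau_0$ produces the quadratic bounds with $a = \gamma_{h^*}/(2\normop{\matr{A}}^2)$.

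Finally, for part~(d) I use that $\tau_K \to \infty$, so the global bound forces $\Delta_K(\bx_{s},\by_{s}^{*}) \le (\tau_0/\tau_K)\Delta_0(\bx_{s},\by_{s}^{*}) \to 0$; its lower bound then gives $D_{\phiys}(\by_{s}^{*},\by_K^*) \to 0$, hence $\by_k^* \to \by_{s}^{*}$ strongly by (A5), and $\by_{s}^{*}$ is the unique dual solution by Remark~\ref{rem:uniqueness}. The same bound shows $D_{\phix}(\bx_{s},\bx_K)$ stays bounded, so $\{\bx_k\}$ and hence $\{(\bx_k,\by_k^*)\}$ is bounded. The estimate in~(b) gives a duality gap of order $1/T_K = O(1/K^2)$ for $(\bX_K,\bY_K^*)$, so by reflexivity a subsequence converges weakly to a saddle point, while $\bY_K^*$ inherits the strong limit $\by_{s}^{*}$ from the convergence of $\by_k^*$; when $\xcal$ is finite-dimensional the bounded primal iterates admit strongly convergent subsequences whose limits, by the vanishing gap, solve the primal problem. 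Each of these conclusions is the mirror image of Proposition~\ref{prop:acc_sq_algI}(d) with the primal and dual roles interchanged.
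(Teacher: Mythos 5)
Your proposal is correct and is essentially the paper's own (omitted) proof: the authors simply mirror the argument of Proposition~\ref{prop:acc_sq_algI} in Appendix~\ref{app:C} under the primal--dual exchange, and your identification of the iteration with~\eqref{eq:iteration_PG} (so that $\byst=\bysh=\by_{k+1}^*$ and only the term $-\left\langle \bys-\by_{k+1}^*,\matr{A}(\bxt-\bx_{k+1})\right\rangle$ survives), your residual
\[
R_k = \theta_k\left\langle \by_{k+1}^* - \by_k^*, \matr{A}(\bx_k - \bx_{k-1})\right\rangle - \frac{1}{\tau_k}D_{\phix}(\bx_k,\bx_{k-1}) - \frac{1}{\sigma_k}D_{\phiys}(\by_{k+1}^*,\by_k^*) \leqslant 0,
\]
and the subsequent telescoping, $T_K$ formula, and convergence arguments all match. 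One remark: carrying out the expansion shows that the coupling term of $\Delta_k$ must enter as $\theta_k\left\langle \by_k^*-\bys,\matr{A}(\bx_k-\bx_{k-1})\right\rangle$ (equivalently $-\theta_k\left\langle \bys-\by_k^*,\matr{A}(\bx_k-\bx_{k-1})\right\rangle$) for the identity in part~(a) to close; with the sign as literally printed in the proposition the uncancelled term $-2\left\langle \bys-\by_{k+1}^*,\matr{A}(\bx_{k+1}-\bx_k)\right\rangle$ would remain and cannot be absorbed uniformly in $\bys$. Your residual $R_k$ is exactly the one produced by the corrected sign, so your computation is the right one; the sign discrepancy is a typo in the statement (it is immaterial for the lower bound in (b), for (c), and for (d), since there the coupling term is only ever estimated through its absolute value).
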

\begin{proof}
The proof is essentially the same as for Proposition~\ref{prop:acc_sq_algI} and is omitted.
\end{proof}

%%%%%%%%%%%%%%%%%%%%%%%%%%%%
\subsection{Accelerated nonlinear PDHG method for smooth and strongly convex problems I}\label{subsec:accIII}

The second accelerated nonlinear PDHG method requires statements (A6) and (A7) to hold with $\gamma_g > 0$ and $\gamma_{h^*} > 0$. It takes the parameters
\begin{equation}\label{eq:accIII_params}
    \theta = 1  - \frac{\gamma_g\gamma_{h^*}}{2\normop{\matr{A}}^2}\left(\sqrt{1+\frac{4\normop{\matr{A}}^2}{\gamma_g\gamma_{h^*}}}-1\right), \quad \tau = \frac{1-\theta}{\gamma_g\theta}, \quad \mathrm{and} \quad \sigma = \frac{1-\theta}{\gamma_{h^*}\theta},
\end{equation}
an initial point $\bx_{0} \in \dom \partial g$, and the initial points $\by_{-1}^{*} = \by_{0}^{*} \in \dom \partial h^*$ to generate the iterates
\begin{equation}\label{eq:accIII_pdhg_alg}
\begin{dcases}
    \bx_{k+1} &= \argmin_{\bx \in \xcal} \left\{g(\bx) + \left\langle \by_{k}^{*} + \theta(\by_{k}^{*} - \by_{k-1}^{*}), \matr{A}\bx\right\rangle + \frac{1}{\tau}D_{\phix}(\bx,\bx_{k})\right\}, \\
    \by_{k+1}^{*} &= \argmax_{\by^{*} \in \ycal^*} \left\{-h^{*}(\by^{*}) + \left\langle \by^{*}, \matr{A}\bx_{k+1}\right\rangle - \frac{1}{\sigma}D_{\phiys}(\by^{*},\by_{k}^{*})\right\}. \\
\end{dcases}
\end{equation}
Under assumptions (A1)-(A4), Lemma~\ref{lem:descent_rule} applies to method~\eqref{eq:accIII_pdhg_alg}, and the method generates points $(\bx_{k},\by_{k}^{*})$ that are contained in $\dom \partial g \times \dom \partial h^*$. As such, method~\eqref{eq:accIII_pdhg_alg} is well-defined. If, in addition, assumptions (A5)-(A7) hold, then this method satisfies the following properties:

\begin{prop}\label{prop:acc_sq_algIII}
Assume (A1)-(A7) hold. Let $(\bx_{0},\by_{0}^{*}) \in \dom \partial g \times \dom \partial h^*$, let $\by_{-1}^{*} = \by_{0}^{*}$, and let $(\bx_{s},\by_{s}^{*})$ denote the unique saddle point of the Lagrangian~\eqref{eq:lagrangian}. Consider the sequence of iterates $\{(\bx_{k},\by_{k}^{*})\}_{k=1}^{K}$ with $K \in \mathbb{N}$ generated by the accelerated nonlinear PDHG method~\eqref{eq:accIII_pdhg_alg} from the initial points $\bx_0$, $\by_{0}^{*}$ and $\by_{-1}^{*}$, and the parameters $\theta$, $\tau$, and $\sigma$ defined in~\eqref{eq:accIII_params}. Define the averages
\[
T_{K} = \sum_{k=1}^{K} \frac{1}{\theta^{k-1}} = \frac{1-\theta^{K}}{(1-\theta)\theta^{K-1}}, \quad \bX_{K} = \frac{1}{T_K}\sum_{k=1}^{K}\frac{1}{\theta^{k-1}}\bx_{k} \quad \text{and} \quad \bY_{K}^{*} = \frac{1}{T_K}\sum_{k=1}^{K}\frac{1}{\theta^{k-1}} \by_{k}^{*},
\]
and for $(\bx,\by^{*}) \in \dom g \times \dom h^*$, the quantity
\begin{equation}\label{eq:app_e_symbol}
\Delta_{k}(\bx,\by^{*}) = \frac{1}{\tau}D_{\phix}(\bx,\bx_{k}) + \frac{1}{\sigma}D_{\phiys}(\by^{*},\by_{k}^{*}) + \frac{\theta}{\sigma}D_{\phiys}(\by_{k}^{*},\by_{k-1}^{*}) + \theta\left\langle\by_{k}^{*}-\by_{k-1}^{*},\matr{A}(\bx-\bx_{k}) \right\rangle.
\end{equation}
Then:
\begin{enumerate}
    \item[(a)] For every $(\bx,\bys) \in \dom g \times \dom h^*$ and nonnegative integer $k$, the output $(\bx_{k+1},\by_{k+1}^{*})$ of the accelerated nonlinear PDHG method~\eqref{eq:accIII_pdhg_alg} satisfies the descent rule
    \begin{equation}\label{eq:descent_rule_s_acc_II}
        \mathcal{L}(\bx_{k+1},\by^{*}) - \mathcal{L}(\bx,\by_{k+1}^{*}) \leqslant \Delta_{k}(\bx,\by^{*}) -  \Delta_{k+1}(\bx,\by^{*})/\theta.
    \end{equation}
    
    \item[(b)] For every $(\bx,\bys) \in \dom g \times \dom h^*$, we have the estimate
    \begin{equation}\label{eq:acc_sq__rateIII}
        T_{K}\left(\mathcal{L}(\bX_{K},\by^{*}) - \mathcal{L}(\bx,\bY_{K}^{*})\right) \leqslant \Delta_{0}(\bx,\by^{*}) - \frac{1}{\theta^{K}}\Delta_{K}(\bx,\by^{*})
    \end{equation}
    and, for the choice of the saddle point $(\bx,\by^{*}) = (\bx_{s},\by_{s}^{*})$, the global bound
    \begin{equation}\label{eq:bound_seq_acc_II}
    \frac{1}{\sigma}D_{\phiys}(\by^{*}_{s},\by^{*}_{K})\leqslant \Delta_{K}(\bx_{s},\by^{*}_{s}) \leqslant \theta^{K}\Delta_{0}(\bx_{s},\by^{*}_{s}).
    \end{equation}
    
    \item [(c)] [Convergence properties] The sequences $\{(\bx_{k},\by_{k}^{*})\}_{k=1}^{+\infty}$ and $\{(\bX_{K},\bY_{K})\}_{K=1}^{+\infty}$ both converge strongly to the unique saddle point $(\bx_{s},\by_{s}^{*})$ of the Lagrangian~\eqref{eq:lagrangian}.
\end{enumerate}
\end{prop}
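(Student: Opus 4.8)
The plan is to follow the template of Proposition~\ref{prop:acc_sq_algI}, beginning from the strengthened descent rule~\eqref{eq:descent_rule_sc}, which is available here because both (A6) and (A7) hold. The whole argument hinges on three algebraic identities satisfied by the constant parameters~\eqref{eq:accIII_params}. Rewriting $\tau = (1-\theta)/(\gamma_g\theta)$ and $\sigma = (1-\theta)/(\gamma_{h^*}\theta)$ gives
\[
1 + \gamma_g\tau = \frac{1}{\theta} = 1 + \gamma_{h^*}\sigma,
\]
while the defining equation for $\theta$ is precisely the statement that $(1-\theta)^2\normop{\matr{A}}^2 = \gamma_g\gamma_{h^*}\theta$, which is equivalent to
\[
\tau\sigma\normop{\matr{A}}^2 = \frac{1}{\theta}.
\]
I would verify these first, since the strongly convex coefficients $\tfrac{1+\gamma_g\tau}{\tau}$ and $\tfrac{1+\gamma_{h^*}\sigma}{\sigma}$ in~\eqref{eq:descent_rule_sc} then collapse to $\tfrac{1}{\theta\tau}$ and $\tfrac{1}{\theta\sigma}$, exactly the denominators needed for a $\Delta_{k+1}/\theta$ telescoping.

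For part (a), I would substitute into~\eqref{eq:descent_rule_sc} the intermediate points dictated by~\eqref{eq:accIII_pdhg_alg}, namely $\bxt = \bx_{k+1}$ (so the pairing $\langle \bys - \byst, \matr{A}(\bxt - \bxh)\rangle$ vanishes) and $\byst = \by_k^* + \theta(\by_k^* - \by_{k-1}^*)$, and expand the remaining pairing as $\langle \by_k^* - \by_{k+1}^*, \matr{A}(\bx - \bx_{k+1})\rangle + \theta\langle \by_k^* - \by_{k-1}^*, \matr{A}(\bx - \bx_{k+1})\rangle$. Expanding $\Delta_k(\bx,\by^*) - \Delta_{k+1}(\bx,\by^*)/\theta$ from~\eqref{eq:app_e_symbol} and matching terms, everything cancels except a leftover requiring
\[
\theta\left\langle \by_k^* - \by_{k-1}^*, \matr{A}(\bx_k - \bx_{k+1})\right\rangle \leqslant \frac{1}{\tau}D_{\phix}(\bx_{k+1},\bx_k) + \frac{\theta}{\sigma}D_{\phiys}(\by_k^*,\by_{k-1}^*).
\]
I would close this with Cauchy--Schwarz and the operator-norm definition, the $1$-strong convexity bounds $D_{\phix}(\bx_{k+1},\bx_k)\geqslant \tfrac12\normx{\bx_{k+1}-\bx_k}^2$ and $D_{\phiys}(\by_k^*,\by_{k-1}^*)\geqslant \tfrac12\normys{\by_k^*-\by_{k-1}^*}^2$ from (A5), and AM--GM; the identity $\tau\sigma\normop{\matr{A}}^2 = 1/\theta$ makes the resulting quadratic form a perfect square, so the bound holds exactly.

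For part (b), I would multiply~\eqref{eq:descent_rule_s_acc_II} by $\theta^{-k}$ and sum over $k = 0,\dots,K-1$; the right-hand side telescopes to $\Delta_0 - \theta^{-K}\Delta_K$, while the left-hand side, after reindexing, becomes $\sum_{k=1}^{K}\theta^{-(k-1)}(\mathcal{L}(\bx_k,\by^*) - \mathcal{L}(\bx,\by_k^*))$. Jensen's inequality, using convexity of $\mathcal{L}(\cdot,\by^*)$ and concavity of $\mathcal{L}(\bx,\cdot)$, lower-bounds this by $T_K(\mathcal{L}(\bX_K,\by^*) - \mathcal{L}(\bx,\bY_K^*))$, giving~\eqref{eq:acc_sq__rateIII}. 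For the global bound~\eqref{eq:bound_seq_acc_II} I would set $(\bx,\by^*) = (\bx_s,\by_s^*)$: the saddle-point inequalities for~\eqref{eq:lagrangian} force the left-hand side of~\eqref{eq:acc_sq__rateIII} to be nonnegative, which yields $\Delta_K(\bx_s,\by_s^*) \leqslant \theta^K\Delta_0(\bx_s,\by_s^*)$; and the same perfect-square estimate applied to the cross term inside $\Delta_K$ shows $\Delta_K(\bx_s,\by_s^*) - \tfrac1\sigma D_{\phiys}(\by_s^*,\by_K^*)\geqslant 0$, which is the lower bound.

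For part (c), the upper bound in~\eqref{eq:bound_seq_acc_II} gives $\Delta_K(\bx_s,\by_s^*)\to 0$ since $\theta\in(0,1)$, and the lower bound together with (A5) gives $\by_K^*\to\by_s^*$ strongly, hence $\by_K^*-\by_{K-1}^*\to 0$. The remainder $R_K := \Delta_K(\bx_s,\by_s^*) - \tfrac1\sigma D_{\phiys}(\by_s^*,\by_K^*)\to 0$ is, by the perfect-square identity, bounded below by $\tfrac12(\tau^{-1/2}\normx{\bx_s-\bx_K} - (\theta/\sigma)^{1/2}\normys{\by_K^*-\by_{K-1}^*})^2$, so combined with $\by_K^*-\by_{K-1}^*\to 0$ this forces $\bx_K\to\bx_s$ strongly. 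Finally, since the weights $\theta^{-(k-1)}/T_K$ define a regular (Toeplitz) summation method, the weighted averages $\bX_K$ and $\bY_K^*$ inherit the strong limits $\bx_s$ and $\by_s^*$. The main obstacle is exactly that the parameter choice makes $\tau\sigma\normop{\matr{A}}^2 = 1/\theta$ hold with equality, so the quadratic form controlling the cross term is degenerate: this is what yields the clean linear telescoping, but it also prevents extracting a positive multiple of $D_{\phix}(\bx_s,\bx_K)$ directly from $\Delta_K$, which is why primal strong convergence must be routed through the dual iterates.
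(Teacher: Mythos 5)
Your proposal is correct and follows essentially the same route as the paper's proof in Appendix~\ref{app:D}: the two parameter identities $1+\gamma_g\tau = 1+\gamma_{h^*}\sigma = 1/\theta$ and $\tau\sigma\theta\normop{\matr{A}}^2 = 1$, the strengthened descent rule~\eqref{eq:descent_rule_sc} with $\bxt = \bx_{k+1}$ and $\byst = \by_k^* + \theta(\by_k^*-\by_{k-1}^*)$, the Cauchy--Schwarz/strong-convexity bound on the cross term, the $\theta^{-k}$-weighted telescoping with convexity--concavity of $\mathcal{L}$, and the convergence argument that first extracts $\by_k^*\to\by_s^*$ from the global bound and then recovers $\bx_k\to\bx_s$ from the remaining quadratic form. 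Your perfect-square packaging of that last step is just a reorganization of the paper's estimate (which instead picks $\alpha = 1/(2\tau\theta\normop{\matr{A}})$ in Fact~\ref{fact:cauchy_ineq_cont} to leave a positive multiple of $\normx{\bx_s-\bx_k}^2$ on the left), so there is nothing substantively different to flag.
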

\begin{proof}
See Appendix~\ref{app:D} for the proof.
\end{proof}

% TODO: Keep or remove?
% \begin{rem}[Rate of convergence]
% The linear rate of convergence $O(\theta^{k})$ of the accelerated nonlinear PDHG method~\eqref{eq:accIII_pdhg_alg} is controlled by the ratio $\gamma_g \gamma_{h^*}/\normop{\matr{A}}^2$ in the parameter $\theta$. When this ratio is small, the convergence rate is approximately given by
% \[
% \theta = 1  - \frac{\gamma_g\gamma_{h^*}}{2\normop{\matr{A}}^2}\left(\sqrt{1+\frac{4\normop{\matr{A}}^2}{\gamma_g\gamma_{h^*}}}-1\right) \leqslant 1 - \frac{\sqrt{\gamma_{g}\gamma_{h^*}}}{\normop{\matr{A}}} + \frac{\gamma_g\gamma_{h^*}}{\normop{\matr{A}}^2} \approx 1 - \frac{\sqrt{\gamma_{g}\gamma_{h^*}}}{\normop{\matr{A}}}.
% \]
% Note that this is the same rate of convergence as computed in~\cite[Section 6.2, Page 275]{chambolle2016ergodic}.
% \end{rem}
% Keep this? This is essentially the same as in Chambolle 2016... It is also fairly obvious...

\subsection{Accelerated nonlinear PDHG method for smooth and strongly convex problems II}\label{subsec:accII_var}
% TODO: Usefulness of the variant? For numerics with Ridge regression? Logisic regression?

We present a variant of the accelerated nonlinear PDHG method~\eqref{eq:accIII_pdhg_alg}. It requires statements (A6) and (A7) to hold with $\gamma_g > 0$ and $\gamma_{h^*} > 0$ and it takes the parameters \begin{equation}\label{eq:accII_params_var}
    \theta = 1  - \frac{\gamma_g\gamma_{h^*}}{2\normop{\matr{A}}^2}\left(\sqrt{1+\frac{4\normop{\matr{A}}^2}{\gamma_g\gamma_{h^*}}}-1\right), \quad \tau = \frac{1-\theta}{\gamma_g\theta}, \quad \mathrm{and} \quad \sigma = \frac{1-\theta}{\gamma_{h^*}\theta},
\end{equation}
the initial points $\bx_{-1} = \bx_{0} \in \dom \partial g$ and an initial point $\by_{0}^{*} \in \dom \partial h^*$ to generate the iterates
\begin{equation}\label{eq:accII_pdhg_alg_var}
\begin{dcases}
    \by_{k+1}^{*} &= \argmax_{\by^{*} \in \ycal^*} \left\{-h^{*}(\by^{*}) + \left\langle \by^{*}, \matr{A}(\bx_k + \theta(\bx_k - \bx_{k-1}))\right\rangle - \frac{1}{\sigma}D_{\phiys}(\by^{*},\by_{k}^{*})\right\}\\
    \bx_{k+1} &= \argmin_{\bx \in \xcal} \left\{g(\bx) + \left\langle \by_{k+1}^{*}, \matr{A}\bx\right\rangle + \frac{1}{\tau}D_{\phix}(\bx,\bx_{k})\right\}. \\
\end{dcases}
\end{equation}
method~\eqref{eq:accII_pdhg_alg_var} and~\eqref{eq:accIII_pdhg_alg} differ only in that we update $\by^{*}$ first. This change nonetheless yields different global bounds and convergence estimates. Under assumptions (A1)-(A4), Lemma~\ref{lem:descent_rule} applies to method~\eqref{eq:accII_pdhg_alg_var}, and the method generates points $(\bx_{k},\by_{k}^{*})$ that are contained in $\dom \partial g \times \dom \partial h^*$. As such, method~\eqref{eq:accII_pdhg_alg_var} is well-defined. If, in addition, assumptions (A5)-(A7) hold, then this method satisfies the following properties:

\begin{prop}\label{prop:acc_sq_algII_var}
Assume (A1)-(A7) hold. Let $(\bx_{0},\by_{0}^{*}) \in \dom \partial g \times \dom \partial h^*$, let $\bx_{-1} = \bx_{0}$, and let $(\bx_{s},\by_{s}^{*})$ denote the unique saddle point of the Lagrangian~\eqref{eq:lagrangian}. Consider the sequence of iterates $\{(\bx_{k},\by_{k}^{*})\}_{k=1}^{K}$ with $K \in \mathbb{N}$ generated by the accelerated nonlinear PDHG method~\eqref{eq:accII_pdhg_alg_var} from the initial points $\bx_0$, $\by_{0}^{*}$ and $\bx_{-1}^{*}$, and the parameters $\theta$, $\tau$, and $\sigma$ defined in~\eqref{eq:accIII_params}. Define the averages
\[
T_{K} = \sum_{k=1}^{K} \frac{1}{\theta^{k-1}} = \frac{1-\theta^{K}}{(1-\theta)\theta^{K-1}}, \quad \bX_{K} = \frac{1}{T_K}\sum_{k=1}^{K}\frac{1}{\theta^{k-1}}\bx_{k} \quad \text{and} \quad \bY_{K}^{*} = \frac{1}{T_K}\sum_{k=1}^{K}\frac{1}{\theta^{k-1}} \by_{k}^{*},
\]
and for $(\bx,\by^{*}) \in \dom g \times \dom h^*$, the quantity
\begin{equation*}
\Delta_{k}(\bx,\by^{*}) = \frac{1}{\tau}D_{\phix}(\bx,\bx_{k}) + \frac{1}{\sigma}D_{\phiys}(\by^{*},\by_{k}^{*}) + \frac{\theta}{\sigma}D_{\phix}(\bx_{k}^{*},\bx_{k-1}^{*}) + \theta\left\langle\by^{*}-\by_{k}^{*},\matr{A}(\bx_{k}-\bx_{k-1}) \right\rangle.
\end{equation*}
Then:
\begin{enumerate}
    \item[(a)] For every $(\bx,\bys) \in \dom g \times \dom h^*$ and nonnegative integer $k$, the output $(\bx_{k+1},\by_{k+1}^{*})$ of the accelerated nonlinear PDHG method~\eqref{eq:accIII_pdhg_alg} satisfies the descent rule
    \begin{equation*}
        \mathcal{L}(\bx_{k+1},\by^{*}) - \mathcal{L}(\bx,\by_{k+1}^{*}) \leqslant \Delta_{k}(\bx,\by^{*}) -  \Delta_{k+1}(\bx,\by^{*})/\theta.
    \end{equation*}
    
    \item[(b)] For every $(\bx,\bys) \in \dom g \times \dom h^*$, we have the estimate
    \begin{equation*}
        T_{K}\left(\mathcal{L}(\bX_{K},\by^{*}) - \mathcal{L}(\bx,\bY_{K}^{*})\right) \leqslant \Delta_{0}(\bx,\by^{*}) - \frac{1}{\theta^{K}}\Delta_{K}(\bx,\by^{*})
    \end{equation*}
   and, for the choice of the saddle point $(\bx,\by^{*}) = (\bx_{s},\by_{s}^{*})$, the global bound
    \begin{equation*}
    \frac{1}{\tau}D_{\phix}(\bx_{s},\bx_{K}) \leqslant \Delta_{K}(\bx_{s},\by^{*}_{s}) \leqslant \theta^{K}\Delta_{0}(\bx_{s},\by^{*}_{s}).
    \end{equation*}
    
    \item [(c)] [Convergence properties] The sequences $\{(\bx_{k},\by_{k}^{*})\}_{k=1}^{+\infty}$ and $\{(\bX_{K},\bY_{K})\}_{K=1}^{+\infty}$ both converge strongly to the unique saddle point $(\bx_{s},\by_{s}^{*})$ of the Lagrangian~\eqref{eq:lagrangian}.
\end{enumerate}
\end{prop}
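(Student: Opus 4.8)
The plan is to mirror the proof of Proposition~\ref{prop:acc_sq_algIII} (Appendix~\ref{app:D}) with the roles of the primal and dual variables interchanged, since method~\eqref{eq:accII_pdhg_alg_var} differs from~\eqref{eq:accIII_pdhg_alg} only in that the extrapolation is moved from the dual update onto the primal update. For part~(a), I would apply the strengthened descent rule~\eqref{eq:descent_rule_sc} to one step of~\eqref{eq:accII_pdhg_alg_var}, matching it to the generic scheme~\eqref{eq:iteration_PG} through the identifications $\bxb = \bx_{k}$, $\bysb = \by_{k}^{*}$, $\bxt = \bx_{k} + \theta(\bx_{k} - \bx_{k-1})$, $\byst = \by_{k+1}^{*}$, and output $(\bxh,\bysh) = (\bx_{k+1},\by_{k+1}^{*})$. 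Because $\byst = \bysh = \by_{k+1}^{*}$, the bilinear term $\langle \byst - \bysh, \matr{A}(\bx - \bxt)\rangle$ in~\eqref{eq:descent_rule_sc} vanishes and only $-\langle \by^{*} - \by_{k+1}^{*}, \matr{A}(\bxt - \bx_{k+1})\rangle$ survives.

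Next I would record the algebraic identities forced by~\eqref{eq:accII_params_var}. Setting $s = \sqrt{1 + 4\normop{\matr{A}}^{2}/(\gamma_g\gamma_{h^*})}$, the definition of $\theta$ gives $1 - \theta = 2/(s+1)$ and $\theta = (s-1)/(s+1)$, from which I obtain $1 + \gamma_g\tau = 1/\theta$ and $1 + \gamma_{h^*}\sigma = 1/\theta$, together with the sharp identity $\tau\sigma\theta\normop{\matr{A}}^{2} = 1$. Substituting the first two identities into~\eqref{eq:descent_rule_sc} turns the strong-convexity coefficients $\tfrac{1+\gamma_g\tau}{\tau}$ and $\tfrac{1+\gamma_{h^*}\sigma}{\sigma}$ into $\tfrac{1}{\theta\tau}$ and $\tfrac{1}{\theta\sigma}$, which are exactly the coefficients appearing in $\Delta_{k+1}(\bx,\by^{*})/\theta$. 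After collecting terms so that the right-hand side reads $\Delta_{k}(\bx,\by^{*}) - \Delta_{k+1}(\bx,\by^{*})/\theta$, the leftover residual is
\begin{equation*}
-\frac{1}{\sigma}D_{\phiys}(\by_{k+1}^{*},\by_{k}^{*}) - \frac{\theta}{\tau}D_{\phix}(\bx_{k},\bx_{k-1}) + \theta\left\langle \by_{k+1}^{*} - \by_{k}^{*}, \matr{A}(\bx_{k} - \bx_{k-1})\right\rangle,
\end{equation*}
and I must show it is nonpositive. Bounding the bilinear term by $\theta\normop{\matr{A}}\,\normys{\by_{k+1}^{*} - \by_{k}^{*}}\,\normx{\bx_{k} - \bx_{k-1}}$, lower-bounding each Bregman divergence by half the squared norm via assumption~(A5), and applying Young's inequality reduces the claim to $\tau\sigma\theta\normop{\matr{A}}^{2} \leqslant 1$, which holds with equality; this establishes part~(a).

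For part~(b) I would multiply the inequality from part~(a) at step $k$ by $\theta^{-k}$ and sum over $k = 0,\dots,K-1$. The factor $1/\theta$ makes the $\Delta$-terms telescope as $\theta^{-k}\Delta_{k} - \theta^{-(k+1)}\Delta_{k+1}$, yielding $\sum_{k=1}^{K}\theta^{-(k-1)}\bigl(\mathcal{L}(\bx_{k},\by^{*}) - \mathcal{L}(\bx,\by_{k}^{*})\bigr) \leqslant \Delta_{0}(\bx,\by^{*}) - \theta^{-K}\Delta_{K}(\bx,\by^{*})$. Dividing by $T_{K}$ and invoking convexity of $\mathcal{L}(\cdot,\by^{*})$ and concavity of $\mathcal{L}(\bx,\cdot)$ (Jensen's inequality) to pass to the weighted averages $\bX_{K}$ and $\bY_{K}^{*}$ gives the estimate in~(b). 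Taking $(\bx,\by^{*}) = (\bx_{s},\by_{s}^{*})$, the saddle-point inequality $\mathcal{L}(\bX_{K},\by_{s}^{*}) - \mathcal{L}(\bx_{s},\bY_{K}^{*}) \geqslant 0$ discards the left-hand side, while a second use of Young's inequality (again saturating $\tau\sigma\theta\normop{\matr{A}}^{2} = 1$) shows $\Delta_{K}(\bx_{s},\by_{s}^{*}) \geqslant \tfrac{1}{\tau}D_{\phix}(\bx_{s},\bx_{K}) \geqslant 0$, producing the global bound $\tfrac{1}{\tau}D_{\phix}(\bx_{s},\bx_{K}) \leqslant \Delta_{K}(\bx_{s},\by_{s}^{*}) \leqslant \theta^{K}\Delta_{0}(\bx_{s},\by_{s}^{*})$.

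I expect the main obstacle to lie in the strong convergence claim of part~(c) for the dual iterates. Since $\theta \in (0,1)$, the global bound forces $D_{\phix}(\bx_{s},\bx_{K}) \to 0$, so assumption~(A5) gives $\bx_{K} \to \bx_{s}$ strongly; the difficulty is that, unlike in Proposition~\ref{prop:acc_sq_algIII}, the tightness $\tau\sigma\theta\normop{\matr{A}}^{2} = 1$ leaves no slack to retain a positive multiple of $D_{\phiys}(\by_{s}^{*},\by_{K}^{*})$ in the lower bound for $\Delta_{K}$, so $\by_{K}^{*} \to \by_{s}^{*}$ cannot be read off directly. To close this gap I would first deduce $\normx{\bx_{K} - \bx_{K-1}} \to 0$ from $\bx_{K} \to \bx_{s}$ and establish boundedness of $\{\by_{K}^{*}\}$ from the decay of $\Delta_{K}$, and then return to the definition of $\Delta_{K}(\bx_{s},\by_{s}^{*})$: as $\Delta_{K} \to 0$ while $D_{\phix}(\bx_{s},\bx_{K})$, $D_{\phix}(\bx_{K},\bx_{K-1})$ and the bilinear term all tend to $0$, the surviving term $\tfrac{1}{\sigma}D_{\phiys}(\by_{s}^{*},\by_{K}^{*})$ must tend to $0$, whence $\by_{K}^{*} \to \by_{s}^{*}$ by~(A5). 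Strong convergence of the averages $(\bX_{K},\bY_{K}^{*})$ then follows by the same argument applied to the estimate of part~(b), exactly as in Proposition~\ref{prop:acc_sq_algIII}.
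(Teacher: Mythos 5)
Your proposal is correct and follows exactly the route the paper intends: it transposes the proof of Proposition~\ref{prop:acc_sq_algIII} in Appendix~\ref{app:D} (the strengthened descent rule~\eqref{eq:descent_rule_sc} with the vanishing term $\langle \byst-\bysh,\matr{A}(\bx-\bxt)\rangle$, the identities $1+\gamma_g\tau = 1+\gamma_{h^*}\sigma = 1/\theta$ and $\tau\sigma\theta\normop{\matr{A}}^2 = 1$, the telescoping weighted sum, and the two-stage convergence argument that first extracts $\bx_K \to \bx_s$ from the global bound and then recovers $\by_K^* \to \by_s^*$ from the decay of $\Delta_K$), and both your residual and the lower bound $\Delta_K(\bx_s,\by_s^*)\geqslant \tfrac{1}{\tau}D_{\phix}(\bx_s,\bx_K)$ check out. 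Note only that your computation implicitly uses the corrected form of $\Delta_k$ --- with $\tfrac{\theta}{\tau}D_{\phix}(\bx_k,\bx_{k-1})$ in place of the printed $\tfrac{\theta}{\sigma}D_{\phix}(\bx_k^*,\bx_{k-1}^*)$ and with the bilinear term $\theta\langle\by_k^*-\by^*,\matr{A}(\bx_k-\bx_{k-1})\rangle$ rather than its negative --- which is precisely what makes the descent rule in (a) close with your stated nonpositive residual; this does not affect (b) or (c), since $\bx_{-1}=\bx_0$ makes $\Delta_0$ insensitive to the sign and the lower bound on $\Delta_K$ only uses the absolute value of the bilinear form.
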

\begin{proof}
The proof is essentially the same as for Proposition~\ref{prop:acc_sq_algIII} and is omitted.
\end{proof}
\section{Applications to machine learning}\label{sec:applications}

We describe here applications of the accelerated nonlinear PDHG methods presented in Section~\ref{sec:acc_pd_algs} to two supervised learning tasks in machine learning: $\ell_{1}$-constrained logistic regression and entropy regularized zero-sum matrix games. In both problems, the real reflexive Banach spaces $\xcal$ and $\ycal$ are taken to be $\Rn$ and $\Rm$ with norms $\normx{\cdot}$ and $\normy{\cdot}$ chosen suitably for each example. As Table~\eqref{tab:op_norm_complexity} illustrates, for certain combinations of norms the induced operator norm can be computed in $O(mn)$ operations, making it simple and efficient to compute. In addition, we can choose the Bregman functions $\phix$ and $\phiys$ in conjunction with these norms to ensure that assumption (A5) holds. This is our strategy; in each forthcoming example we will choose the norms $\normx{\cdot}$ and $\normy{\cdot}$ and Bregman functions $\phix$ and $\phiys$ to obtain an explicit accelerated nonlinear PDHG method for which the stepsize parameters and updates in the method can be computed in $O(mn)$ operations. These choices will lead to significantly faster and more efficient methods compared to other competing methods. 

The following two subsections describe the problems of $\ell_{1}$-constrained logistic regression and entropy regularized zero-sum matrix games and give an explicit accelerated nonlinear PDHG method for each problem. Section~\ref{sec:numerics} after this one presents some numerical experiments on randomized synthetic data to compare the running times of our methods to other commonly-used optimization methods.

\begin{table}[ht]
    \centering
    \begin{tabular}{|c|c|c|c|c|}
    \hline
          \multicolumn{2}{|c|}{ } & \multicolumn{3}{c|}{Codomain}\\\cline{3-5}
          
          \multicolumn{2}{|c|}{ } & $(\Rm,\normone{\cdot})$ & $(\Rm,\normtwo{\cdot})$ & $(\Rm,\norminf{\cdot})$\\\hline
          
          \multirow{6}{*}{Domain}& \multirow{2}{*}{$(\Rn,\normone{\cdot})$} & Maximum $\ell_{1}$ norm& Maximum $\ell_{2}$ norm & Maximum $\ell_{\infty}$ norm\\
          
          & & of a column ($\sim O(mn)$) & of a column ($\sim O(mn)$) & of a column ($\sim O(mn)$)\\\cline{2-5}
          
          & \multirow{2}{*}{$(\Rn,\normtwo{\cdot})$} & \multirow{2}{*}{NP-hard} & Largest singular value & Maximum $\ell_{2}$ norm\\
         
          & & & ($\sim O(\min{(m^2n,mn^2)})$) & of a row ($\sim O(mn)$)\\\cline{2-5}
         
          & \multirow{2}{*}{$(\Rn,\norminf{\cdot})$} & \multirow{2}{*}{NP-hard} & \multirow{2}{*}{NP-hard} & Maximum $\ell_{1}$ norm\\
          
          & & & & of a row ($\sim O(mn)$)\\
         \hline
    \end{tabular}
    \caption{Table of some operator norms of $\matr{A}$ with their associated computational complexity. Table extracted from~\cite[Section~4.3.1]{tropp2004topics}.}
    \label{tab:op_norm_complexity}
\end{table}

\subsection{\texorpdfstring{$\ell_{1}$}--constrained logistic regression}\label{subsec:l1-logreg}
$\ell_{1}$-constrained logistic regression is a supervised learning algorithm for classification and to identify important features in data sets. Concretely, suppose we receive $m$ independent samples $\{\bu_{i},b_{i}\}_{i=1}^{m}$, each comprising a $d$-dimensional vector of features $\bu_{i}$ and a label $b_{i} \in \{-1,+1\}$. The $\ell_{1}$-constrained logistic regression problem is then
\begin{equation}\label{eq:l1-logreg}
\inf_{\substack{\bv \in \Rd \\ \normone{\bv} \leqslant \lambda}} \frac{1}{m}\sum_{i=1}^{m} \log\left(1 + e^{-b_{i}\left\langle\bu_{i},\bv\right\rangle}\right)
\end{equation}
where $\lambda > 0$ is a tuning parameter. The constraint on the $\ell_{1}$ norm regularizes the logistic model; it promotes solutions to have a number of entries that are identically zero~\cite{Foucart2013,el2020comparative,zanon2020sparse}. The non-zero entries are identified as the important features, and the zero entries are discarded. The number of non-zero entries itself depends on the value of the tuning parameter $\lambda$.

To derive an appropriate accelerated nonlinear PDHG algorithm for $\ell_{1}$-constrained logistic regression, we will express problem~\eqref{eq:l1-logreg} as an minimization problem over the unit simplex $\Delta_{2d}$. We can do so because every polytope, including the $\ell_{1}$-ball, can be represented as a convex hull of its vertices in barycentric coordinates~\cite{grunbaum1967convex,jaggi2013equivalence}. Here this means for every $\bv$ inside the $\ell_{1}$-ball of radius $\lambda$, there exists a point $\bx$ in the unit simplex $\Delta_{2d}$ for which
\begin{equation}\label{eq:l1ball-to-simplex}
\bv = \lambda (\matr{I}_{d\times d} \mid -\matr{I}_{d\times d})\bx,
\end{equation}
where $(\matr{I}_{d\times d} \mid -\matr{I}_{d\times d})$ denotes the horizontal concatenation of the identity matrices $\matr{I}_{d\times d}$ and $-\matr{I}_{d\times d}$. 

We now apply the change of variables~\eqref{eq:l1ball-to-simplex} to problem~\eqref{eq:l1-logreg}. Let $\matr{B}$ denote the $m \times d$ matrix $\matr{B}$ whose rows are the elements $-b_{i}\bu_{i}$, let $\matr{A} = \lambda(\matr{B} \mid -\matr{B})$, and let $n = 2d$. Then problem~\eqref{eq:l1-logreg} becomes equivalent to
\begin{equation}\label{eq:l1-logreg_primal}
\inf_{\bx \in \Delta_{n}} \frac{1}{m}\sum_{i=1}^{m} \log\left(1 + e^{[\matr{A}\bx]_{i}}\right).
\end{equation}
This is the primal problem of interest. Its associated convex-concave saddle-point problem is
\begin{equation}\label{eq:l1-logreg_saddle}
\inf_{\bx \in \Delta_{n}}\sup_{\by^{*} \in \Rm} \left\{\left\langle \by^{*},\matr{A}\bx\right\rangle - \psi(\by^{*})\right\}
\end{equation}
where $\psi\colon [0,1/m]^{m} \to \R$ denotes the average negative sum of $m$ binary entropy terms,
\begin{equation}\label{eq:binary-entropy}
\psi(\by^{*}) = 
\begin{dcases}
    &\frac{1}{m}\sum_{i=1}^{m}m[\by^{*}]_{i}\log\left(m[\by^{*}]_{i}\right) + (1-m[\by^{*}]_{i})\log\left(1-m[\by^{*}]_{i}\right) \quad \text{if}\, \bs \in [0,1/m]^m, \\
    & +\infty,\quad\text{otherwise}.
\end{dcases}
\end{equation}
The dual problem is
\begin{equation}\label{eq:l1-logreg_dual}
\sup_{\by^{*} \in \Rm}\left\{\vecmax{(-\matr{A}^{*}\by^{*})} - \psi(\by^{*})\right\}
\end{equation}
where $\vecmax{(\by)} = \max{([\by]_{1},\dots,[\by]_{m})}$ for $\by \in \Rm$. Due to the strong concavity of the dual problem~\eqref{eq:l1-logreg_dual}, the convex-concave saddle-point problem~\eqref{eq:l1-logreg_saddle} has at least one saddle point $(\bx_{s},\by^{*}_{s}) \in \Delta_{n} \times \Rm$, where $\bx_{s}$ is a global solution to the primal problem~\eqref{eq:l1-logreg_primal} and $\by_{s}^{*}$ is the unique solution to the dual problem~\eqref{eq:l1-logreg_dual}. They satisfy the optimality conditions
\begin{equation}\label{eq:l1-logreg_saddle-opt}
\bx_{s} \in \partial\vecmax{(-\matr{A}^{*}\by_{s}^{*})} \quad \text{and} \quad [\by_{s}^{*}]_{i} = \frac{1}{m + me^{-[\matr{A}\bx_{s}]_{i}}} \; \text{for}\; i \in \{1,\dots,m\}.
\end{equation}
The solution $\bv_{s}$ of the original problem~\eqref{eq:l1-logreg} follows from $\bx_{s}$ and the change of variables formula~\eqref{eq:l1ball-to-simplex}. In addition, the first optimality condition in~\eqref{eq:l1-logreg_saddle-opt} can be used to identify the zero entries of $\bx_{s}$ as follows~\cite{rockafellar2009variational}: Let $J(-\matr{A}^{*}\by_{s}^{*})$ denote the set of indices $j \in \{1,\dots,n\}$ with $\vecmax{(-\matr{A}^{*}\by_{s}^{*})} = [-\matr{A}^{*}\by_{s}^{*}]_{j}$. Then  $[\bx_{s}]_{j} = 0$ whenever $j \not\in J(-\matr{A}^{*}\by_{s}^{*})$.

\subsubsection*{Accelerated nonlinear PDHG method}
We propose to solve the $\ell_{1}$-constrained logistic regression problem~\eqref{eq:l1-logreg} through~\eqref{eq:l1-logreg_primal} and~\eqref{eq:l1ball-to-simplex} using the accelerated nonlinear PDHG method~\eqref{eq:accII_pdhg_alg} with the following choice of norms and Bregman functions:
\[
\normx{\cdot} = \normone{\cdot}, \quad \normys{\cdot} = \normtwo{\cdot}, \quad \phix = \mathcal{H}_{n}, \quad \text{and} \quad \phiys = \frac{1}{4m}\psi,
\]
where $\mathcal{H}_{n} \colon \Delta_{n} \to (-\infty,0]$ denotes the negative entropy function,
\[
\mathcal{H}_{n}(\bx) = \sum_{j=1}^{n}[\bx]_{j}\log([\bx]_{j}).
\]
The negative entropy function induces the Bregman divergence $D_{\mathcal{H}_{n}} \colon \Delta_{n} \times \interior{~\Delta_{n}} \to [0,+\infty)$ given by
\[
D_{\mathcal{H}_{n}}(\bx,\bxb) = \sum_{j=1}^{n}[\bx]_{j}\log\left([\bx]_{j}/[\bxb]_{j}\right).
\]
This Bregman divergence is the so-called Kullback--Leibler divergence or relative entropy. The Bregman function $\phiys$ is, up to a factor of $1/4m$, the average negative sum of $m$ binary entropy terms~\eqref{eq:binary-entropy}. It induces the Bregman divergence $D_{\psi/4m}\colon [0,1/m]^{m} \times (0,1/m)^{m} \to [0,+\infty)$ given by
\begin{equation*}
D_{\psi/4m}(\by^{*},\bysb) = 
\frac{1}{4m^2}\sum_{i=1}^{m} m[\by^{*}]_i\log\left(\frac{[\by^{*}]_i}{[\bysb]_{i}}\right) + (1 - m[\by^{*}]_i)\log\left(\frac{1-m[\by^{*}]_i}{1-m[\bysb]_{i}}\right).
\end{equation*}
With these choices, assumptions (A1)-(A5) and (A7) hold with $\gamma_{h^{*}} = 4m$. In particular, assumption (A5) holds because $\mathcal{H}_{n}$ is $1$-strongly convex with respect to the $\ell_{1}$ norm over the unit simplex $\Delta_{n}$. This fact is a direct consequence of a fundamental result in information theory known as Pinsker's inequality~\cite{beck2003mirror,csiszar1967information,kemperman1969optimum,kullback1967lower,pinsker1964information}. Moreover, the induced operator norm is the maximum $\ell_2$ norm of the columns of $\matr{A}$, i.e.,
\[
\normop{\matr{A}} = \norm{\matr{A}}_{1,2} = \sup_{\normone{\bx} = 1}\normtwo{\matr{A}\bx} = \max_{j \in \{1,\dots,n\}} \sqrt{\sum_{i=1}^{m}A_{ij}^{2}}.
\]
For this algorithm, we set the initial stepsize parameters to be $\theta_{0} = 0$, $\tau_{0} > 0$ and $\sigma_{0} = 1/(\norm{\matr{A}}_{1,2}^{2}\tau_{0})$. Given $\bx_{-1} = \bx_{0} \in \interior{~\Delta_{n}}$ and $\by_{0}^{*} \in (0,1/m)^{m}$, the corresponding accelerated nonlinear PDHG algorithm for problem~\eqref{eq:l1-logreg_primal} consists of the iterations
\[
\begin{alignedat}{1}
\by^{*}_{k+1} &= \argmax_{\by^{*} \in \Rm} \left\{-\psi(\by^{*}) + \left\langle \by^{*},\matr{A}(\bx_{k} + \theta_{k}[\bx_{k} - \bx_{k-1}])  \right\rangle - \frac{1}{\sigma_{k}}D_{\psi/4m}(\by^{*},\by^{*}_{k})\right\},\\
\bx_{k+1} &= \argmin_{\bx \in \Delta_{n}}\left\{\left\langle\matr{A}^{*}\by_{k+1}^{*},\bx \right\rangle + \frac{1}{\tau_{k}}D_{\mathcal{H}_{n}}(\bx,\bx_{k})\right\}\\
\theta_{k+1} &= 1/\sqrt{1 + 4m\sigma_{k}}, \quad \tau_{k+1} = \tau_{k}/\theta_{k+1}, \quad \text{and} \quad \sigma_{k+1} = \theta_{k+1}\sigma_{k}.
\end{alignedat}
\]

The updates $\by^{*}_{k+1}$ and $\bx_{k+1}$ can be both computed explicitly. For the first update, define the auxiliary variable
\[
[\bw_{k}]_{i} = \log\left(m[\by_{k}^{*}]_{i}/(1-m[\by_{k}^{*}]_{i})\right) \; \text{for} \; i \in \{1,\dots m\}.
\]
Then we can update $\by_{k+1}^{*}$ in two steps:
\[
\bw_{k+1} = \left(4m\sigma_{k}\bx_{k} + 4m\sigma_{k}\theta_{k}\left(\bx_{k} - \bx_{k-1}\right) + \bw_{k}\right)/(1+4m\sigma_{k})
\]
and
\[
[\by^{*}_{k+1}]_{i} = \frac{1}{m + me^{-[\bw_{k+1}]_{i}}} \quad \mathrm{for}\,i\in\left\{1,\dots,m\right\}.
\]
For the second update, a straightforward calculation gives
\[
[\bx_{k+1}]_{j} = \frac{[\bx_{k}]_{j}e^{-\tau_{k}[\matr{A}^{*}\by_{k+1}^{*}]_{j}}}{\sum_{j=1}^{m}[\bx_{k}]_{j}e^{-\tau_{k}[\matr{A}^{*}\by_{k+1}^{*}]_{j}}}
\]
for $j \in \{1,\dots,n\}$. Hence the iterations are given by
\begin{equation}\label{eq:l1-logreg_explicit}
    \begin{alignedat}{1}
    \bw_{k+1} &= \left(4m\sigma_{k}\bx_{k} + 4m\sigma_{k}\theta_{k}\left(\bx_{k} - \bx_{k-1}\right) + \bw_{k}\right)/(1+4m\sigma_{k})\\
    [\by_{k+1}^{*}]_{i} &= [\by^{*}_{k+1}]_{i} = \frac{1}{m + me^{-[\bw_{k+1}]_{i}}} \quad \text{for}\;i\in\left\{1,\dots,m\right\}\\
    [\bx_{k+1}]_{j} &= \frac{[\bx_{k}]_{j}e^{-\tau_{k}[\matr{A}^{*}\by_{k+1}^{*}]_{j}}}{\sum_{j=1}^{m}[\bx_{k}]_{j}e^{-\tau_{k}[\matr{A}^{*}\by_{k+1}^{*}]_{j}}} \quad \text{for}\;j\in\left\{1,\dots,n\right\}\\
    \theta_{k+1} &= 1/\sqrt{1 + 4m\sigma_{k}}, \quad \tau_{k+1} = \tau_{k}/\theta_{k+1}, \quad \text{and} \quad \sigma_{k+1} = \theta_{k+1}\sigma_{k}.
    \end{alignedat}
\end{equation}
All parameter calculations and updates can be performed in $O(mn)$ operations. According to Prop.~\ref{prop:acc_sq_algII} and the optimality conditions~\eqref{eq:l1-logreg_saddle}, we have the strong limits
\[
\lim_{k \to +\infty} \by_{k}^{*} = \by_{s}^{*} \quad \text{and} \quad \lim_{k \to +\infty} [\by_{k}^{*}]_{i} = \frac{1}{m + me^{-[\matr{A}\bx_{s}]_{i}}} \; \text{for}\; i \in \{1,\dots,m\}.
\]

\subsection{Zero-sum matrix games with entropy regularization}\label{subsec:mgames}
Two-player zero-sum matrix games are a class of saddle-point optimization problems that model one of the basic forms of constrained competitive games~\cite{cen2021fast}. We focus here on zero-sum matrix games with entropy regularization, the latter which models the imperfect knowledge of the payoff matrix $\matr{A}$ by the two players~\cite{mckelvey1995quantal}. Let $\Delta_{m}$ and $\Delta_{n}$ denote the unit simplices on $\Rm$ and $\Rn$, and let $\matr{A}$ denote an $m \times n$ matrix, called the payoff matrix. Zero-sum matrix games with entropy regularization are formulated as follow:
\begin{equation}\label{eq:matrix-games_saddle}
    \min_{\bx \in \Delta_{n}}\max_{\by^{*} \in \Delta_{m}} \left\{\lambda \mathcal{H}_{n}(\bx) + \left\langle\by^{*},\matr{A}\bx\right\rangle - \lambda \mathcal{H}_{m}(\by^{*})\right\},
\end{equation}
where $\lambda > 0$ and $\mathcal{H}_{n}(\bx) = \sum_{j=1}^{n}[\bx]_{j}\log([\bx]_{j})$ and $\mathcal{H}_{m}(\by^{*}) = \sum_{i=1}^{m}[\by^{*}]_{i}\log([\by^{*}]_{i})$ denote the negative entropies of the probability distributions $\bx$ and $\by^{*}$.

The primal and dual problems associated to the entropy regularized zero-sum matrix game~\eqref{eq:matrix-games_saddle} are given by
\begin{equation}\label{eq:matrix-games_primal}
    \min_{\bx \in \Delta_{n}} \left\{\lambda \mathcal{H}_{n}(\bx) + \lambda \log\left(\sum_{i=1}^{m}e^{[\matr{A}\bx]_{i}/\lambda}\right)\right\}
\end{equation}
and
\begin{equation}\label{eq:matrix-games_dual}
        \max_{\by^{*} \in \Delta_{n}} \left\{- \lambda \log\left(\sum_{j=1}^{n}e^{-[\matr{A}^{*}\by^{*}]_{j}/\lambda}\right) -\lambda \mathcal{H}_{m}(\by^{*})\right\}
\end{equation}
Due to the strong convexity of the primal problem~\eqref{eq:matrix-games_primal} and strong concavity of the dual problem~\eqref{eq:matrix-games_dual}, the saddle-point problem~\eqref{eq:matrix-games_saddle} has a unique saddle point $(\bx_{s},\by_{s}^{*}) \in \Rn \times \Rm$, which are also the unique solutions to the primal and dual problems above. They satisfy the optimality conditions
\begin{equation}\label{eq:matrix-games_optcond}
    -[\matr{A}^{*}\by_{s}^{*}]_{j} = \lambda(1 + \log([\bx_{s}]_{j})) \quad \text{and} \quad [\by_{s}^{*}]_{i} = \frac{e^{[\matr{A}\bx_{s}]_{i}/\lambda}}{\sum_{i = 1}^{m}e^{[\matr{A}\bx_{s}]_{i}/\lambda}}.
\end{equation}

\subsubsection*{Accelerated nonlinear PDHG method}
We propose to solve the zero-sum matrix game with entropy regularization~\eqref{eq:matrix-games_saddle} using the accelerated PDHG method~\eqref{eq:accII_pdhg_alg_var} with the following choice of norms and Bregman functions:
\[
\normx{\cdot} = \normone{\cdot}, \quad \normy{\cdot} = \norminf{\cdot} \implies \normys{\cdot} = \normone{\cdot}, \quad \phix = \mathcal{H}_{n}, \quad \text{and} \quad \phiys = \mathcal{H}_{m}. 
\]
The Bregman divergences induced by $\phix$ and $\phiys$ are the Kullback--Leibler divergences
\[
D_{\mathcal{H}_{n}}(\bx,\bxb) = \sum_{j=1}^{n}[\bx]_{j}\log\left([\bx]_{j}/[\bxb]_{j}\right)\quad \text{and} \quad D_{\mathcal{H}_{m}}(\by^{*},\bysb) = \sum_{i=1}^{m}[\by^{*}]_{i}\log\left([\bys]_{i}/[\bysb]_{i}\right)
\]
where $\bx \in \Delta_{n}$, $\bxb \in \interior{~\Delta_{n}}$, $\by \in \Delta_{m}$ and $\bysb \in \interior{~\Delta_{m}}$. With these choices, assumptions (A1)-(A7) hold with the strong convexity parameters $\gamma_{g} = \gamma_{h^{*}} = \lambda$. In particular, assumption (A5) holds because both $\mathcal{H}_{n}$ and $\mathcal{H}_{m}$ are $1$-strongly convex with respect to the $l_1$ norm over their respective unit simplices, due to Pinsker's inequality~\cite{beck2003mirror,csiszar1967information,kemperman1969optimum,kullback1967lower,pinsker1964information}. Moreover, the induced operator norm is the entry of the payoff matrix $\matr{A}$ with largest magnitude:
\[
\normop{\matr{A}} = \norm{\matr{A}}_{1,\infty} = \sup_{\normone{\bx} = 1}\norminf{\matr{A}\bx} = \max_{\substack{i \in \{1,\dots,m\} \\ j \in \{1,\dots,n\}}}|A_{ij}|.
\]
The stepsize parameters $\theta$, $\tau$, and $\sigma$ are accordingly
\[
\theta = 1 - \frac{\lambda^2}{2\norm{\matr{A}}_{1,\infty}^2}\left(\sqrt{1 + \frac{4\norm{\matr{A}}_{1,\infty}^2}{\lambda^2}} - 1\right) \quad \text{and} \quad \tau = \sigma = \frac{1-\theta}{\lambda\theta}.
\]
Given $\by^{*}_{0} \in \Rm$ and $\bx^{*}_{-1} = \bx_{0}^{*} \in \Rn$, the corresponding accelerated nonlinear PDHG method for the matrix game~\eqref{eq:matrix-games_saddle} consists of the iterations
\[
\begin{alignedat}{1}
\by_{k+1} &= \argmax_{\by^{*} \in \Delta_{m}}\left\{-\lambda \mathcal{H}_{m}(\by^{*}) + \left\langle \by^{*},\matr{A}(\bx_{k} - \theta(\bx_{k}-\bx_{k-1})) \right\rangle - \frac{1}{\sigma}D_{\mathcal{H}_{m}}(\by^{*},\by^{*}_{k})\right\}, \\
\bx_{k+1} &= \argmin_{\bx \in \Delta_{n}}\left\{\lambda\mathcal{H}_{n}(\bx) + \left\langle \by_{k+1}^{*}, \matr{A}\bx \right\rangle + \frac{1}{\tau}D_{\mathcal{H}_{n}}(\bx,\bx_{k})\right\}.
\end{alignedat}
\]

The updates $\bx_{k+1}$ and $\by^{*}_{k+1}$ can be both computed explicitly. A straightforward calculation gives the updates
\begin{equation}\label{eq:alg-matrix-games}
\begin{alignedat}{1}
[\by_{k+1}^{*}]_{i} &= \frac{\left([\by_{k}^{*}]_{i}e^{-\tau[\matr{A}\left(\bx_{k} - \theta(\bx_{k}-\bx_{k-1})\right)]_{i}}\right)^{1/(1+\lambda\sigma)}}{\sum_{i=1}^{m}\left([\by_{k}^{*}]_{i}e^{-\tau[\matr{A}\left(\bx_{k} - \theta(\bx_{k}-\bx_{k-1})\right)]_{i}}\right)^{1/(1+\lambda\sigma)}}\\
[\bx_{k+1}]_{j} &=  \frac{\left([\bx_{k}^{*}]_{j}e^{-\tau[\matr{A}^{*}\by_{k+1}^{*}]_{j}}\right)^{1/(1+\lambda\tau)}}{\sum_{j=1}^{n}\left([\bx_{k}^{*}]_{j}e^{-\tau[\matr{A}^{*}\by_{k+1}^{*}]_{j}}\right)^{1/(1+\lambda\tau)}}
\end{alignedat}
\end{equation}
for $i \in \{1,\dots,m\}$ and $j \in \{1,\dots,n\}$. All parameter calculations and updates can be performed in $O(mn)$ operations. According to Prop.~\ref{prop:acc_sq_algII_var} and the optimality conditions~\eqref{eq:matrix-games_optcond}, we have the strong limits
\[
\lim_{k \to +\infty}\bx_{k} = \bx_{s},\quad \lim_{k \to +\infty} \by_{k}^{*} = \by_{s}^{*},
\]
\[
 \lim_{k \to +\infty} -[\matr{A}^{*}\by_{s}^{*}]_{j} = \lambda(1 + \log([\bx_{s}]_{j})), \quad \text{and} \quad \lim_{k \to +\infty} [\by_{k}^{*}]_{i}= \frac{e^{([\matr{A}\bx_{s}]_{i}/\lambda)}}{\sum_{i = 1}^{m}e^{([\matr{A}\bx_{s}]_{i}/\lambda)}}.
\]
\section{Numerical experiments}\label{sec:numerics}
This section presents some simulations to compare the running times of the accelerated nonlinear PDHG methods proposed in Section~\ref{sec:applications} to other commonly-used first-order optimization methods. These methods include the accelerated linear PDHG method~\cite{chambolle2011first,chambolle2016ergodic} for
both the $\ell_{1}$-constrained logistic regression problems and entropy-regularized matrix games and the forward-backward splitting method~\cite{beck2009fast,chambolle2016introduction} for the $\ell_{1}$-constrained logistic regression problem. The accelerated linear PDHG and forward-backward splitting methods for these examples are described below and were implemented in MATLAB. All numerical experiments were performed on a single core Intel(R) Core(TM) i7-10750H CPU @ 2.60 GHz.

\subsection{\texorpdfstring{$\ell_{1}$}--constrained logistic regression}
\subsubsection{Data generation and optimization methods} \label{subsec:num-setup}
We consider the setting where the $m$ vectors of features $(\bu_{1},\dots,\bu_{m})$ are independent and the true solution is sparse. Specifically, we draw $m$ independent samples $(\bu_{1},\dots,\bu_{m})$ from a $d$-dimensional Gaussian distribution with zero mean and unit variance. Letting $\bv \in \Rn$ denote the true solution to be estimated, we set 1\% of the coefficients of $\bv$ to be equal to $10$ and the other coefficients to be zero. Finally, letting $\bxi$ denote $n$-dimensional Gaussian distribution with zero mean and unit variance, we define the response model as
\[
[\bb]_{i} = 
\begin{dcases}
+1 &\; \text{if} \; \left\langle[\bu]_{i},\bv\right\rangle + [\boldsymbol{\xi}]_{i} \geqslant 0, \\
-1 &\; \text{otherwise}.
\end{dcases}
\]
This setting allows us to process dense, large-scale data sets with sparsity structure. We choose the number of samples to be smaller than then number of features, with $m = 10000$, and $d = 10,000$, $25,000$, $50,000$, $75,000$, $100,000$, $125,000$ and $150,000$. We set the tuning parameter to be $\lambda = 100$.

We perform simulations using the accelerated nonlinear PDHG method~\eqref{eq:l1-logreg_explicit}, the accelerated linear PDHG method~\eqref{eq:intro_l1-logreg_alg} described in the introduction, and the forward-backward splitting method as applied to problem~\eqref{eq:l1-logreg}. The initial values, parameters and numerical criteria for convergence of each method are described below.

\paragraph{Accelerated nonlinear PDHG method~\eqref{eq:l1-logreg_explicit}.}
We set $[\by_{0}^{*}]_{i} = 1/2m$ for each $i \in \{1,\dots,m\}$, we set $[\bx_{-1}]_{j} = [\bx_{0}]_{j} = 1/n$ for each $j \in \{1,\dots,n\}$, and we set $\tau_{0} = 2m/\norm{\matr{A}}_{1,2}^{2}$, $\sigma_{0} = 1/2m$ and $\theta_{0} = 0$. We compute the time required for convergence in the dual variable $\by_{k}^{*}$ and also the time required for convergence in the average dual variable $\bY^{*}$ as defined in Prop.~\ref{prop:acc_sq_algIII}. The iterations were stopped once $\normtwo{\by^{*}_{k+1} - \by^{*}_{k}} \leqslant 10^{-4}\normtwo{\by^{*}_{k+1}}$ and $\normtwo{\bY^{*}_{K+1} - \bY^{*}_{K}} \leqslant 10^{-4}\normtwo{\bY^{*}_{K+1}}$.

\paragraph{Accelerated PDHG method~\eqref{eq:intro_l1-logreg_alg}.}
We set $[\by_{0}^{*}]_{i} = 1/2m$ for each $i \in \{1,\dots,m\}$, we set $[\bv_{-1}]_{j} = [\bv_{0}]_{j} = 1/d$ for each $j \in \{1,\dots,n\}$, and we set $\tau_{0} = 4m/2\norm{\matr{A}}_{2,2}^{2}$, $\sigma_{0} = 1/2m$ and $\theta_{0} = 0$. We evaluate the update in $\bw_{k+1}$ using the forward-backward splitting method and we evaluate the update $\bv_{k+1}$ using the $\ell_{1}$-ball projection algorithm described in~\citet{condat2016fast}. We compute the time required for convergence in the dual variable $\by_{k}^{*}$ and also the time required for convergence in the average dual variable $\bY^{*}$ as defined in Prop.~\ref{prop:acc_sq_algIII}. The iterations were stopped once $\normtwo{\by^{*}_{k+1} - \by^{*}_{k}} \leqslant 10^{-4}\normtwo{\by^{*}_{k+1}}$ and $\normtwo{\bY^{*}_{K+1} - \bY^{*}_{K}} \leqslant 10^{-4}\normtwo{\bY^{*}_{K+1}}$.

\paragraph{Forward-backward splitting method.} 
We compute the iterates
\[
\begin{alignedat}{1}
\bw_{k} &= \bv_{k} + \beta_{k}(\bv_{k} - \bv_{k-1}),\\
\bv_{k+1} &= \argmin_{\normone{\bv} \leqslant \lambda} \left\{\bv - \left(\bw_{k} - \tau\matr{B}^{*}/(m + me^{-\matr{B}\bw_{k}})\right)\right\}, \\
t_{k+1} &= \frac{1 + \sqrt{(1 + 4t_{k}^{2}}}{2} \quad \text{and} \quad \beta_{k+1} = \frac{(t_{k}-1)}{t_{k+1}},\\
\end{alignedat}
\]
where $\tau = 4m/\norm{\matr{B}}_{2,2}^{2}$, $q = \lambda\tau/(1+\lambda\tau)$, $[\bv_{-1}]_{j} = [\bv_{0}]_{j} = 1/d$ for $j \in \{1,\dots,d\}$ and $t_{0} = \beta_{0} = 0$. We evaluate the update $\bv_{k+1}$ using the $\ell_{1}$-ball projection algorithm described in~\citet{condat2016fast}. We compute the time required for convergence in the variable $\bv_{k}$. The iterations were stopped once $\normone{\bv_{k+1} - \bv_{k}} \leqslant 10^{-4}\normone{\bv_{k+1}}$.

\subsubsection{Numerical results}
Table~\ref{tab:numres1} shows the time results for the forward-backward splitting, linear PDHG and nonlinear PDHG methods. For the linear and nonlinear PDHG methods, we also show the time results for convergence with the regular and ergodic sequences as described before. We observe that the nonlinear PDHG method is considerably faster than both the forward-backward splitting and linear PDHG methods; the nonlinear PDHG method achieves a speedup of about 4 to 6.

\begin{table}[ht]
    \centering
    \begin{tabular}{|c|c|c|c|c|c|c|c|}
    \hline
          \multicolumn{1}{|c|}{ } & \multicolumn{7}{c|}{Number of features $n$}\\\cline{2-8}
          
          \multicolumn{1}{|c|}{} & 10 000 & 25 000 & 50 000 & 75 000 & 100 000 & 125 000 & 150 000\\\hline
          
          \multicolumn{1}{|c|}{Optimization methods} & \multicolumn{7}{c|}{Timings (s)}\\\hline
          
          Forward-backward splitting & 40.55 & 114.82 & 269.25 & 437.52 & 725.81 & 839.24 & 1281.77\\
          Linear PDHG (Regular) & 40.56 & 111.60 & 254.41 & 408.79 & 670.57 & 739.37 & 1122.36\\
          Linear PDHG (Ergodic) & 46.96 & 126.51 & 284.25 & 447.06 & 717.33 & 810.46 & 1180.35\\
          Nonlinear PDHG (Regular) & 9.72 & 26.23 & 58.60 & 87.67 & 112.95 & 177.50 & 203.10\\
          Nonlinear PDHG (Ergodic) & 13.52 & 32.47 & 64.71 & 93.97 & 125.65 & 190.25 & 193.36\\
         
         \hline
    \end{tabular}
    \caption{Time results (in seconds) for solving the $\ell_{1}$-restricted logistic regression problem~\eqref{eq:l1-logreg} with the forward-backward and linear PDHG methods and time results for solving the equivalent problem~\eqref{eq:l1-logreg_primal} with the nonlinear PDHG method.}
    \label{tab:numres1}
\end{table}

\subsection{Entropy regularized zero-sum matrix games}
\subsubsection{Data generation and optimization methods}
Following the methodology described in~\cite[Section 2.3]{cen2021fast}, we generate each entry of the payoff matrix $\matr{A}$ from the uniform distribution on $[-1,1]$ and we set $\lambda = 0.1$. Here, we set $m = n$, with $n = 10,000$, $15,000$, $20,000$, $25,000$, $30,000$, $35,000$ and $40,000$.

We perform simulations using the accelerated nonlinear PDHG method~\eqref{eq:alg-matrix-games}, the accelerated linear PDHG method, and the Predictive Update (PU) and Optimistic Multiplicative Weights Update (OMWU) methods from~\citet{cen2021fast}. The initial values, parameters and numerical criteria for convergence of each method are described below.

\paragraph{Accelerated nonlinear PDHG method~\eqref{eq:alg-matrix-games}.}
We generate the entries of the initial vectors $\by^{*}_{0}$ and $[\bx_{-1}]_{j} = [\bx_{0}]_{j}$ for $j \in \{1,\dots,n\}$ uniformly at random in $(0,1/m)$ and $(0,1/n)$, respectively, and normalized their entries so that $\sum_{i=1}^{m} [\by^{*}_{0}]_{i} = 1$ and $\sum_{j=1}^{n} [\bx_{0}]_{j} = 1$. For the parameters, we set
\[
\theta = 1 - \frac{\lambda^2}{2\norm{\matr{A}}_{1,\infty}^2}\left(\sqrt{1 + \frac{4\norm{\matr{A}}_{1,\infty}^2}{\lambda^2}} - 1\right) \quad \text{and} \quad \tau = \sigma = \frac{1-\theta}{\lambda\theta}.
\]
We compute the time required for convergence in the dual variable $\by_{k}^{*}$ and also the time required for convergence in the average dual variable $\bY^{*}$ as defined in Prop.~\ref{prop:acc_sq_algII_var}. The iterations were stopped once $\normtwo{\by^{*}_{k+1} - \by^{*}_{k}} \leqslant 10^{-4}\normtwo{\by^{*}_{k+1}}$ and $\normtwo{\bY^{*}_{K+1} - \bY^{*}_{K}} \leqslant 10^{-4}\normtwo{\bY^{*}_{K+1}}$.

\paragraph{Accelerated linear PDHG method.} We compute the iterates
\[
\begin{alignedat}{1}
\by^{*}_{k+1} &= \argmax_{\by^{*} \in \Delta_{m}}\left\{-\lambda \mathcal{H}_{m}(\by^{*}) + \left\langle \by^{*},\matr{A}(\bx_{k} + \theta(\bx_{k}-\bx_{k-1})) \right\rangle - \frac{1}{2\sigma}\normsq{\by^{*}-\by^{*}_{k}}\right\}, \\
\bx_{k+1} &= \argmin_{\bx \in \Delta_{n}}\left\{\lambda\mathcal{H}_{n}(\bx) + \left\langle \by_{k+1}^{*}, \matr{A}\bx \right\rangle + \frac{1}{2\tau}\normsq{\bx-\bx_{k}}\right\}.
\end{alignedat}
\]
To compute these iterates, we use Moreau's identity~\cite{moreau1965proximite} to express them as follows:
\[
\begin{alignedat}{1}
\bv_{k} &= \by_{k}^{*} + \sigma\matr{A}(\bx_{k} + \theta[\bx_{k}-\bx_{k-1}]),\\
\by_{k+1}^{*} &= \bv_{k} - \argmin_{\bz \in \Rm}\left\{\frac{1}{2}\normsq{\bz-\bv_{k}} + \lambda\sigma \log\left(\sum_{i=1}^{m} e^{[\bz]_{i}/\lambda\sigma}\right)\right\}, \\
\bw_{k} &= \bx_{k} - \tau\matr{A}^{*}\by_{k+1}^{*}, \\
\bx_{k+1} &= \bw_{k} - \argmin_{\bz \in \Rn}\left\{\frac{1}{2}\normsq{\bz - \bw_{k}} + \lambda\tau\log\left(\sum_{i=1}^{m} e^{[\bz]_{i}/\lambda\tau}\right) \right\}.
\end{alignedat}
\]
We use the forward-backward splitting method~\cite[Algorithm 5]{chambolle2016introduction} to compute the second and fourth line. Here we use the same initial values as for the accelerated nonlinear PDHG method~\eqref{eq:alg-matrix-games}, and for the parameters we set
\[
\theta = 1 - \frac{\lambda^2}{2\norm{\matr{A}}_{2,2}^2}\left(\sqrt{1 + \frac{4\norm{\matr{A}}_{2,2}^2}{\lambda^2}} - 1\right) \quad \text{and} \quad \tau = \sigma = \frac{1-\theta}{\lambda\theta}.
\]
We compute the time required for convergence in the dual variable $\by_{k}^{*}$ and also the time required for convergence in the average dual variable $\bY^{*}$ as defined in Prop.~\ref{prop:acc_sq_algII_var}. The iterations were stopped once $\normtwo{\by^{*}_{k+1} - \by^{*}_{k}} \leqslant 10^{-4}\normtwo{\by^{*}_{k+1}}$ and $\normtwo{\bY^{*}_{K+1} - \bY^{*}_{K}} \leqslant 10^{-4}\normtwo{\bY^{*}_{K+1}}$.

\paragraph{Predictive Update and Optimistic Multiplicative Weights Update methods.}
For the PU and OMWU, we use Algorithms 1 and 2 as described in~\cite{cen2021fast} with the learning rates
\[
\eta_{\text{PU}} = \frac{1}{2 + \norm{\matr{A}}_{1,\infty}} \quad \text{and} \quad \eta_{\text{OMWU}} = \min{\left\{\frac{1}{2 + 2\norm{\matr{A}}_{1,\infty}},\frac{1}{4\norm{\matr{A}}_{1,\infty}}\right\}}.
\]

\subsubsection*{Numerical results}
Table~\ref{tab:numres2} shows the time results for the PU, OMWU, and linear and nonlinear PDHG methods. For the linear and nonlinear PDHG methods, we also show the time results for convergence with the regular and ergodic sequences as described before. We observe that the nonlinear PDHG method is considerable faster than both the linear PDHG method and the state-of-the-art methods PU and OMWU for solving the entropy regularized zero-sum matrix game~\eqref{eq:matrix-games_saddle}; the nonlinear PDHG method achieves a speedup of 5 to 11 compared to linear PDHG method and a speedup of 3 to 5 compared to the state-of-the-art methods PU and OMWU.

\begin{table}[ht]
    \centering
    \begin{tabular}{|c|c|c|c|c|c|c|c|}
    \hline
          \multicolumn{1}{|c|}{ } & \multicolumn{7}{c|}{Numbers $m = n$}\\\cline{2-8}
          
          \multicolumn{1}{|c|}{} & 10 000 & 15 000 & 20 000 & 25 000 & 30 000 & 35 000 & 40 000\\\hline
          
          \multicolumn{1}{|c|}{Optimization methods} & \multicolumn{7}{c|}{Timings (s)}\\\hline
          
          PU & 34.16 & 55.01 & 88.48 & 137.14 & 197.66 & 289.49 & 353.61\\
          OMWU & 44.89 & 81.67 & 142.53 & 221.23 & 318.65 & 493.73 & 568.94\\
          Linear PDHG (Regular) & 42.47 & 100.65 & 218.26 & 366.52 & 601.11 & 938.18 & 1094.25\\
          Linear PDHG (Ergodic) & 44.43 & 104.61 & 225.52 & 379.71 & 608.78 & 949.36 & 1114.24\\
          Nonlinear PDHG (Regular) & 8.56 & 15.88 & 24.68 & 38.40 & 55.13 & 82.52 & 103.50\\
          Nonlinear PDHG (Ergodic) & 12.02 & 19.05 & 31.45 & 48.70 & 70.27 & 105.26 & 129.28\\
         
         \hline
    \end{tabular}
    \caption{Time results (in seconds) for solving the entropy regularized zero-sum matrix game~\eqref{eq:matrix-games_saddle} with the PU, OMWU, and linear and nonlinear PDHG methods.}
    \label{tab:numres2}
\end{table}
\section{Discussion} \label{sec:discussion}
We have introduced new accelerated nonlinear primal-dual hybrid gradient (PDHG) optimization methods to solve efficiently large-scale convex optimization problems with saddle-point structure. We proved rigorous convergence results, including results for strongly convex or smooth problems posed on infinite-dimensional reflexive Banach spaces. The new accelerated nonlinear PDHG methods are particularly useful to solve problems involving a logistic regression model or problems defined on the unit simplex or both. Indeed, for these problems, one may choose to use a Bregman divergence defined in terms of the average negative sum of binary entropy terms or the relative entropy to arrive at a straightforward and efficient optimization method. To illustrate this, we presented practical implementations of accelerated nonlinear PDHG methods for $\ell_{1}$-constrained logistic regression and zero-sum matrix games with entropy regularization. Numerical experiments showed that the nonlinear PDHG methods are considerably faster than competing methods.

The new nonlinear PDHG methods are advantageous because they can achieve an optimal convergence rate with stepsize parameters that are simple and efficient to compute. They can be typically computed on the order of $O(mn)$ operations where $m$ and $n$ denote the dimensions to the dual and primal problems at hand. In contrast, most first-order optimization methods, including the linear PDHG method, require on the order of $O(\min{(m^2n,mn^2)})$ operations to compute all the parameters required to achieve an optimal convergence rate. This gain in efficiency can be considerable: in our numerical experiments for $\ell_{1}$-constrained logistic regression and zero-sum matrix games with entropy regularization we were able to get a speedup of 5 to 10 compared to other competing optimization methods.

We expect the accelerated nonlinear PDHG methods described in this work to provide efficient methods for solving large-scale supervised machine learning. In particular, these applications to strongly convex and smooth problems defined on the unit simplex, such as $\nu$-support vector machines with squared loss, maximum entropy estimation problems, and boosting and structured prediction problems in machine learning, will be pursued in future work. It would be interesting to extend the accelerated nonlinear PDHG methods described here to the stochastic case for problems that are separable in the dual variable, and to the non-convex case to deal with large-scale non-convex problems, such as those arising in deep learning. These extensions will be pursued in future work as well.

\section*{Acknowledgement}
Gabriel P. Langlois would like to thank Tingwei Meng for useful discussions and for spotting typos in an earlier version of this manuscript.

\appendix
\section*{Appendices}

\section{Definitions and facts from convex and functional analysis}\label{app:def}
%TODO: Add appendix title?

This appendix lists some basic definitions and facts from convex and functional analysis that are used in this work. It is not meant to be exhaustive, and we refer the reader to~\cite{brezis2010functional,ekeland1999convex,folland2013real,hiriart2013convexI,hiriart2013convexII,rockafellar1970convex} for comprehensive references.

In all definitions and facts below, the spaces $\xcal$ and $\ycal$ denote two real reflexive Banach spaces endowed with norms $\normx{\cdot}$ and $\normy{\cdot}$. The interior of a non-empty subset $C$ of $\xcal$ or $\ycal$ is denoted by $\interior{~C}$. The set of proper, convex and lower semicontinuous functions defined on $\xcal$ and $\ycal$ are denoted by $\gmxcal$ and $\gmycal$. The dual spaces of all continuous linear functionals defined on $\xcal$ and $\ycal$ are denoted by $\xcal^*$ and $\ycal^*$. For a linear functional $\bx^* \in \xcal^*$ and an element $\bx \in \xcal$, the bilinear form $\left\langle \bx^*,\bx \right\rangle$ gives the value of $\bx^*$ at $\bx$. Likewise, for a linear functional $\by^* \in \ycal^*$ and an element $\by \in \ycal$, the bilinear form $\left\langle \by^*,\by \right\rangle$ gives the value of $\by^*$ at $\by$. The norms associated to $\xcal^*$ and $\ycal^*$ are defined as
\[
\normxs{\bx^*} = \sup_{\normx{\bx} = 1} \left\langle \bx^*,\bx \right\rangle \quad \mathrm{and} \quad \normys{\by^*} = \sup_{\normy{\by} = 1} \left\langle \by^*,\by \right\rangle.
\]
Let $\matr{A}\colon \xcal \to \ycal$ denote a bounded linear operator. Its corresponding adjoint operator $\matr{A}^*\colon \ycal^* \to \xcal^*$ is defined so as to satisfy
\begin{equation*}
    \left\langle \matr{A}^*\by^*,\bx \right\rangle = \left\langle \by^*,\matr{A}\bx \right\rangle
\end{equation*}
for every $\bx \in \xcal$ and $\bys \in \ycal^*$. The operator norm associated to $\matr{A}$ is defined as
\begin{equation*}
    \normop{\matr{A}} = \sup_{\normx{\bx} = 1} \normy{\matr{A}\bx} = \normop{\matr{A}^*} = \sup_{\normys{\by^*} = 1}\normxs{\matr{A}^*\by^*}.
\end{equation*}
These definitions imply the Cauchy--Schwartz inequality
\[
\left|\left\langle \by^*,\matr{A}\bx \right\rangle\right| \leqslant \normop{\matr{A}}\normx{\bx}\normys{\by^*}.
\]

\subsection*{Definitions}
\begin{defn}[Convex sets] \label{def:convex_sets}
A subset $C\subset\xcal$ is convex if for every pair $(\bx,\bx') \in C \times C$ and every scalar $\lambda \in (0,1)$, the point $\lambda\bx+(1-\lambda)\bx'$ is contained in $C$.
\end{defn}
\begin{defn}[Proper functions] \label{def:prop_f}
A function $f$ defined on $\xcal$ is proper if its domain 
\[
\dom f = \left\{ \bx \in \xcal : f(\bx) < +\infty\right\}
\]
is non-empty and $f(\bx) > -\infty$ for every $\bx \in \dom f$.
\end{defn}
\begin{defn}[Lower semicontinuous functions] \label{def:lsc_f}
A proper function $f\colon\xcal\to\R\cup\{+\infty\}$ is lower semicontinuous at a point $\bx\in\xcal$ if for every sequence $\left\{ \bx_k\right\} _{k=1}^{+\infty}$ in $\xcal$ that converges to $\bx$, 
\[
\liminf_{k\to+\infty}f(\bx_k)\geqslant f(\bx).
\] 
We say that $f$ is lower semicontinuous if it is lower semicontinuous at every $\bx \in \dom f$.
\end{defn}
\begin{defn}[Convex functions] \label{def:convex} 
A proper function $f\colon\xcal\to\R\cup\{+\infty\}$ is convex if its domain $\dom f$ is convex and if for every pair $(\bx,\bx') \in \dom f \times \dom f$ and every scalar $\lambda\in[0,1]$,
\[
f(\lambda\bx+(1-\lambda)\bx')\leqslant\lambda f(\bx)+(1-\lambda)f(\bx').
\]
It is \textit{strictly convex} if the inequality above is strict whenever $\bx\neq\bx'$ and $\lambda \in (0,1)$, and it is \textit{$m$-strongly convex} (with $m>0$) if for every pair $(\bx,\bx') \in \dom f \times \dom f$ and every scalar $\lambda\in[0,1]$. 
\[
f(\lambda\bx+(1-\lambda)\bx')\leqslant\lambda f(\bx)+(1-\lambda)f(\bx')-\frac{m}{2}\lambda(1-\lambda)\normx{\bx-\bx'}^2.
\]
\end{defn}
\begin{defn}[Coercive functions]\label{def:coercive}
A proper function $f\colon\xcal\to\R\cup\{+\infty\}$ is coercive if for every sequence $\{\bx_k\}_{k=1}^{+\infty}$ in $\xcal$ such that $\lim_{k \to +\infty} \normx{\bx_k} = +\infty$,
\[
\lim_{k \to +\infty} f(\bx_k) = +\infty.
\]
A proper function $f\colon\xcal\to\R\cup\{+\infty\}$ is supercoercive if for every sequence $\{\bx_k\}_{k=1}^{+\infty}$ in $\xcal$ such that $\lim_{k \to +\infty} \normx{\bx_k} = +\infty$,
\[
\lim_{k \to +\infty} \frac{f(\bx_k)}{\normx{\bx_k}} = +\infty.
\]
\end{defn}
\begin{defn}[Weak convergence]\label{def:weak_conv}
A sequence $\{\bx_k\}_{k=1}^{+\infty}$ of points in $\xcal$ converges weakly to $\bx \in \xcal$ if for every linear functional $\bx^* \in \gmxcal$,
\[
\lim_{k \to +\infty} \left\langle \bx^*,\bx_k \right\rangle = \left\langle \bx^*,\bx \right\rangle.
\]
\end{defn}
\begin{defn}[Differentiability] \label{def:differentiability}
A proper function $f\colon \xcal \to \R\cup\{+\infty\}$ with $\interior{(\dom f)} \neq \varnothing$ is differentiable at a point $\bx \in \interior{(\dom f)}$ if there exists a linear functional $\bx^* \in \xcal^*$ such that for every $\bx' \in \xcal$,
\[
\lim_{\substack{\lambda\to0\\ \lambda>0}} \frac{f(\bx + \lambda\bx') - f(\bx)}{\lambda} = \left\langle \bxs,\bx'\right\rangle.
\]
This linear functional, when it exists, is unique. It is called the gradient of $f$ at $\bx$ and is denoted by $\nabla f(\bx)$.
\end{defn}
\begin{defn}[Subdifferentiability and subgradients]\label{def:subgrad}
A function $f \in \gmxcal$ is subdifferentiable at a point $\bx \in \xcal$ if there exists a linear functional $\bx^* \in \xcal^*$ such that for every $\bx' \in \dom f$,
\begin{equation}\label{eq:conv_subdiff_char}
f(\bx') - f(\bx) - \left\langle\bx^*,\bx'-\bx \right\rangle \geqslant 0.
\end{equation}
In this case, $\bx^*$ is called a subgradient of the function $f$ at $\bx$. The set of subgradients at $\bx \in \xcal$ is called the subdifferential of $f$ at $\bx$, and it is denoted by $\partial f(\bx)$. The set of points $\bx \in \dom f$ at which the subdifferential $\partial f(\bx)$ is non-empty is denoted by $\dom \partial f$.

If $f$ is strictly convex, then for $\bx \neq \bx'$ the inequality in~\eqref{eq:conv_subdiff_char} is strict. If $f$ is $m$-strongly convex and $\bx \in \dom \partial f$, then for every $\bx' \in \dom f$ the subgradients $\bx^* \in \partial f(\bx)$ satisfy the inequality
\begin{equation}\label{eq:sc_subdiff_char}
f(\bx') - f(\bx) - \left\langle \bx^*,\bx'-\bx\right\rangle \geqslant \frac{m}{2}\normx{\bx-\bx'}^2.
\end{equation}
\end{defn}
\begin{defn}[Convex conjugates] \label{def:conv_conj}
Let $f \in \gmxcal$. The convex conjugate $f^{*}\colon\xcal^*\to\mathbb{R}\cup\{+\infty\}$ of $f$ is defined by
\begin{equation*}
f^{*}(\bx^*)=\sup_{\bx \in \dom f}\left\{\left\langle \bx^*,\bx\right\rangle -f(\bx)\right\}. 
\end{equation*}
By definition, the function $f^*$ is in $\gmxcals$~\cite[Definition 4.1]{ekeland1999convex}.
\end{defn}
\begin{defn}[Saddle points]\label{def:saddle}
Let $\mathcal{L}\colon \xcal \times \ycal^* \to \R\cup\{+\infty\}$ be a proper function. A pair of points $(\bx_{s},\by_{s}^{*}) \in \xcal \times \ycal^*$ is a saddle point of $\mathcal{L}$ if for every $\bx \in \xcal$ and $\bys \in \ycal^*$,
\[
\mathcal{L}(\bx_{s},\bys) \leqslant \mathcal{L}(\bx_{s},\by_{s}^{*}) \leqslant \mathcal{L}(\bx,\by_{s}^{*}). % See definition in Ekeland, part 2, chapter 3, def 3.2 on page57
\]
\end{defn}
\begin{defn}[Essential smoothness]\label{def:ess_smooth}
A function $f \in \gmxcal$ is essentially smooth if $\dom \partial f  \neq \varnothing$, $\dom \partial f  = \interior{(\dom f)}$, $f$ is differentiable on $\interior{(\dom f)}$, and $\normx{\nabla f(\bx_k)} \to +\infty$ for every sequence $\{\bx_k\}_{k=1}^{+\infty}$ in $\interior{(\dom f)}$ converging to some boundary point of $\dom f$.
\end{defn}
\begin{defn}[Essential strict convexity]\label{def:ess_conv}
A function $f \in \gmxcal$ is essentially strictly convex if $f$ is strictly convex on every convex subset of $\dom \partial f$ and the subdifferential mapping $\partial f^{*}$ is locally bounded on its domain.
\end{defn}
\begin{defn}[Bregman divergences]\label{def:bregman}
Let $\phi \in \gmxcal$ with $\interior{(\dom \phi)} \neq \varnothing$. The Bregman divergence of the function $\phi$ is the function $D_{\phi}\colon \xcal\times \interior{(\dom \phi)} \to [0,+\infty]$ defined as
\[
D_{\phi}(\bx,\bx') = \phi(\bx) - \phi(\bx') - \max_{\bx^* \in \partial \phi(\bx')}\left\{\left\langle \bx^*,\bx-\bx'\right\rangle\right\}.
\]
Note that Bregman divergences are sometimes defined differently in the convex analysis literature. Here, we use the definition of~\citet[Definition 7.1 and Lemma 7.3(i)]{bauschke2001essential}.
\end{defn}
\begin{defn}[Bregman proximity operators]\label{def:d_prox_operator}
Let $f,\phi \in \gmxcal$ with $\interior{(\dom \phi)} \neq \varnothing$ and let $t > 0$. The Bregman $D_{\phi}$-proximal operator $\mathrm{prox}_{\left(tf,D_{\phi}\right)}(\cdot)$ is a set-valued mapping defined for every $\bx' \in \interior{(\dom \phi)}$ as
\begin{equation}\label{eq:d_prox_operator}
    \mathrm{prox}_{\left(tf,D_{\phi}\right)}(\bx') = \left\{\bxh \in \dom f \cap \dom \phi : tf(\bxh) + D_{\phi}(\bxh,\bx') = \inf_{\bx \in \xcal} \left\{tf(\bx) + D_{\phi}(\bx,\bx')\right\} < +\infty  \right\}. 
\end{equation}
\end{defn}

\subsection*{Facts}
\begin{fact}\label{fact:cauchy_ineq_cont}
Let $\alpha > 0$ and let $\matr{A}\colon \xcal \to \ycal$ be a bounded linear operator. For every $(\bx,\bys), (\bx',\bys') \in \xcal \times \ycal^{*}$, the following auxiliary inequality holds:
\begin{equation}\label{eq:cauchy_ineq_cont}
    \left|\left\langle \bys-\bys', \matr{A}(\bx-\bx') \right\rangle\right| \leqslant \normop{\matr{A}}\left(\frac{\alpha}{2}\normx{\bx-\bx'}^2 + \frac{1}{2\alpha}\normys{\bys - \bys'}^2\right).
\end{equation}
\begin{proof}
From the Cauchy--Schwartz inequality,
\begin{equation*}
\begin{alignedat}{1}
\left |\left\langle \bys-\bys' ,\matr{A}(\bx-\bx'\right\rangle \right | &\leqslant \normop{\matr{A}}\normx{\bx-\bx'}\normys{\bys - \bys'} \\
&= \normop{\matr{A}}\left(\frac{\alpha}{2}\normx{\bx-\bx'}^2 + \frac{1}{2\alpha}\normys{\bys - \bys'}^2\right) \\
&\qquad - \normop{\matr{A}}\left(\sqrt{\frac{\alpha}{2}}\normx{\bx-\bx'} - \sqrt{\frac{1}{2\alpha}}\normys{\bys - \bys'}\right)^2 \\
&\leqslant \normop{\matr{A}}\left(\frac{\alpha}{2}\normx{\bx-\bx'}^2 + \frac{1}{2\alpha}\normys{\bys - \bys'}^2\right).
\end{alignedat}
\end{equation*}
\end{proof}
\end{fact}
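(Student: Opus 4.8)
The plan is to reduce the claim to two elementary ingredients: the generalized Cauchy--Schwartz inequality for the bilinear pairing induced by $\matr{A}$, which is recorded just above in this appendix, and the weighted arithmetic--geometric mean (Young) inequality that converts a product of norms into a weighted sum of their squares.

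First I would apply the Cauchy--Schwartz inequality $\left|\left\langle \by^*, \matr{A}\bx\right\rangle\right| \leqslant \normop{\matr{A}}\normx{\bx}\normys{\by^*}$ with the substitutions $\bx \mapsto \bx - \bx'$ and $\by^* \mapsto \bys - \bys'$. This immediately gives
\[
\left|\left\langle \bys - \bys', \matr{A}(\bx - \bx')\right\rangle\right| \leqslant \normop{\matr{A}}\normx{\bx-\bx'}\normys{\bys-\bys'},
\]
so the problem is reduced to bounding the scalar product $\normx{\bx-\bx'}\normys{\bys-\bys'}$ from above by the weighted sum of squares appearing on the right-hand side of the claim.

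Second, I would invoke Young's inequality in its weighted form: for any nonnegative reals $a,b$ and any $\alpha > 0$, one has $ab \leqslant \frac{\alpha}{2}a^2 + \frac{1}{2\alpha}b^2$. This is nothing more than the expansion of the nonnegative perfect square $\left(\sqrt{\alpha/2}\,a - \sqrt{1/(2\alpha)}\,b\right)^2 \geqslant 0$, whose cross term has coefficient exactly $2\sqrt{(\alpha/2)(1/(2\alpha))} = 1$. Taking $a = \normx{\bx-\bx'}$ and $b = \normys{\bys-\bys'}$, and then multiplying through by the nonnegative factor $\normop{\matr{A}}$, yields precisely the stated bound once combined with the previous display.

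I do not anticipate a genuine obstacle here; the result is a routine consequence of Cauchy--Schwartz followed by Young. The only point requiring a little care is tracking the weights so that the coefficients $\frac{\alpha}{2}$ and $\frac{1}{2\alpha}$ emerge correctly, which is handled cleanly by completing the square as above. Keeping the perfect-square remainder explicit in the chain of inequalities also makes the passage from the product of norms to the weighted sum of squares fully transparent.
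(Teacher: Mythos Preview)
Your proposal is correct and follows essentially the same route as the paper: apply the Cauchy--Schwartz bound $\left|\left\langle \bys-\bys', \matr{A}(\bx-\bx')\right\rangle\right| \leqslant \normop{\matr{A}}\normx{\bx-\bx'}\normys{\bys-\bys'}$, then use Young's inequality (equivalently, complete the square) to pass from the product of norms to the weighted sum of squares. The paper writes the product explicitly as the weighted sum minus the perfect square $\normop{\matr{A}}\left(\sqrt{\alpha/2}\,\normx{\bx-\bx'} - \sqrt{1/(2\alpha)}\,\normys{\bys-\bys'}\right)^2$ and then drops it, which is exactly the mechanism you describe.
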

\begin{fact}[Weighted averages of a convergent sequence]\label{fact:weighted_averages}
Let $\{\bx_{k}\}_{k=1}^{+\infty} \subset \xcal$ be a sequence converging strongly to some $\bx \in \xcal$, let $\{\lambda_{k}\}_{k=1}^{+\infty} \subset (0,+\infty)$ be a divergent sequence, i.e., $\sum_{k=1}^{+\infty} \lambda_{k} = +\infty$, and set $T_k = \sum_{j=1}^{k} \lambda_{j}$. Then
\[
\lim_{k \to +\infty} \normx{\frac{1}{T_k}\left(\sum_{j=1}^{k} \lambda_{j}\bx_{j}\right) - \bx} = 0.
\]
\end{fact}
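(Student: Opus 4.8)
The plan is to adapt the classical Ces\`aro (Silverman--Toeplitz) averaging argument to the Banach-space setting. The key observation is that, since $T_k = \sum_{j=1}^{k}\lambda_j$, the normalized weights $\lambda_j/T_k$ sum to one, so that $\frac{1}{T_k}\sum_{j=1}^{k}\lambda_j\bx = \bx$. Subtracting this identity lets me rewrite the quantity of interest as a weighted average of the error vectors,
\[
\frac{1}{T_k}\left(\sum_{j=1}^{k}\lambda_j\bx_j\right) - \bx = \frac{1}{T_k}\sum_{j=1}^{k}\lambda_j(\bx_j - \bx),
\]
and the whole claim reduces to showing that this weighted average of $\bx_j - \bx$ tends to zero in the norm $\normx{\cdot}$.

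First I would fix $\epsilon > 0$ and, invoking the strong convergence $\bx_j \to \bx$, choose an index $N$ such that $\normx{\bx_j - \bx} < \epsilon/2$ for all $j > N$. I then split the sum at $N$,
\[
\frac{1}{T_k}\sum_{j=1}^{k}\lambda_j(\bx_j - \bx) = \frac{1}{T_k}\sum_{j=1}^{N}\lambda_j(\bx_j - \bx) + \frac{1}{T_k}\sum_{j=N+1}^{k}\lambda_j(\bx_j - \bx),
\]
and bound each piece separately using the triangle inequality together with the fact that all the weights $\lambda_j$ are strictly positive.

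For the tail sum, since $\lambda_j > 0$ and $\sum_{j=N+1}^{k}\lambda_j \leqslant T_k$, I obtain
\[
\normx{\frac{1}{T_k}\sum_{j=N+1}^{k}\lambda_j(\bx_j - \bx)} \leqslant \frac{1}{T_k}\sum_{j=N+1}^{k}\lambda_j\normx{\bx_j - \bx} < \frac{\epsilon}{2}.
\]
For the head sum, the numerator $C_N := \normx{\sum_{j=1}^{N}\lambda_j(\bx_j - \bx)}$ is a fixed finite constant independent of $k$, while the hypothesis $\sum_{k=1}^{+\infty}\lambda_k = +\infty$ forces $T_k \to +\infty$. Hence there exists $K$ with $C_N/T_k < \epsilon/2$ for all $k \geqslant K$, and combining the two bounds gives a total norm strictly less than $\epsilon$ for every $k \geqslant \max(N,K)$. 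Since $\epsilon$ was arbitrary, the limit is zero, which is the assertion.

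I do not expect any genuine obstacle here. The only point requiring care is the standard two-sided $\epsilon$-split: the index $N$ must be frozen first so that the head sum becomes a fixed vector, after which letting $T_k \to +\infty$ kills that term; meanwhile the nonnegativity of the weights is exactly what allows the tail to be controlled by the full normalizing constant $T_k$.
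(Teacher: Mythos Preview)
Your proposal is correct and follows essentially the same argument as the paper: fix $\epsilon$, split the weighted average of the errors $\bx_j-\bx$ into a head sum and a tail sum, bound the tail by $\epsilon/2$ using the convergence of $\bx_j$, and kill the head by letting $T_k\to+\infty$. The only cosmetic difference is that the paper applies the triangle inequality before splitting (so the head is a sum of norms rather than the norm of a sum), but the logic is identical.
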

\begin{proof}
Fix $\epsilon > 0$. Then there exists some $K_1 \in \mathbb{N}$ such that for every $k \geqslant K_1$, we have $\normx{\bx_{k} - \bx} < \epsilon/2$. Now, let $k \geqslant K_1$, take the difference between the weighted average $\frac{1}{T_k}\sum_{j=1}^{k} \lambda_{j}\bx_{j}$ and $\bx$, take the norm, use the triangle inequality and rearrange to get
\[
\begin{alignedat}{1}
\normx{\frac{1}{T_k}\sum_{j=1}^{k} \lambda_{j}\bx_{j} - \bx} &= \normx{\frac{1}{T_k}\sum_{j=1}^{k} \lambda_{j}\left(\bx_j - \bx\right)} \\
&\leqslant \frac{1}{T_k}\sum_{j=1}^{k} \lambda_{j}\normx{\bx_j - \bx} \\
&= \frac{1}{T_k}\sum_{j=1}^{K_1-1} \lambda_{j}\normx{\bx_j - \bx} + \frac{1}{T_k}\sum_{j=K_1}^{k} \lambda_{j}\normx{\bx_j - \bx} \\
&\leqslant \frac{1}{T_k}\sum_{j=1}^{K_1-1} \lambda_{j}\normx{\bx_j - \bx} + \frac{1}{T_k}\sum_{j=K_1}^{k} \lambda_{j} \frac{\epsilon}{2} \\
&\leqslant \frac{1}{T_k}\sum_{j=1}^{K_1-1} \lambda_{j}\normx{\bx_j - \bx} + \frac{\epsilon}{2}.
\end{alignedat}
\]
The first term on the right hand side of the last line depends on $k$ only through the term $T_k$. By assumption, $T_k \to +\infty$ as $k \to +\infty$, and therefore there exists some $K_2 \in \mathbb{N}$ such that for $k \geqslant K_{2}$,
\[
\frac{1}{T_k}\sum_{j=1}^{K_1-1} \lambda_{j}\normx{\bx_j - \bx} < \frac{\epsilon}{2}.
\] 
Taking $k \geqslant \max{(K_1,K_2)}$, we find
\[
\normx{\frac{1}{T_k}\sum_{j=1}^{k} \lambda_{j}\bx_{j} - \bx} < \epsilon.
\]
As $\epsilon$ was arbitrary positive number, we can take $\epsilon \to 0$ and obtain the desired result.
\end{proof}
%
%\begin{fact}[Continuity of convex functions]\label{fact:%cts_conv}
%Every function $f \in \gmxcal$ is continuous over the interior of its domain.
%\begin{proof}
%See~\cite[Corollary 2.5]{ekeland1999convex} for a proof.
%\end{proof}
%\end{fact}
%
\begin{fact}[Supercoercivity]\label{fact:supercoercivity}
Let $f \in \gmxcal$ and suppose that $f$ is supercoercive. Then for every $\alpha >0$, there exists $\beta \in \R$ such that $f(\bx) \geqslant \alpha\normx{\bx} + \beta$ for every $\bx \in \xcal$. In particular, a supercoercive function is always bounded from below.
\begin{proof}
See~\cite[Lemma 3.2]{bauschke2001essential} for a proof.
\end{proof}
\end{fact}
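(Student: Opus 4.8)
The plan is to fix $\alpha > 0$ and show that the function $\bx \mapsto f(\bx) - \alpha\normx{\bx}$ is bounded below, since this is exactly the assertion $f(\bx) \geqslant \alpha\normx{\bx} + \beta$ with $\beta$ any lower bound. I would split $\xcal$ into the closed ball $\{\bx : \normx{\bx} \leqslant R\}$ and its complement for a suitable radius $R$, obtain a lower bound on $f(\bx) - \alpha\normx{\bx}$ over each piece, and take the smaller of the two constants as $\beta$.

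On the complement, supercoercivity does the work directly. First I would record the elementary observation that the sequential definition of supercoercivity (Definition~\ref{def:coercive}) is equivalent to the statement that for every $M > 0$ there exists $R > 0$ with $f(\bx)/\normx{\bx} \geqslant M$ whenever $\normx{\bx} > R$; the contrapositive extracts a sequence $\{\bx_k\}_{k=1}^{+\infty}$ with $\normx{\bx_k} \to +\infty$ along which $f(\bx_k)/\normx{\bx_k}$ stays bounded, contradicting supercoercivity. Taking $M = \alpha$ yields $R > 0$ such that $f(\bx) \geqslant \alpha\normx{\bx}$ whenever $\normx{\bx} > R$, so on this region $f(\bx) - \alpha\normx{\bx} \geqslant 0$.

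The main obstacle is the bounded region, where supercoercivity gives nothing and, because $\xcal$ is merely a reflexive Banach space, the closed ball is not compact, so lower semicontinuity alone does not bound $f$ from below. Here I would invoke the existence of a continuous affine minorant of $f$. Since $f \in \gmxcal$, its conjugate $f^{*} \in \gmxcals$ is proper (Definition~\ref{def:conv_conj}), so there is some $\bx_0^{*} \in \dom f^{*}$; unwinding the definition of the conjugate gives $f(\bx) \geqslant \left\langle \bx_0^{*}, \bx \right\rangle - f^{*}(\bx_0^{*})$ for every $\bx \in \xcal$. Bounding $\left\langle \bx_0^{*}, \bx\right\rangle \geqslant -\normxs{\bx_0^{*}}\normx{\bx} \geqslant -\normxs{\bx_0^{*}} R$ on the ball then produces a finite constant $c := -\normxs{\bx_0^{*}} R - f^{*}(\bx_0^{*})$ with $f(\bx) \geqslant c$ whenever $\normx{\bx} \leqslant R$, whence $f(\bx) - \alpha\normx{\bx} \geqslant c - \alpha R$ there.

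Combining the two regions, $f(\bx) - \alpha\normx{\bx} \geqslant \beta$ for all $\bx$ with $\beta := \min\{0, c - \alpha R\}$, which is the desired inequality. The final clause is then immediate: since $\alpha > 0$ and $\normx{\bx} \geqslant 0$, we have $f(\bx) \geqslant \alpha\normx{\bx} + \beta \geqslant \beta$, so $f$ is bounded below. The only genuinely nontrivial ingredient is the affine minorant, which is precisely where properness of $f^{*}$ replaces the compactness argument available in finite dimensions.
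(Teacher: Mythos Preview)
Your argument is correct. The paper does not give its own proof here but simply cites \cite[Lemma 3.2]{bauschke2001essential}, so there is nothing to compare at the level of strategy; your write-up is a self-contained version of the standard argument. The two ingredients you use are exactly the right ones: supercoercivity handles the exterior of a large ball, and the existence of a continuous affine minorant (equivalently, properness of $f^{*}$, which the paper records in Definition~\ref{def:conv_conj}) handles the bounded region without any appeal to compactness. Your observation that the closed ball in a general reflexive Banach space is not norm-compact, so lower semicontinuity alone does not suffice, is the key point and is correctly addressed by the affine-minorant step.
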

\begin{fact}[Bounded sequences and weak convergence]\label{fact:weak_conv_bdd_seq}
Let $\{\bx_k\}_{k=1}^{+\infty}$ be a bounded sequence in $\xcal$. Then this sequence has a subsequence $\{\bx_{k_l}\}_{l=1}^{+\infty}$ that converges weakly to some element in $\xcal$.
\end{fact}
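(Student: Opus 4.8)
The plan is to exploit reflexivity of $\xcal$, which is the only structural hypothesis available, and to reduce the problem to a separable subspace where a diagonal extraction argument applies. First I would set $M = \sup_{k} \normx{\bx_k} < +\infty$ and pass to the closed linear span $Y = \overline{\mathrm{span}}\{\bx_k : k \geqslant 1\}$. This subspace is separable, since finite rational combinations of the $\bx_k$ form a countable dense set, and it is itself reflexive because a closed subspace of a reflexive Banach space is reflexive. The key payoff is that a separable reflexive space has a separable dual $Y^*$: reflexivity gives $Y^{**} \cong Y$ separable, and separability of the bidual forces separability of the dual.

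Next I would fix a countable dense subset $\{f_n\}_{n=1}^{+\infty}$ of $Y^*$ and run the standard diagonal argument. Each scalar sequence $\{\langle f_n, \bx_k\rangle\}_k$ is bounded by $\normxs{f_n}\,M$, so one extracts nested subsequences along which $\langle f_1, \cdot\rangle$, then $\langle f_2, \cdot\rangle$, and so on converge; the diagonal subsequence $\{\bx_{k_l}\}_l$ then makes $\langle f_n, \bx_{k_l}\rangle$ convergent for every $n$. A standard $\epsilon/3$ argument using density of $\{f_n\}$ and the uniform bound $M$ upgrades this to convergence of $\langle f, \bx_{k_l}\rangle$ for every $f \in Y^*$, and the resulting limit defines a bounded linear functional $L$ on $Y^*$, i.e.\ an element of $Y^{**}$.

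The crucial step — and where reflexivity does the real work — is identifying this limit functional with an actual point of the space. By reflexivity of $Y$, the element $L \in Y^{**}$ is represented by some $\bx \in Y \subseteq \xcal$, so that $\langle f, \bx_{k_l}\rangle \to \langle f, \bx\rangle$ for all $f \in Y^*$. Finally I would extend to all of $\xcal$: any $\bx^* \in \xcals$ restricts to a functional in $Y^*$, whence $\langle \bx^*, \bx_{k_l}\rangle \to \langle \bx^*, \bx\rangle$, which is exactly weak convergence $\bx_{k_l} \rightharpoonup \bx$ in $\xcal$. The main obstacle is not any single estimate but the organization of these reductions — the separability of $Y^*$ and the representation of $L$ by a genuine element via reflexivity are the two facts one must not skip, since without reflexivity the diagonal limit would live in $Y^{**} \setminus Y$ and need not correspond to any weak limit in $\xcal$. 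Alternatively, one can simply invoke Kakutani's theorem, that the closed unit ball of a reflexive space is weakly compact, together with the Eberlein--\v{S}mulian theorem equating weak compactness with weak sequential compactness, and then apply this to the ball $\{\bx : \normx{\bx} \leqslant M\}$ containing the sequence.
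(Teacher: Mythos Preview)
Your proposal is correct and in fact supplies far more detail than the paper does: the paper's own ``proof'' of this fact is simply a citation to \cite[Theorem 3.18]{brezis2010functional}. The diagonal-extraction argument you outline (pass to the separable reflexive span $Y$, use separability of $Y^*$, extract via Cantor diagonalization, then identify the limit in $Y^{**}$ with a point of $Y$ by reflexivity) is essentially the proof given in Brezis, so there is no genuine methodological difference to report. Your closing remark that one can alternatively invoke Kakutani plus Eberlein--\v{S}mulian is also a standard and valid route.
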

\begin{proof}
See~\cite[Theorem 3.18]{brezis2010functional}.
\end{proof}
\begin{fact}[The primal problem and its dual problem]\label{fact:primal_dual_prob}
Let $g \in \gmxcal$, let $h \in \gmycal$, and let $\matr{A}\colon \xcal \to \ycal$ be a bounded linear operator. Assume the primal (minimization) problem
\begin{equation}\label{eq:fact:primal_prob}
\inf_{\bx \in \xcal} \left\{g(\bx) + h(\matr{A}\bx)\right\}
\end{equation}
has at least one solution and assume there exists $\bx \in \xcal$ such that $h$ is continuous at $\matr{A}\bx$. Then the dual (maximization) problem
\begin{equation}\label{eq:fact:dual_prob}
\sup_{\bys \in \ycal^*} \left\{-g^*(-\matr{A}^*\bys) - h^*(\bys)\right\}
\end{equation}
is finite and has at least one solution. Moreover, if $(\bx_{s},\by_{s}^{*})$ denotes a pair of solutions to the primal and dual problem then $(\bx_{s},\by_{s}^{*})$
satisfies the following optimality conditions
\[
-\matr{A}^*\by_{s}^{*} \in \partial g(\bx_{s}) \quad \mathrm{and} \quad \by_{s}^{*} \in \partial h(\matr{A}\bx_{s}).
\]
\begin{proof}
See~\cite[Theorem 4.1, Theorem 4.2, Equations (4.24)-(4.25)]{ekeland1999convex} for a proof. (Beware, in~\cite{ekeland1999convex} the notation used for the solution $\by_{s}^{*}$ is flipped by a minus sign.)
\end{proof}
\end{fact}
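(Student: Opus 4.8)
The plan is to recognize this as the classical Fenchel--Rockafellar duality theorem and to prove it via the value (perturbation) function, so that strong duality and dual attainment both fall out of a single subdifferentiability argument. First I would record weak duality directly from the Fenchel--Young inequality: for every $\bx \in \xcal$ and $\bys \in \ycals$, the two bounds $g(\bx) + g^*(-\matr{A}^*\bys) \geqslant \langle -\matr{A}^*\bys, \bx\rangle = -\langle \bys, \matr{A}\bx\rangle$ and $h(\matr{A}\bx) + h^*(\bys) \geqslant \langle \bys, \matr{A}\bx\rangle$ add to give $g(\bx) + h(\matr{A}\bx) \geqslant -g^*(-\matr{A}^*\bys) - h^*(\bys)$, where I have used the adjoint identity $\langle \matr{A}^*\bys,\bx\rangle = \langle \bys,\matr{A}\bx\rangle$. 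Hence the primal value in~\eqref{eq:fact:primal_prob} always dominates the dual value in~\eqref{eq:fact:dual_prob}.

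Next I would introduce the value function $p\colon \ycal \to \R \cup \{\pm\infty\}$ defined by $p(\by) = \inf_{\bx \in \xcal}\{g(\bx) + h(\matr{A}\bx + \by)\}$, which is convex (it is the infimal projection of the jointly convex map $(\bx,\by)\mapsto g(\bx)+h(\matr{A}\bx+\by)$) and whose value $p(\boldsymbol{0})$ equals the primal optimum. A direct computation of its conjugate, using the substitution $\bz = \matr{A}\bx + \by$ and splitting the supremum, yields $p^*(\bys) = g^*(-\matr{A}^*\bys) + h^*(\bys)$, so the dual objective is exactly $-p^*(\bys)$ and the dual optimum equals $\sup_{\bys}\{\langle \bys, \boldsymbol{0}\rangle - p^*(\bys)\}$. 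It therefore suffices to produce a single $\bys$ with $-p^*(\bys) = p(\boldsymbol{0})$.

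The crux uses the constraint qualification. Choosing $\bx_{0} \in \dom g$ at which $h$ is continuous at $\matr{A}\bx_{0}$ (as furnished by the hypothesis), I would bound $p(\by) \leqslant g(\bx_{0}) + h(\matr{A}\bx_{0} + \by)$, so $p$ is bounded above on a neighborhood of $\boldsymbol{0}$; since $p$ is convex, boundedness above near a point forces $p$ to be finite and continuous there, whence $\boldsymbol{0} \in \interior(\dom p)$ and $p$ is proper. A convex function continuous at a point is subdifferentiable there, so I may pick $\by_{s}^{*} \in \partial p(\boldsymbol{0})$. The subgradient inequality then gives the Fenchel equality $p(\boldsymbol{0}) + p^*(\by_{s}^{*}) = 0$, that is $-p^*(\by_{s}^{*}) = p(\boldsymbol{0})$; together with weak duality this shows the dual is finite, has no duality gap, and is attained at $\by_{s}^{*}$.

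Finally, for the optimality conditions, suppose $(\bx_{s}, \by_{s}^{*})$ solve the primal and dual problems. A zero duality gap forces the weak-duality chain to collapse to equalities, so both Fenchel--Young inequalities from the first step hold with equality at $(\bx_{s},\by_{s}^{*})$. The equality case of Fenchel--Young is precisely $-\matr{A}^*\by_{s}^{*} \in \partial g(\bx_{s})$ and $\by_{s}^{*} \in \partial h(\matr{A}\bx_{s})$, which are the claimed conditions. The main obstacle is the infinite-dimensional analytic implication ``bounded above near a point $\Rightarrow$ continuous $\Rightarrow$ nonempty subdifferential'': this rests on a Hahn--Banach separation applied to the epigraph of $p$ in $\ycal \times \R$ and must be handled with care because $\ycal$ is a general reflexive Banach space rather than $\R^{n}$. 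Everything else reduces to bookkeeping with conjugates and the adjoint identity.
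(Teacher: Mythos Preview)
Your argument is correct and is exactly the perturbation (value-function) method that the cited reference uses; the paper itself gives no proof beyond pointing to \cite[Theorems~4.1--4.2 and (4.24)--(4.25)]{ekeland1999convex}, so your write-up essentially fills in that citation. One small caution: you tacitly assume the qualification point satisfies $\bx_{0}\in\dom g$ when you bound $p(\by)\leqslant g(\bx_{0})+h(\matr{A}\bx_{0}+\by)$, whereas the Fact as literally stated only says ``there exists $\bx\in\xcal$''; this stronger reading is the intended one (it matches assumption~(A1) and the Ekeland--Temam hypothesis), but you should state it explicitly rather than claim it is ``furnished by the hypothesis.''
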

\begin{fact}[Convex-concave saddle point problems]\label{fact:saddle_prob}
Let $g \in \gmxcal$, let $h \in \gmycal$, let $\matr{A}\colon \xcal \to \ycal$ be a bounded linear operator, define the function $\mathcal{L}\colon \xcal \times \ycal^* \to \R\cup\{+\infty\}$ as
\[
\mathcal{L}(\bx,\bys) = g(\bx) + \left\langle \bys,\matr{A}\bx \right\rangle - h^*(\bys).
\]
Then the pair of points $(\bx_{s},\by_{s}^{*}) \in \xcal \times \ycal^*$ is a saddle point of $\mathcal{L}$ if and only if $\bx_{s}$ is a solution of the primal problem~\eqref{eq:fact:primal_prob} and $\by_{s}^{*}$ is a solution of the dual problem~\eqref{eq:fact:dual_prob}.
\begin{proof}
See~\cite[Proposition 3.1, page 57]{ekeland1999convex}.
\end{proof}
\end{fact}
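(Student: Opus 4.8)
The plan is to prove the two implications of the equivalence separately, exploiting the observation that, by Definition~\ref{def:saddle}, the pair $(\bx_{s},\by_{s}^{*})$ is a saddle point of $\mathcal{L}$ precisely when $\by_{s}^{*}$ maximizes $\bys \mapsto \mathcal{L}(\bx_{s},\bys)$ over $\ycal^{*}$ and $\bx_{s}$ minimizes $\bx \mapsto \mathcal{L}(\bx,\by_{s}^{*})$ over $\xcal$. Both directions hinge on the two partial-optimization identities
\[
\sup_{\bys \in \ycal^{*}} \mathcal{L}(\bx,\bys) = g(\bx) + h^{**}(\matr{A}\bx) = g(\bx) + h(\matr{A}\bx) \quad \text{for every } \bx \in \xcal,
\]
\[
\inf_{\bx \in \xcal} \mathcal{L}(\bx,\bys) = -g^{*}(-\matr{A}^{*}\bys) - h^{*}(\bys) \quad \text{for every } \bys \in \ycal^{*},
\]
where the first uses the Fenchel--Moreau biconjugation theorem $h^{**} = h$ (valid since $h \in \gmycal$ and $\ycal$ is reflexive) and the second uses Definition~\ref{def:conv_conj} together with the adjoint identity $\langle \bys,\matr{A}\bx \rangle = \langle \matr{A}^{*}\bys,\bx \rangle$.

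For the forward implication, suppose $(\bx_{s},\by_{s}^{*})$ is a saddle point. The left saddle inequality states that $\by_{s}^{*}$ attains the first supremum, so $\sup_{\bys}\mathcal{L}(\bx_{s},\bys) = \mathcal{L}(\bx_{s},\by_{s}^{*})$; symmetrically, the right inequality gives $\inf_{\bx}\mathcal{L}(\bx,\by_{s}^{*}) = \mathcal{L}(\bx_{s},\by_{s}^{*})$. Combining these with the two identities shows that the primal objective at $\bx_{s}$ and the dual objective at $\by_{s}^{*}$ both equal $\mathcal{L}(\bx_{s},\by_{s}^{*})$. I would then close the argument with a sandwich: for any competitor $\bx$,
\[
g(\bx) + h(\matr{A}\bx) = \sup_{\bys}\mathcal{L}(\bx,\bys) \geqslant \mathcal{L}(\bx,\by_{s}^{*}) \geqslant \mathcal{L}(\bx_{s},\by_{s}^{*}) = g(\bx_{s}) + h(\matr{A}\bx_{s}),
\]
which shows $\bx_{s}$ solves the primal problem~\eqref{eq:fact:primal_prob}; the symmetric chain with the roles of $\bx$ and $\bys$ interchanged shows $\by_{s}^{*}$ solves the dual problem~\eqref{eq:fact:dual_prob}. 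Notably, this direction needs no constraint qualification.

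For the reverse implication, suppose $\bx_{s}$ solves the primal problem and $\by_{s}^{*}$ solves the dual problem. I would invoke Fact~\ref{fact:primal_dual_prob} to extract the optimality conditions $-\matr{A}^{*}\by_{s}^{*} \in \partial g(\bx_{s})$ and $\by_{s}^{*} \in \partial h(\matr{A}\bx_{s})$. Written as a subgradient inequality (Definition~\ref{def:subgrad}), the first condition rearranges directly into $\mathcal{L}(\bx_{s},\by_{s}^{*}) \leqslant \mathcal{L}(\bx,\by_{s}^{*})$ for all $\bx$, which is the right saddle inequality. For the left inequality, I would use the conjugate-subdifferential equivalence $\by_{s}^{*} \in \partial h(\matr{A}\bx_{s}) \Longleftrightarrow \matr{A}\bx_{s} \in \partial h^{*}(\by_{s}^{*})$ (valid for $h \in \gmycal$) and apply the subgradient inequality for $h^{*}$ at $\by_{s}^{*}$, which rearranges into $\mathcal{L}(\bx_{s},\bys) \leqslant \mathcal{L}(\bx_{s},\by_{s}^{*})$ for all $\bys$. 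Together the two inequalities are exactly the saddle-point condition of Definition~\ref{def:saddle}.

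The main obstacle is the reverse implication: a primal solution and a dual solution form a saddle point only when there is no duality gap, since the optimality conditions force the primal and dual optimal values to coincide. This strong-duality requirement is precisely what the continuity (constraint-qualification) hypothesis in Fact~\ref{fact:primal_dual_prob} supplies, and without it the equivalence can fail. Apart from this point, all remaining manipulations are routine rearrangements of the Fenchel--Young and subgradient inequalities.
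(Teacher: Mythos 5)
Your proof is correct, and it is genuinely different from the paper's, which does not argue at all: the paper disposes of this fact by citing Proposition 3.1 of Ekeland--Temam. Your forward direction, via the partial-optimization identities $\sup_{\bys \in \ycal^*}\mathcal{L}(\bx,\bys) = g(\bx) + h^{**}(\matr{A}\bx) = g(\bx)+h(\matr{A}\bx)$ (biconjugation, using $h \in \gmycal$ and reflexivity) and $\inf_{\bx \in \xcal}\mathcal{L}(\bx,\bys) = -g^*(-\matr{A}^*\bys) - h^*(\bys)$, together with the sandwich chain, is airtight and — as you note — unconditional; it even yields zero duality gap as a byproduct, matching Ekeland--Temam. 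Your reverse direction, extracting the optimality conditions $-\matr{A}^*\by_{s}^{*} \in \partial g(\bx_{s})$ and $\by_{s}^{*} \in \partial h(\matr{A}\bx_{s})$ from Fact~\ref{fact:primal_dual_prob} and rearranging the two subgradient inequalities (with the flip $\by_{s}^{*} \in \partial h(\matr{A}\bx_{s}) \Leftrightarrow \matr{A}\bx_{s} \in \partial h^*(\by_{s}^{*})$, valid for $h \in \gmycal$ on a reflexive space) into the two halves of the saddle condition, is also sound — but it silently imports the hypotheses of Fact~\ref{fact:primal_dual_prob}, in particular the continuity point of $h$ on $\matr{A}(\xcal)$, which does not appear in the statement being proved. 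You flagged this yourself, and rightly so: as literally stated, the ``if'' direction of the equivalence is false in general, since a primal solution and a dual solution need not form a saddle point when there is a duality gap; the cited Proposition 3.1 of Ekeland--Temam in fact includes the clause $\inf(\mathcal{P}) = \sup(\mathcal{P}^*)$ in its equivalence, which the paper's transcription drops. In the paper's usage this is harmless because assumption (A1) supplies exactly the needed qualification. One small simplification worth knowing: once zero duality gap is in hand, you can close the reverse direction without any subdifferential calculus, since your two identities give $\sup_{\bys}\mathcal{L}(\bx_{s},\bys) = \inf_{\bx}\mathcal{L}(\bx,\by_{s}^{*}) \leqslant \mathcal{L}(\bx_{s},\by_{s}^{*}) \leqslant \sup_{\bys}\mathcal{L}(\bx_{s},\bys)$, forcing equality throughout — which is precisely the saddle condition and is essentially Ekeland--Temam's own argument; your route buys instead the explicit optimality conditions~\eqref{eq:fact:primal_prob}--\eqref{eq:fact:dual_prob} style characterization that the paper uses elsewhere.
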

\begin{fact}[Properties of Bregman divergences]\label{fact:breg_div_props}
Let $\phi \in \gmxcal$ with $\interior{(\dom \phi)} \neq \varnothing$, let $\bx \in \dom \phi$, and let $\bx',\bxh \in \interior{(\dom \phi)}$. Assume that $\phi$ is differentiable on $\interior{(\dom \phi)}$. Then the Bregman divergence $D_{\phi}$ of $\phi$ satisfies the following properties:
\begin{itemize}
    \item[(i)] The Bregman divergence can be written as $D_{\phi}(\bx,\bx') = \phi(\bx) - \phi(\bx') - \left\langle \nabla \phi(\bx'),\bx-\bx'\right\rangle$.
    
    \item[(ii)] The Bregman divergence $D_{\phi}$ satisfies the three-point identity
    \begin{equation}\label{eq:fact:three_point}
    D_{\phi}(\bx,\bx') = D_{\phi}(\bxh,\bx') + D_{\phi}(\bx,\bxh) + \left\langle \nabla\phi(\bx') - \nabla\phi(\bxh), \bxh - \bx \right\rangle.
    \end{equation}
    
    \item[(iii)] If $\phi$ is essentially strictly convex, then $D_{\phi}(\bx,\bx') = 0$ if and only if $\bx = \bx'$.
    
    \item[(iv)] If $\phi$ is essentially strictly convex, then the function $\bx \mapsto D_{\phi}(\bx,\bx')$ is coercive for every $\bx' \in \interior{(\dom \phi)}$.
    
    \item[(v)] If $\phi$ is supercoercive, then the function $\bx' \mapsto D_{\phi}(\bx,\bx')$ is coercive for every $\bx \in \interior{(\dom \phi)}$.
    
    \item[(vi)] If $\{\bx_k\}_{k = 1}^{+\infty}$ is a sequence in $\interior{(\dom \phi)}$ converging to a point $\bx \in \interior{(\dom \phi)}$, then 
    \begin{equation*}
    \lim_{k \to +\infty} D_{\phi}(\bx,\bx_k) = 0.
    \end{equation*}
    
    \item[(vii)] Assume that $\phi$ is essentially smooth. If $\{\bx_k\}_{k = 1}^{+\infty}$ is a sequence in $\interior{(\dom \phi)}$ converging to a point $\bx_{c} \in \interior{(\dom \phi)}$, then 
    \[
    \lim_{k \to +\infty} D_{\phi}(\bx,\bx_k) = D_{\phi}(\bx,\bx_{c}).
    \]
    
    \item[(viii)] If $\phi$ is $m$-strongly convex with respect to $\normx{\cdot}$, then
    \[
    D_{\phi}(\bx,\bx') \geqslant \frac{m}{2}\normx{\bx-\bx'}^2.
    \]
\end{itemize}
\begin{proof}
See~\cite[Lemma 7.3]{bauschke2001essential} for the proof of (i) and (iii)-(vi). Statement (ii) follows from (i) and a straightforward calculation. Statement (vii) follows from (i) and the continuity of both $\phi$ and $\nabla \phi$ over $\interior{(\dom \phi)}$. Statement (viii) follows from (i) and inequality~\eqref{eq:sc_subdiff_char}.
\end{proof}
\end{fact}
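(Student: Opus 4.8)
The plan is to treat property~(i) as the linchpin, deriving the self-contained items~(ii), (vii), and~(viii) from it by short direct arguments, while deferring the more delicate coercivity and continuity statements~(iii)--(vi) to the Banach-space theory of \citet{bauschke2001essential}.

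First I would establish~(i). Since $\phi$ is assumed differentiable on $\interior{(\dom \phi)}$ and $\bx' \in \interior{(\dom \phi)}$, the subdifferential $\partial\phi(\bx')$ is the singleton $\{\nabla\phi(\bx')\}$, so the maximum in Definition~\ref{def:bregman} collapses to $\left\langle \nabla\phi(\bx'), \bx-\bx'\right\rangle$, giving the stated formula. This is the standard identification of the unique subgradient of a convex function with its gradient at a point of differentiability.

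With~(i) in hand, (ii) is a direct algebraic expansion: I would write out $D_\phi(\bx,\bx')$, $D_\phi(\bxh,\bx')$, and $D_\phi(\bx,\bxh)$ using the formula from~(i) and verify that the claimed combination reproduces $D_\phi(\bx,\bx')$, since every $\phi$-term and every pairing against $\nabla\phi$ cancels. For~(vii), I would invoke essential smoothness (Definition~\ref{def:ess_smooth}), which forces $\phi$ to be differentiable on the open set $\interior{(\dom \phi)}$ and hence $C^1$ there, so that $\nabla\phi$ is continuous; passing to the limit along $\bx_k \to \bx_c$ in the formula from~(i) then yields convergence of $D_\phi(\bx,\bx_k)$ to $D_\phi(\bx,\bx_c)$ by continuity of both $\phi$ and $\nabla\phi$. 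For~(viii), I would apply the strong-convexity subgradient inequality~\eqref{eq:sc_subdiff_char} with the subgradient $\nabla\phi(\bx')$, which is precisely the expression in~(i) bounded below by $\tfrac{m}{2}\normx{\bx-\bx'}^2$.

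For~(iii), nonnegativity follows from the ordinary subgradient inequality~\eqref{eq:conv_subdiff_char} evaluated at $\nabla\phi(\bx')$, and the equality characterization $D_\phi(\bx,\bx') = 0 \iff \bx = \bx'$ follows from the strictness of that inequality granted by essential strict convexity (Definition~\ref{def:ess_conv}). I expect the main obstacle to be~(iv)--(vi): these are genuine coercivity and continuity statements in an infinite-dimensional Banach space that rest on the structural theory of essentially strictly convex and supercoercive functions (and, for~(vi), on local boundedness of the subdifferential) rather than on any short computation. For these I would appeal directly to \citet[Lemma~7.3]{bauschke2001essential}, where~(i) and~(iii)--(vi) are proved in exactly this setting, instead of reconstructing that machinery here.
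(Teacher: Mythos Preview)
Your proposal is correct and matches the paper's own proof almost exactly: the paper likewise cites \cite[Lemma~7.3]{bauschke2001essential} for (i) and (iii)--(vi), derives (ii) from (i) by direct calculation, derives (vii) from (i) together with continuity of $\phi$ and $\nabla\phi$ on $\interior{(\dom\phi)}$, and derives (viii) from (i) and inequality~\eqref{eq:sc_subdiff_char}. The only cosmetic difference is that you spell out the short arguments for (i) and (iii) rather than folding them into the citation.
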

\begin{fact}[Properties of Bregman proximity operators]~\label{fact:breg_prox_prop}
Let $f,\phi \in \gmxcal$ be two functions such that $\dom f \cap \interior{(\dom \phi)} \neq \varnothing$, let $t > 0$, and assume $\phi$ is essentially smooth and essentially strictly convex. In addition, assume that either $f$ is bounded from below or $\phi$ is supercoercive. Then the following properties hold:
\begin{itemize}
    \item[(i)] The proximal operator $\bx' \mapsto \mathrm{prox}_{\left(tf,D_{\phi}\right)}(\bx')$ defined in~\eqref{eq:d_prox_operator} is single-valued on its domain $\interior{(\dom \phi)}$. That is, for every $\bx' \in \interior{(\dom \phi)}$,
    \begin{equation*}
    \mathrm{prox}_{\left(tf,D_{\phi}\right)}(\bx') = \argmin_{\bx \in \xcal}\left\{tf(\bx) + D_{\phi}(\bx,\bx')\right\}.
    \end{equation*}
    Moreover, $\mathrm{prox}_{\left(tf,D_{\phi}\right)}(\bx') \in \dom \partial f \cap \interior{(\dom \phi)}$.
    
    \item [(ii)] For every $\bx \in \dom f$ and $\bx' \in \interior{(\dom \phi)}$, the proximal point $\mathrm{prox}_{\left(tf,D_{\phi}\right)}(\bx')$ satisfies the characterization
    \begin{equation}\label{eq:char_prox_diff}
    f(\bx) - f(\mathrm{prox}_{\left(tf,D_{\phi}\right)}(\bx')) - \frac{1}{t}\left\langle \nabla\phi(\mathrm{prox}_{\left(tf,D_{\phi}\right)}(\bx')) - \nabla\phi(\bx'), \mathrm{prox}_{\left(tf,D_{\phi}\right)}(\bx') - \bx \right\rangle \geqslant 0.
    \end{equation}
    If, in addition, there exists $\gamma_f > 0$ such that the function $\bx \mapsto f(\bx) - \gamma_f \phi(\bx)$ is convex, then this characterization can be strengthened to
    \begin{equation}\label{eq:char_prox_diff_sc}
    \begin{alignedat}{1}
    f(\bx) - f(\mathrm{prox}_{\left(tf,D_{\phi}\right)}(\bx')) &- \frac{1}{t}\left\langle \nabla\phi(\mathrm{prox}_{\left(tf,D_{\phi}\right)}(\bx')) - \nabla\phi(\bx'), \mathrm{prox}_{\left(tf,D_{\phi}\right)}(\bx') - \bx \right\rangle \\
    &\qquad \geqslant \gamma_f D_{\phi}(\bx,\mathrm{prox}_{\left(tf,D_{\phi}\right)}(\bx')).
    \end{alignedat}
    \end{equation}
    
    % \item [(iii)] For every $\bx' \in \interior{(\dom \phi)}$, the proximal point $\mathrm{prox}_{\left(tf,D_{\phi}\right)}(\bx')$ satisfies the subdifferential inclusion
    % \[
    % \left(\frac{\nabla \phi(\bx') - \nabla \phi(\mathrm{prox}_{\left(tf,D_{\phi}\right)}(\bx'))}{t}\right) \in \partial f(\mathrm{prox}_{\left(tf,D_{\phi}\right)}(\bx')).
    % \]
    
    \item[(iii)] For every $\bx \in \dom f$ and $\bx' \in \interior{(\dom \phi)}$,
    \begin{equation}\label{eq:char_prox_mapping}
    \begin{alignedat}{1}
        f(\bx) + \frac{1}{t}D_{\phi}(\bx,\bx') &\geqslant f(\mathrm{prox}_{\left(tf,D_{\phi}\right)}(\bx')) + \frac{1}{t} D_f(\mathrm{prox}_{\left(tf,D_{\phi}\right)}(\bx'),\bx') \\ 
        &\quad + \frac{1}{t}D_f(\bx,\mathrm{prox}_{\left(tf,D_{\phi}\right)}(\bx')).
        \end{alignedat}
    \end{equation}
    If, in addition, there exists $\gamma_f > 0$ such that the function $\bx \mapsto f(\bx) - \gamma_f\phi(\bx)$ is convex, then~\eqref{eq:char_prox_mapping} can be strengthened to
    \begin{equation}\label{eq:char_prox_mapping_sc}
    \begin{alignedat}{1}
        f(\bx) + \frac{1}{t}D_{\phi}(\bx,\bx') &\geqslant f(\mathrm{prox}_{\left(tf,D_{\phi}\right)}(\bx')) + \frac{1}{t}D_f(\mathrm{prox}_{\left(tf,D_{\phi}\right)}(\bx'),\bx') \\
        &\quad + \left(\frac{1}{t} + \gamma_f\right)D_f(\bx,\mathrm{prox}_{\left(tf,D_{\phi}\right)}(\bx')).
    \end{alignedat}
    \end{equation}
\end{itemize}
\begin{proof}
See~\cite[Proposition 3.21-3.23, Theorem 3.24, Corollary 3.25]{bauschke2003bregman} for the proof of statements (i). Statement (ii) follows directly from~\cite[Proposition 2.2, page 38]{ekeland1999convex}. To prove inequality~\eqref{eq:char_prox_mapping} in (iii), use the characterization~\eqref{eq:char_prox_diff} to write
\[
\begin{alignedat}{1}
f(\bx) + \frac{1}{t}D_{\phi}(\bx,\bx') &\geqslant f(\mathrm{prox}_{\left(tf,D_{\phi}\right)}(\bx')) + \frac{1}{t}D_{\phi}(\bx,\bx') \\
&\quad - \frac{1}{t}\left\langle \nabla\phi(\bx') - \nabla\phi(\mathrm{prox}_{\left(tf,D_{\phi}\right)}(\bx')), \mathrm{prox}_{\left(tf,D_{\phi}\right)}(\bx') - \bx \right\rangle.
\end{alignedat}
\]
Then use the three-point identity~\eqref{eq:fact:three_point} with $\bxh = \mathrm{prox}_{\left(tf,D_{\phi}\right)}(\bx'))$ to obtain~\eqref{eq:char_prox_mapping}. The proof of inequality~\eqref{eq:char_prox_mapping_sc} in (iii) is nearly identical, with the exception that the characterization~\eqref{eq:char_prox_diff_sc} is used in place of~\eqref{eq:char_prox_diff}.
\end{proof}
\end{fact}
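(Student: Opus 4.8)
The plan is to prove (i) by the direct method, read off (ii) as the first-order optimality condition at the minimiser, and then obtain (iii) purely algebraically from (ii) and the three-point identity. The analytic weight sits entirely in (i).

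First I would show that the objective $\Phi(\bx) := tf(\bx) + D_{\phi}(\bx,\bx')$ is proper, convex, lower semicontinuous and coercive on $\xcal$, so that the direct method applies. Properness, convexity and lower semicontinuity are inherited from $f \in \gmxcal$ together with the representation $D_{\phi}(\bx,\bx') = \phi(\bx) - \phi(\bx') - \langle \nabla\phi(\bx'), \bx - \bx'\rangle$, which is $\phi$ plus an affine term. For coercivity I would split on the standing hypothesis: if $f$ is bounded below, coercivity of $\Phi$ follows from coercivity of $\bx \mapsto D_{\phi}(\bx,\bx')$, guaranteed by Fact~\ref{fact:breg_div_props}(iv) since $\phi$ is essentially strictly convex; if instead $\phi$ is supercoercive, then $\bx \mapsto D_{\phi}(\bx,\bx')$ is itself supercoercive (by Fact~\ref{fact:supercoercivity} the affine correction is dominated), and since the proper convex lsc function $tf$ admits a continuous affine minorant, $\Phi$ remains coercive. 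Existence of a minimiser $\bxh$ then follows from reflexivity of $\xcal$ by the standard weak-compactness argument: a minimising sequence is bounded, hence has a weakly convergent subsequence by Fact~\ref{fact:weak_conv_bdd_seq}, and $\Phi$ is weakly lower semicontinuous because it is convex and lower semicontinuous.

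The main obstacle is the claim that $\bxh$ lies in $\dom \partial f \cap \interior{(\dom \phi)}$ and is unique, and this is exactly where essential smoothness and essential strict convexity are used. I would argue that the minimiser cannot sit on the boundary of $\dom\phi$: essential smoothness forces $\normx{\nabla\phi(\bx_k)} \to +\infty$ along any interior sequence approaching a boundary point, so the Bregman term has directional derivative $-\infty$ into the interior and no boundary point can be optimal; hence $\bxh \in \interior{(\dom \phi)}$, where $\phi$ is differentiable and (being essentially strictly convex) strictly convex, giving both strict convexity of $\Phi$ near $\bxh$ and therefore uniqueness. I would invoke~\cite{bauschke2003bregman} for the precise reflexive-Banach-space form of these boundary statements. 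At the interior minimiser, the optimality condition $0 \in \partial\Phi(\bxh)$, combined with differentiability of $D_{\phi}(\cdot,\bx')$ at $\bxh$, yields $\nabla\phi(\bx') - \nabla\phi(\bxh) \in t\,\partial f(\bxh)$, which simultaneously shows $\bxh \in \dom\partial f$ and produces the subgradient needed for (ii).

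With $\bxi := \tfrac{1}{t}\bigl(\nabla\phi(\bx') - \nabla\phi(\bxh)\bigr) \in \partial f(\bxh)$, the subgradient inequality for $f$ at $\bxh$ applied to any $\bx \in \dom f$ is precisely~\eqref{eq:char_prox_diff}; under the extra hypothesis that $f - \gamma_f\phi$ is convex I would instead note $\bxi - \gamma_f\nabla\phi(\bxh) \in \partial(f-\gamma_f\phi)(\bxh)$ and apply its subgradient inequality, whose rearrangement reconstitutes a $\gamma_f D_{\phi}(\bx,\bxh)$ term and gives~\eqref{eq:char_prox_diff_sc}. Finally, for (iii) I would substitute the three-point identity of Fact~\ref{fact:breg_div_props}(ii),
\[
D_{\phi}(\bx,\bx') = D_{\phi}(\bxh,\bx') + D_{\phi}(\bx,\bxh) + \langle \nabla\phi(\bx') - \nabla\phi(\bxh), \bxh - \bx\rangle,
\]
into $f(\bx) + \tfrac{1}{t}D_{\phi}(\bx,\bx')$ and bound the cross-term via~\eqref{eq:char_prox_diff}; the $f(\bx)$ contributions cancel and~\eqref{eq:char_prox_mapping} drops out, while using~\eqref{eq:char_prox_diff_sc} in its place yields the sharpened coefficient $\tfrac{1}{t} + \gamma_f$ of~\eqref{eq:char_prox_mapping_sc}. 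These last manipulations are routine, so the entire difficulty is the boundary/regularity analysis in step (i).
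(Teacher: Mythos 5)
Your proposal is correct and takes essentially the same route as the paper: the paper disposes of (i) by citing \cite{bauschke2003bregman} and of (ii) by citing \cite[Proposition 2.2, page 38]{ekeland1999convex}, which are precisely the direct-method existence/boundary/uniqueness argument and the subgradient-inequality derivation you spell out, and your step (iii) is the paper's identical computation combining \eqref{eq:char_prox_diff} (resp.\ \eqref{eq:char_prox_diff_sc}) with the three-point identity \eqref{eq:fact:three_point}. The only substantive difference is that you unpack the two citations into self-contained arguments, and you do so correctly, modulo the slightly loose ``strict convexity near $\bxh$'' phrasing for uniqueness, which should instead observe that any two minimizers lie in $\interior{(\dom \phi)}$, so essential strict convexity of $\phi$ applies along the segment joining them.
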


\begin{comment}
\subsection{Miscellaneous (not for this manuscript)}
If $S(\bx,t)$ denotes the value of the infimum of problem~\eqref{eq:breg_prox_prob} at $\bx = \bx'$, then naturally we would expect
\[
\nabla_{\bx}S(\bx,t) = \left(\frac{\nabla f(\bx) - \nabla f(\hat{\bx}(\bx,t))}{t}\right)
\]
and
\[
\frac{\partial S}{\partial t}(\bx,t) = -\frac{D_f(\hat{\bx}(\bx,t),\bx)}{t^2} .
\]
We also have
\[
\left\langle \nabla_{\bx}S(\bx,t),\frac{\bx - \hat{\bx}(\bx,t)}{t} \right\rangle = \frac{1}{t^2}\left(D_f(\bx,\hat{\bx}(\bx,t)) + D_f(\hat{\bx}(\bx,t),\bx)\right).
\]
(Direct calculation or lemma 2.7 in~\cite{bauschke2003bregman}.
\end{comment}

\section{Proof of Lemma~\ref{lem:descent_rule}} \label{app:A}
We divide the proof into two parts, first proving that the output $(\bxh,\bysh)$ is contained in the set $\dom \partial g \times \dom \partial h^*$ and then deriving the descent rule~\eqref{eq:descent_rule}.

\bigbreak
\noindent
\textbf{Part 1.} Consider the functions \[
f = g + \left\langle \matr{A}^*\byst, \cdot \right\rangle \quad \mathrm{and} \quad \phi = \phix.
\]
By assumptions (A1)-(A3), the function $\phi$ is essentially smooth and essentially strictly convex, we have that $\dom{f} \,\cap\, \interior{(\dom \phi)} \neq \varnothing$, and at least one of $g$ and $\phi$ is supercoercive. If $g$ is supercoercive, then an elementary calculation shows that the function $f$ is also supercoercive, and therefore bounded from below by Fact~\ref{fact:supercoercivity}. Hence we are guaranteed that $f$ is bounded from below or $\phi$ is supercoercive. In either case, we can invoke Fact~\ref{fact:breg_prox_prop}(i) to conclude that the minimization problem
\[
\argmin_{\bx \in \xcal} \left\{g(\bx) + \left\langle \matr{A}^*\byst, \bx\right\rangle + \frac{1}{\tau}D_{\phix}(\bx,\bxb)\right\}
\]
has a unique solution $\bxh$ that is contained in the set $\dom \partial g \,\cap\, \dom\partial \phix$.  A similar argument using assumptions (A1)-(A2) and (A4) shows that the minimization problem
\[
\argmin_{\bys \in \ycal^*} \left\{h^{*}(\bys) - \left\langle \bys, \matr{A}\bxt\right\rangle + \frac{1}{\sigma}D_{\phiys}(\bys,\bysb)\right\}
\]
has a unique solution $\bysh$ that is contained in the set $\dom \partial h^* \cap \dom \partial \phiys$.

\bigbreak
\noindent
\textbf{Part 2.} To derive the descent rule~\eqref{eq:descent_rule}, we apply inequality~\eqref{eq:char_prox_mapping} to each minimization problem in the iteration scheme~\eqref{eq:iteration_PG}. Note that inequality~\eqref{eq:char_prox_mapping} can be used here because we showed in Part 1 that the conditions of Fact~\ref{fact:breg_prox_prop} are satisfied by each minimization problem in~\eqref{eq:iteration_PG}. 

First, use inequality~\eqref{eq:char_prox_mapping} with the functions
\[
f = g + \left\langle \matr{A}^*\byst, \cdot \right\rangle \quad \mathrm{and} \quad \phi = \phix ,
\]
the parameter $t = \tau$, the element $\bx' = \bxb$, and the proximal point $\mathrm{prox}_{\left(tf,D_{\phi}\right)}(\bx') = \bxh$ to get
\begin{equation*}
g(\bx) + \left\langle \matr{A}^*\byst, \bx \right\rangle + \frac{1}{\tau}D_{\phix}(\bx,\bar{\bx})
\geqslant g(\hat{\bx}) + \left\langle \matr{A}^*\byst,\bxh \right\rangle + \frac{1}{\tau}\left(D_{\phix}(\bxh,\bxb) + D_{\phix}(\bx,\hat{\bx})\right).
\end{equation*}
Rearrange this inequality in terms of the difference $g(\bxh) - g(\bx)$ to get
\begin{equation}\label{eq:lem:ineq2}
g(\bxh) - g(\bx) \leqslant \frac{1}{\tau}\left(D_{\phix}(\bx,\bar{\bx}) - D_{\phix}(\bxh,\bxb) - D_{\phix}(\bx,\bxh)\right) + \left\langle \matr{A}^*\byst, \bx - \bxh \right\rangle.
\end{equation}
A similar application of inequality~\eqref{eq:char_prox_mapping} to the second line of the iteration scheme~\eqref{eq:iteration_PG} gives
\begin{equation}\label{eq:lem:ineq3}
h^*(\bysh) - h^{*}(\bys) \leqslant \frac{1}{\sigma}\left(D_{\phiys}(\bys,\bysb) - D_{\phiys}(\bysh,\bysb) - D_{\phiys}(\bys,\bysh)\right) - \left\langle \bys -\bysh,\matr{A}\bxt \right\rangle.
\end{equation}

Second, add the difference of bilinear forms
\[
\left\langle \bys, \matr{A}\bxh \right\rangle - \left\langle \bysh, \matr{A}\bx\right\rangle
\]
to both sides of inequality~\eqref{eq:lem:ineq3} and rearrange to get
\begin{equation}\label{eq:lem:ineq3_mod}
\begin{alignedat}{1}
\left(\left\langle \bys, \matr{A}\bxh \right\rangle - h^*(\bys)\right) - \left(\left\langle \bysh,\matr{A}\bx \right\rangle - h^*(\bysh)\right) &= \frac{1}{\sigma}\left(D_{\phiys}(\bys,\bysb) - D_{\phiys}(\bysh,\bysb) - D_{\phiys}(\bys,\bysh)\right) \\
&\quad + \left\langle \bys, \matr{A}\bxh \right\rangle - \left\langle \bysh, \matr{A}\bx\right\rangle - \left\langle \bys -\bysh,\matr{A}\bxt \right\rangle \\
&= \frac{1}{\sigma}\left(D_{\phiys}(\bys,\bysb) - D_{\phiys}(\bysh,\bysb) - D_{\phiys}(\bys,\bysh)\right) \\
&\quad + \left\langle \bys, \matr{A}(\bxh - \bxt) \right\rangle - \left\langle\bysh, \matr{A}(\bx - \bxt) \right\rangle.
\end{alignedat}
\end{equation}

Next, we combine inequalities~\eqref{eq:lem:ineq2} and~\eqref{eq:lem:ineq3_mod}. Add the left hand sides of inequalities~\eqref{eq:lem:ineq2} and~\eqref{eq:lem:ineq3_mod} and use the definition~\eqref{eq:lagrangian} of the Lagrangian function $\mathcal{L}(\cdot,\cdot)$ to get
\begin{equation} \label{eq:lem:ineq4_lhs}
\left(g(\bxh) + \left\langle \bys, \matr{A}\bxh \right\rangle - h^*(\bys)\right) - \left(g(\bx) + \left\langle \bysh,\matr{A}\bx \right\rangle - h^*(\bysh)\right) = \mathcal{L}(\bxh,\bys) - \mathcal{L}(\bx,\bysh).
\end{equation}
Thanks to~\eqref{eq:lem:ineq4_lhs}, the sum of inequalities~\eqref{eq:lem:ineq2} and~\eqref{eq:lem:ineq3_mod} give
\begin{equation}\label{eq:lem:ineq4}
\begin{alignedat}{1}
\mathcal{L}(\bxh,\bys) - \mathcal{L}(\bx,\bysh) &\leqslant \frac{1}{\tau}\left(D_{\phix}(\bx,\bar{\bx}) - D_{\phix}(\bxh,\bxb) - D_{\phix}(\bx,\bxh)\right) \\
&\quad + \frac{1}{\sigma}\left(D_{\phiys}(\bys,\bysb) - D_{\phiys}(\bysh,\bysb) - D_{\phiys}(\bys,\bysh)\right) \\
&\quad + \left\langle \matr{A}^*\byst, \bx - \bxh \right\rangle + \left\langle \bys, \matr{A}(\bxh - \bxt) \right\rangle - \left\langle\bysh, \matr{A}(\bx - \bxt) \right\rangle.
\end{alignedat}
\end{equation}
Now, write
\[
\begin{alignedat}{1}
\left\langle \matr{A}^*\byst, \bx - \bxh \right\rangle &= \left\langle \byst, \matr{A}(\bx - \bxh) \right\rangle \\
&= \left\langle \byst, \matr{A}(\bx  - \bxh + \bxt - \bxt) \right\rangle \\
&= \left\langle \byst, \matr{A}(\bx  - \bxt)\right\rangle - \left\langle \byst, \matr{A}(\bxh  - \bxt)\right\rangle
\end{alignedat}
\]
and use this to express the last line on the right hand side of inequality~\eqref{eq:lem:ineq4} as
\begin{equation}\label{eq:lem:ineq5}
\begin{alignedat}{1}
\left\langle \matr{A}^*\byst, \bx - \bxh \right\rangle + \left\langle \bys, \matr{A}(\bxh - \bxt) \right\rangle - \left\langle\bysh, \matr{A}(\bx - \bxt) \right\rangle &= \left\langle \byst, \matr{A}(\bx  - \bxt)\right\rangle - \left\langle \byst, \matr{A}(\bxh  - \bxt)\right\rangle \\
&\quad + \left\langle \bys, \matr{A}(\bxh - \bxt) \right\rangle - \left\langle\bysh, \matr{A}(\bx - \bxt) \right\rangle \\
&= \left\langle \byst - \bysh, \matr{A}(\bx  - \bxt)\right\rangle - \left\langle \bys - \byst, \matr{A}(\bxt - \bxh) \right\rangle.
\end{alignedat}
\end{equation}
Finally, combine inequalities~\eqref{eq:lem:ineq4} and~\eqref{eq:lem:ineq5} to find
\begin{equation*}
\begin{alignedat}{1}
\mathcal{L}(\bxh,\bys) - \mathcal{L}(\bx,\bysh) &\leqslant \frac{1}{\tau}\left(D_{\phix}(\bx,\bar{\bx}) - D_{\phix}(\bxh,\bxb) - D_{\phix}(\bx,\bxh)\right) \\
&\quad + \frac{1}{\sigma}\left(D_{\phiys}(\bys,\bysb) - D_{\phiys}(\bysh,\bysb) - D_{\phiys}(\bys,\bysh)\right) \\
&\quad + \left\langle \byst - \bysh, \matr{A}(\bx  - \bxt)\right\rangle - \left\langle \bys - \byst, \matr{A}(\bxt - \bxh) \right\rangle.
\end{alignedat}
\end{equation*}
which is the desired result.

\section{Proof of Proposition~\ref{prop:basic_algI}} \label{app:B}
We divide the proofs into four parts, first deriving an auxiliary result, and then proving in turn the descent rule~\eqref{eq:descent_rule_s_I} (Proposition~\ref{prop:basic_algI}(a)), the estimate~\eqref{eq:basic_rateI} and the global bound~\eqref{eq:bound_seq} (Proposition~\ref{prop:basic_algI}(b)), and the convergence properties of the nonlinear PDHG method~\eqref{eq:basic_pdhg_alg} (Proposition~\ref{prop:basic_algI}(c)).

\bigbreak
\noindent
\textbf{Part 1.} We first show that for every $(\bx,\by^{*}) \in \dom g \times \dom h^*$ and nonnegative integer $k$, the quantity $\Delta_{k}(\bx,\by^{*})$ satisfies the bounds
\begin{equation}\label{eq:innerprod_param_ineq2}
\begin{alignedat}{1}
    0 \leqslant (1 - \sqrt{\tau\sigma}\normop{\matr{A}}) &\left(\frac{1}{\tau}D_{\phix}(\bx,\bx_k) + \frac{1}{\sigma}D_{\phiys}(\bys,\by_{k}^{*})\right) \\
    &\qquad \leqslant \Delta_{k}(\bx,\by^{*}) \leqslant (1 + \sqrt{\tau\sigma}\normop{\matr{A}})\left(\frac{1}{\tau}D_{\phix}(\bx,\bx_{k}) + \frac{1}{\sigma}D_{\phiys}(\bys,\by_{k}^{*})\right).
    \end{alignedat}
\end{equation}
To derive this, use fact~\ref{fact:cauchy_ineq_cont} with the choice of $\alpha = \sqrt{\sigma/\tau}$ and $(\bx',\bys') = (\bx_k,\by_{k}^{*})$ to get
\begin{equation}\label{eq:innerprod_param_ineq}
\begin{alignedat}{1}
\left |\left\langle \bys-\by_{k}^{*} ,\matr{A}(\bx-\bx_k\right\rangle \right | &\leqslant \normop{\matr{A}}\left(\frac{\sqrt{\sigma}}{2\sqrt{\tau}}\normx{\bx-\bx_k}^2 + \frac{\sqrt{\tau}}{2\sqrt{\sigma}}\normys{\bys - \by_{k}^{*}}^2\right) \\
&= \sqrt{\tau\sigma}\normop{\matr{A}}\left(\frac{1}{2\tau}\normx{\bx-\bx_k}^2 + \frac{1}{2\sigma}\normys{\bys - \by_{k}^{*}}^2\right) \\
&\leqslant \sqrt{\tau\sigma}\normop{\matr{A}}\left(\frac{1}{\tau}D_{\phix}(\bx,\bx_k) + \frac{1}{\sigma}D_{\phiys}(\bys,\by_{k}^{*})\right),
\end{alignedat}
\end{equation}
where in the last line we used assumption (A5) and Fact~\ref{fact:breg_div_props}(viii) with $ m= 1$. Inequality~\eqref{eq:innerprod_param_ineq2} then follows from equation~\eqref{eq:basic_d_symbol} and inequalities~\eqref{eq:innerprod_param_ineq} and~\eqref{eq:suff_param_ineq}.

\bigbreak
\noindent
\textbf{Part 2.} Let $(\bx,\bys) \in \dom g \times \dom h^*$. By assumption (A1)-(A4), Lemma~\ref{lem:descent_rule} holds, and we can apply the descent rule~\eqref{eq:descent_rule} to the $(k+1)^{\mathrm{th}}$ iterate given by~\eqref{eq:basic_pdhg_alg} with initial points $(\bxb,\bysb) = (\bx_{k},\by_{k}^{*})$ and intermediate points $(\bxt,\byst) = (2\bx_{k+1}-\bx_{k},\by_{k}^{*})$ to get
\begin{equation}\label{eq:prop1:ineq1}
\begin{alignedat}{1}
\mathcal{L}(\bx_{k+1},\bys) - \mathcal{L}(\bx,\by_{k+1}^{*}) &\leqslant \frac{1}{\tau}\left(D_{\phix}(\bx,\bx_{k}) - D_{\phix}(\bx_{k+1},\bx_{k}) - D_{\phix}(\bx,\bx_{k+1})\right)\\
&\qquad + \frac{1}{\sigma}\left(D_{\phiys}(\bys,\by_{k}^{*}) - D_{\phiys}(\by_{k+1}^{*},\by_{k}^{*}) - D_{\phiys}(\bys,\by_{k+1}^{*}) \right) \\
&\qquad + \left\langle \by_{k}^{*} - \by_{k+1}^{*}, \matr{A}(\bx  - (2\bx_{k+1}-\bx_{k}))\right\rangle \\
&\qquad - \left\langle \bys - \by_{k}^{*}, \matr{A}((2\bx_{k+1}-\bx_{k}) - \bx_{k+1}) \right\rangle.
\end{alignedat}    
\end{equation}
To proceed, we want to rewrite the last two lines of~\eqref{eq:prop1:ineq1} to simplify the analysis. First, write the penultimate line on the right hand side of~\eqref{eq:prop1:ineq1} as
\begin{equation}\label{eq:prop1:calc1}
\begin{alignedat}{1}
\left\langle \by_{k}^{*} - \by_{k+1}^{*}, \matr{A}(\bx  - (2\bx_{k+1}-\bx_{k}))\right\rangle &= \left\langle \by_{k}^{*} - \by_{k+1}^{*}, \matr{A}(\bx  - \bx_{k+1}) - \matr{A}(\bx_{k+1}  - \bx_{k})\right\rangle\\
&= \left\langle \by_{k}^{*} - \by_{k+1}^{*}, \matr{A}(\bx  - \bx_{k+1})\right\rangle \\
&\qquad + \left\langle \by_{k+1}^{*} - \by_{k}^{*}, \matr{A}(\bx_{k+1} - \bx_{k})\right\rangle \\
&= \left\langle \bys- \by_{k+1}^{*}, \matr{A}(\bx  - \bx_{k+1})\right\rangle\\
&\qquad - \left\langle \bys- \by_{k}^{*}, \matr{A}(\bx  - \bx_{k+1})\right\rangle\\
&\qquad + \left\langle \by_{k+1}^{*} - \by_{k}^{*}, \matr{A}(\bx_{k+1} - \bx_{k})\right\rangle \\
&= \left\langle \bys- \by_{k+1}^{*}, \matr{A}(\bx  - \bx_{k+1})\right\rangle \\
&\qquad + \left\langle \bys - \by_{k}^{*}, \matr{A}(\bx_{k+1}  - \bx_{k})\right\rangle \\
&\qquad - \left\langle \bys - \by_{k}^{*}, \matr{A}(\bx  - \bx_{k})\right\rangle\\
&\qquad + \left\langle \by_{k+1}^{*} - \by_{k}^{*}, \matr{A}(\bx_{k+1} - \bx_{k})\right\rangle,
\end{alignedat}
\end{equation}
The bilinear form on the last line of~\eqref{eq:prop1:ineq1} simplifies to
\begin{equation}\label{eq:prop1:calc2}
\left\langle \bys - \by_{k}^{*}, \matr{A}((2\bx_{k+1}-\bx_{k}) - \bx_{k+1}) \right\rangle = \left\langle \bys - \by_{k}^{*}, \matr{A}(\bx_{k+1} - \bx_{k}) \right\rangle.
\end{equation}
Combine equations~\eqref{eq:basic_d_symbol},~\eqref{eq:prop1:calc1} and~\eqref{eq:prop1:calc2} together to write inequality~\eqref{eq:prop1:ineq1} as
\begin{equation*}
\mathcal{L}(\bx_{k+1},\bys) - \mathcal{L}(\bx,\by_{k+1}^{*}) \leqslant \Delta_{k}(\bx,\by^{*}) - \Delta_{k+1}(\bx,\by^{*}) - \Delta_{k+1}(\bx_{k},\by_{k}^{*}).
\end{equation*}
Thanks to inequality~\eqref{eq:innerprod_param_ineq2}, 
\[
- \Delta_{k+1}(\bx_{k},\by_{k}^{*}) \leqslant 0,
\]
and hence
\begin{equation*}
\mathcal{L}(\bx_{k+1},\bys) - \mathcal{L}(\bx,\by_{k+1}^{*}) \leqslant \Delta_{k}(\bx,\by^{*}) - \Delta_{k+1}(\bx,\by^{*}).
\end{equation*}
This proves the descent rule~\eqref{eq:descent_rule_s_I}. 

\bigbreak
\noindent
\textbf{Part 3.} Sum inequality~\eqref{eq:descent_rule_s_I} from $k=1$ to $K$ on both sides to obtain
\begin{equation}\label{eq:prop1:ineq4}
\begin{alignedat}{1}
\sum_{k=1}^{K} \mathcal{L}(\bx_{k},\bys) - \mathcal{L}(\bx,\by_{k}^{*}) &\leqslant \Delta_{0}(\bx,\by^{*}) - \Delta_{K}(\bx,\by^{*}).
\end{alignedat}
\end{equation}
Use the averages
\[
\bX_{K} = \frac{1}{K}\sum_{k=1}^{K} \bx_{k} \quad \mathrm{and} \quad \bY_{K}^{*} = \frac{1}{K}\sum_{k=1}^{K} \by_{k}^{*},
\]
the convexity and concavity in the first and second arguments of the Lagrangian~\eqref{eq:lagrangian}, respectively, and inequality~\eqref{eq:prop1:ineq4} to bound the difference of Lagrangians $\mathcal{L}(\bX_{K}, \bys) - \mathcal{L}(\bx,\bY_{K}^{*})$ as follows:
\begin{equation}\label{eq:prop1:ave_ineq}
\begin{alignedat}{1}
\mathcal{L}(\bX_{K}, \bys) - \mathcal{L}(\bx,\bY_{K}^{*}) &\leqslant \frac{1}{K}\sum_{k=1}^{K}\left(\mathcal{L}(\bx_{k},\bys) - \mathcal{L}(\bx,\by_{k}^{*})\right) \\
&\leqslant \frac{1}{K}\left(\Delta_{0}(\bx,\by^{*}) - \Delta_{K}(\bx,\by^{*})\right).
\end{alignedat}
\end{equation}
Finally, use the lower and upper bounds~\eqref{eq:innerprod_param_ineq2} in~\eqref{eq:prop1:ave_ineq} to get 
\[
\begin{alignedat}{1}
\mathcal{L}(\bX_{K}, \bys) - \mathcal{L}(\bx,\bY_{K}^{*}) &\leqslant \frac{1+\sqrt{\tau\sigma}\normop{\matr{A}}}{K}\left(\frac{1}{\tau}D_{\phix}(\bx,\bx_{0}) + \frac{1}{\sigma}D_{\phiys}(\bys,\by_{0}^{*})\right) \\
&\qquad - \frac{1-\sqrt{\tau\sigma}\normop{\matr{A}}}{K}\left(\frac{1}{\tau}D_{\phix}(\bx,\bx_{K}) + \frac{1}{\sigma}D_{\phiys}(\bys,\by_{K}^{*})\right).
\end{alignedat}
\]
This proves the estimate~\eqref{eq:basic_rateI}.

Now, let $(\bx,\bys) = (\bx_{s},\by_{s}^{*})$ in estimate~\eqref{eq:basic_rateI} and use the saddle point property 
\[
\mathcal{L}(\bX_{K}, \by_{s}^{*}) - \mathcal{L}(\bx_{s},\bY_{K}^{*}) \geqslant 0
\]
and rearrange to get
\[
\begin{alignedat}{1}
(1-\sqrt{\tau\sigma}\normop{\matr{A}}) &\left(\frac{1}{\tau}D_{\phix}(\bx_{s},\bx_{K}) + \frac{1}{\sigma}D_{\phiys}(\by_{s}^{*},\by_{K}^{*})\right) \\
&\qquad \leqslant (1+\sqrt{\tau\sigma}\normop{\matr{A}})\left(\frac{1}{\tau}D_{\phix}(\bx_{s},\bx_{0}) + \frac{1}{\sigma}D_{\phiys}(\by_{s}^{*},\by_{0}^{*})\right).
\end{alignedat}
\]
Since $\tau\sigma\normop{\matr{A}}^2 < 1$, the number $(1-\sqrt{\tau\sigma}\normop{\matr{A}})$ is strictly positive and we can divide both sides of the previous inequality by $(1-\sqrt{\tau\sigma}\normop{\matr{A}})$ to get
\[
\frac{1}{\tau}D_{\phix}(\bx_{s},\bx_{K}) + \frac{1}{\sigma}D_{\phiys}(\by_{s}^{*},\by_{K}^{*}) \leqslant \frac{1+\sqrt{\tau\sigma}\normop{\matr{A}}}{1-\sqrt{\tau\sigma}\normop{\matr{A}}}\left(\frac{1}{\tau}D_{\phix}(\bx_{s},\bx_{0}) + \frac{1}{\sigma}D_{\phiys}(\by_{s}^{*},\by_{0}^{*})\right).
\]
This proves inequality~\eqref{eq:bound_seq}.

\bigbreak
\noindent
\textbf{Part 4.} First, note that the global bound~\eqref{eq:bound_seq} implies that the sequence of iterates $\{(\bx_{k},\by_{k}^{*})\}_{k=1}^{+\infty}$ is bounded. It follows immediately from the definitions of the averages $\bX_{K}$ and $\bY_{K}^{*}$ that the sequence of averages $\{(\bX_{K},\bY_{K}^{*})\}_{K=1}^{+\infty}$ is also bounded.

From Fact~\ref{fact:weak_conv_bdd_seq}, there is a subsequence $\{(\bX_{K_l},\bY_{K_l}^{*})\}_{l=1}^{+\infty}$ that converges weakly to some point $(\bX,\bYs) \in \xcal \times \ycals$. We claim that $(\bX,\bYs)$ is a saddle point of the Lagrangian~\eqref{eq:lagrangian}. To see this, use inequality~\eqref{eq:basic_rateI} with $(\bx,\bys) \in \dom g \times \dom h^*$ and take the infimum limit $l \to +\infty$ to get
\[
\liminf_{l \to +\infty} \left[\mathcal{L}(\bX_{K_l},\bys) - \mathcal{L}(\bx,\bY_{K_l}^{*})\right] \leqslant \liminf_{l \to +\infty} \left[ \frac{1+\sqrt{\tau\sigma}\normop{\matr{A}}}{K_l}\left( \frac{1}{\tau} D_{\phix}(\bx,\bx_{0}) + \frac{1}{\sigma} D_{\phiys}(\bys,\by_{0}^{*})\right) \right] = 0.
\]
The lower semicontinuity property of the functions $g$ and $h^*$ implies
\[
0 \geqslant \liminf_{l \to +\infty} \left(\mathcal{L}(\bX_{K_l},\bys) - \mathcal{L}(\bx,\bY_{K_l}^{*})\right) \geqslant \mathcal{L}(\bX,\bys) - \mathcal{L}(\bx,\bYs),
\]
from which we find
\[
\mathcal{L}(\bX,\bys) \leqslant \mathcal{L}(\bx,\bYs).
\]
As the pair of points $(\bx,\bys) \in \xcal\times\ycals$ was arbitrary, we conclude that $(\bX,\bYs)$ is a saddle point of the Lagrangian~\eqref{eq:lagrangian}.

Assume now that the spaces $\xcal$ and $\ycals$ are finite-dimensional. Since the sequence of iterates $\{(\bx_{k},\by_{k}^{*})\}_{k=1}^{+\infty}$ is bounded, by Fact~\ref{fact:weak_conv_bdd_seq} there is a subsequence that converges strongly to some point $(\bx_c,\by_{c}^{*})$. Note that $(\bx_c,\by_{c}^{*})$ is a fixed point of the nonlinear PDHG method~\eqref{eq:basic_pdhg_alg}, and therefore we can invoke Lemma~\ref{lem:descent_rule} to conclude that $(\bx_c,\by_{c}^{*}) \in \dom \partial g \times \dom \partial h^{*}$. 

We claim that $(\bx_c,\by_{c}^{*})$ is a saddle point of the Lagrangian~\eqref{eq:lagrangian}. To see this, consider the descent rule~\eqref{eq:descent_rule_s_I} with arbitrary $(\bx,\by^{*})$ and the subsequence $\{(\bx_{k_l},\by_{k_l}^{*})\}_{l=1}^{+\infty}$:
\begin{equation*}
\mathcal{L}(\bx_{k_l},\by^{*}) - \mathcal{L}(\bx,\by_{k_l}^{*}) \leqslant \Delta_{k_l}(\bx,\bys) - \Delta_{k_{l+1}}(\bx,\bys).
\end{equation*}
By the strong convergence of the subsequence $\{(\bx_{k_l},\by_{k_l}^{*})\}_{l=1}^{+\infty}$ to $(\bx_c,\by_{c}^{*})$ and Fact~\ref{fact:breg_div_props}(vii), we have the limits 
\[
\lim_{l\to +\infty} D_{\phix}(\bx,\bx_{k_{l}}) = D_{\phix}(\bx,\bx_c) \quad \mathrm{and} \quad \lim_{l\to +\infty} D_{\phiys}(\by^{*},\by_{k_{l}}^{*}) = D_{\phiys}(\by^{*},\by_{c}^{*}).
\]
Hence
\[
\lim_{l \to +\infty} \Delta_{k_l}(\bx,\by^{*}) = \frac{1}{\tau}D_{\phix}(\bx,\bx_c) + \frac{1}{\sigma} D_{\phiys}(\by^{*},\by_{c}^{*}) - \left\langle \bys - \by_{c}^{*}, \matr{A}(\bx  - \bx_{c})\right\rangle,
\]
and we conclude, from the completeness property of the real numbers, that
\[
\liminf_{l \to +\infty} \Delta_{k_l}(\bx,\by^{*}) - \Delta_{k_{l+1}}(\bx,\by^{*}) = 0.
\]
We therefore deduce the infimum limit
\[
\liminf_{l \to +\infty} \mathcal{L}(\bx_{k_l},\by^{*}) - \mathcal{L}(\bx,\by_{k_l}^{*}) \leqslant 0.
\]
The lower semicontinuity property of the functions $g$ and $h^*$ implies
\[
0 \geqslant \liminf_{l \to +\infty} \left(\mathcal{L}(\bx_{k_l},\bys) - \mathcal{L}(\bx,\by_{k_l}^{*})\right) \geqslant \mathcal{L}(\bx_c,\bys) - \mathcal{L}(\bx,\by_{c}^{*}),
\]
from which we find
\[
\mathcal{L}(\bx,\by_{c}^{*}) \leqslant \mathcal{L}(\bx_c,\by^{*}).
\]
As the pair of points $(\bx,\bys) \in \xcal\times\ycals$ was arbitrary, we conclude that $(\bx_c,\by_{c}^{*})$ is a saddle point of the Lagrangian~\eqref{eq:lagrangian}. 

It remains to prove that the sequence of iterates $\left\{(\bx_{k},\by_{k})\right\}_{k=1}^{+\infty}$ converges strongly to the saddle point $(\bx_c,\by_{c}^{*})$. To do so, consider the descent rule~\eqref{eq:descent_rule_s_I} with the choice of saddle point $(\bx,\by^{*}) = (\bx_c,\by_{c}^{*})$. From the saddle-point property $\mathcal{L}(\bx_{k},\by_{c}^{*}) - \mathcal{L}(\bx_{c},\by_{k}^{*}) \geqslant 0$ and inequality~\eqref{eq:innerprod_param_ineq2}, we have
\[
0 \leqslant \Delta_{k}(\bx_{c},\by_{c}^{*}) \leqslant \Delta_{k-1}(\bx_{c},\by_{c}^{*}).
\]
The sequence of real numbers $\{\Delta_{k}(\bx_{c},\by_{c}^{*})\}_{k=1}^{+\infty}$ is non-increasing in $l$, and as such, it has a limit. By Lemma~\ref{lem:descent_rule}, Fact~\ref{fact:breg_div_props}(vi), and the strong convergence of the subsequence $\{(\bx_{k_l},\by_{k_l}^{*})\}_{l=1}^{+\infty}$ to $(\bx_c,\by_{c}^{*})$, we have
\[
\lim_{l \to +\infty}D_{\phix}(\bx_c,\bx_{k_l}) = 0, \quad \lim_{l \to +\infty}D_{\phiys}(\by_{c}^{*},\by_{k_l}^{*}) = 0, \quad \mathrm{and} \quad \lim_{l \to +\infty} \left\langle \by_{c}^{*}- \by_{k_l}^{*}, \matr{A}(\bx_{c}  - \bx_{k_l})\right\rangle = 0.
\]
We deduce the limit
\[
\lim_{k \to +\infty} \Delta_{k}(\bx_{c},\by_{c}^{*}) = 0.
\]
Now, from this limit and the lower bound~\eqref{eq:innerprod_param_ineq2} with $(\bx,\by) = (\bx_{c},\by_{c}^{*})$, we have
\[
0 \leqslant \lim_{k \to +\infty}(1-\sqrt{\tau\sigma}\normop{\matr{A}})\left(\frac{1}{\tau}D_{\phix}(\bx_c,\bx_{k}) + \frac{1}{\sigma}D_{\phiys}(\by_{c}^{*},\by_{k}^{*})\right) \leqslant \lim_{k \to +\infty} \Delta_{k}(\bx_{c},\by_{c}^{*}) = 0.
\]
Since $(1-\sqrt{\tau\sigma}\normop{\matr{A}}) > 0$ and assumption (A5) holds, we deduce the limits
\[
0 \leqslant \lim_{k \to +\infty} \frac{1}{2}\normx{\bx_{c}-\bx_{k}}^2 \leqslant \lim_{k \to +\infty} D_{\phix}(\bx_c,\bx_{k}) = 0
\]
and
\[
0 \leqslant \lim_{k \to +\infty} \frac{1}{2}\normys{\by_{c}^{*}-\by_{k}^{*}}^2 \leqslant  \lim_{k \to +\infty} D_{\phiys}(\by_{c}^{*},\by_{k}^{*}) = 0.
\]
This proves the strong convergence of the sequence of iterates $\{(\bx_{k},\by_{k}^{*})\}_{k=1}^{+\infty}$ to the saddle point $(\bx_c,\by_{c}^{*})$. Finally, we deduce from Fact~\ref{fact:weighted_averages} that the sequence of averages $\{(\bX_{K},\bY_{K}^{*})\}_{K=1}^{+\infty}$ converges strongly to the same limit $(\bx_c,\by_{c}^{*})$. This concludes the proof.

\section{Proof of Proposition~\ref{prop:acc_sq_algI}} \label{app:C}
We divide the proof into five parts, first deriving an auxiliary result, and then proving in turn the descent rule~\eqref{eq:descent_rule_s_acc_I} (Proposition~\ref{prop:acc_sq_algI}(a)), the estimate~\eqref{eq:acc_sq__rateI} and global bound~\eqref{eq:bound_seq_acc_I} (Proposition~\ref{prop:acc_sq_algI}(b)), formula~\eqref{eq:acc_exact_t_avg} and the bounds~\eqref{eq:acc_I_avg_bounds} (Proposition~\ref{prop:acc_sq_algI}(c)), and the convergence properties of the accelerated nonlinear PDHG method~\eqref{eq:accI_pdhg_alg} (Proposition~\ref{prop:acc_sq_algI}(d)).

\bigbreak
\noindent
\textbf{Part 1.} We first show that for every $(\bx,\by^{*}) \in \dom g \times \dom h^*$ and $k \in \mathbb{N}$, the quantity $\Delta_{k}(\bx,\by^{*})$ satisfies the lower bound
\begin{equation}\label{eq:bound_delta_below}
\Delta_{k}(\bx,\by^{*}) \geqslant \frac{\gamma_{g}}{1 + \gamma_{g}\tau_{0}}D_{\phix}(\bx,\bx_{k}) + \frac{1}{\sigma_{k}}D_{\phiys}(\by^{*},\by_{k}^{*}).
\end{equation}
To do so, use Fact~\ref{fact:cauchy_ineq_cont} with $\alpha = \theta_{k}/(\tau_{k}\normop{\matr{A}})$ and assumption (A5) to get
\[
\left | \left\langle\by_{k}^{*}-\by_{k-1}^{*},\matr{A}(\bx-\bx_{k}) \right\rangle \right |  \leqslant \frac{\theta_{k}}{\tau_{k}}D_{\phix}(\bx,\bx_{k}) + \frac{\tau_{k}\normop{\matr{A}}^2}{\theta_{k}} D_{\phiys}(\by_{k}^{*},\by_{k-1}^{*}).
\]
By the second and third recurrence relations in~\eqref{eq:acc_sq_exact_iter}, we have the identity
\begin{equation}\label{eq:app_identity_rr}
    \tau_{k}\sigma_{k}\normop{\matr{A}}^2 = 1.
\end{equation}
Use this identity in the previous inequality to find
\begin{equation*}
\left | \left\langle\by_{k}^{*}-\by_{k-1}^{*},\matr{A}(\bx-\bx_{k}) \right\rangle \right |  \leqslant  \frac{\theta_{k}}{\tau_{k}}D_{\phix}(\bx,\bx_{k}) + \frac{1}{\sigma_{k}\theta_{k}} D_{\phiys}(\by_{k}^{*},\by_{k-1}^{*}).
\end{equation*}
Substitute in $\Delta_{k}(\bx,\by^{*})$ to get
\begin{equation*}
\begin{alignedat}{1}
\Delta_{k}(\bx,\by^{*}) &\geqslant \frac{1}{\tau_{k}}D_{\phix}(\bx,\bx_{k}) + \frac{1}{\sigma_{k}}D_{\phiys}(\by^{*},\by_{k}^{*}) + \frac{1}{\sigma_{k}}D_{\phiys}(\by_{k}^{*},\by_{k-1}^{*}) \\
&\qquad  - \theta_{k}\left(\frac{\theta_{k}}{\tau_{k}}D_{\phix}(\bx,\bx_{k}) + \frac{1}{\sigma_{k}\theta_{k}}D_{\phiys}(\by_{k}^{*},\by_{k-1}^{*})\right)\\
&=  \frac{1-\theta_{k}^{2}}{\tau_{k}}D_{\phix}(\bx,\bx_{k}) + \frac{1}{\sigma_{k}}D_{\phiys}(\by^{*},\by_{k}^{*}).
\end{alignedat}
\end{equation*}
The first and second recurrence relations in~\eqref{eq:acc_sq_exact_iter} imply
\[
\frac{1-\theta^{2}_{k}}{\tau_{k}} = \frac{1}{\tau_{k}}\left(1 - \frac{1}{1 + \gamma_{g}\tau_{k-1}}\right) = \frac{1}{\tau_{k}}\left(\frac{\gamma_{g}\tau_{k-1}}{1 + \gamma_{g}\tau_{k-1}}\right) = \frac{1}{\theta_{k}}\left(\frac{\gamma_{g}}{1 + \gamma_{g}\tau_{k-1}}\right) = \frac{\gamma_{g}}{\theta_{k} + \gamma_{g}\tau_{k}} \geqslant \frac{\gamma_{g}}{1 + \gamma_{g}\tau_{0}}.
\]
Hence
\[
\Delta_{k}(\bx,\by^{*}) \geqslant \frac{\gamma_{g}}{1 + \gamma_{g}\tau_{0}}D_{\phix}(\bx,\bx_{k}) + \frac{1}{\sigma_{k}}D_{\phiys}(\by^{*},\by_{k}^{*}),
\]
which proves the auxiliary result~\eqref{eq:bound_delta_below}.

\bigbreak
\noindent
\textbf{Part 2.} Let $(\bx,\by^{*}) \in \dom g \times \dom h^*$. By assumption (A1)-(A6), Lemma~\ref{lem:descent_rule} holds, and we can apply the improved descent rule~\eqref{eq:descent_rule_sc} to the $(k+1)^{\mathrm{th}}$ iterate given by the accelerated nonlinear PDHG method~\eqref{eq:accI_pdhg_alg} with the initial points $(\bxb, \bysb) = (\bx_{k},\by_{k}^{*})$, intermediate points $(\bxt,\byst) = (\bx_{k+1}, \by_{k}^{*} + \theta_{k}(\by_{k}^{*} - \by_{k-1}^{*}))$, output points $(\bxh,\bysh) = (\bx_{k+1},\by_{k+1}^{*})$, strong convexity constants $\gamma_g > 0$ and $\gamma_{h^*} = 0$, and parameters $\tau = \tau_{k}$, $\sigma = \sigma_{k}$ and $\theta = \theta_{k}$ to get
\begin{equation}\label{eq:prop3_descent_rule_sc}
    \begin{alignedat}{1}
    \mathcal{L}(\bx_{k+1},\by^{*}) - \mathcal{L}(\bx,\by_{k+1}^{*}) &\leqslant \frac{1}{\tau_{k}}\left(D_{\phix}(\bx,\bx_{k}) - D_{\phix}(\bx_{k+1},\bx_{k}) - \left(1 + \gamma_g\tau_{k}\right)D_{\phix}(\bx,\bx_{k+1})\right) \\
    &\quad + \frac{1}{\sigma_{k}}\left(D_{\phiys}(\bys,\by_{k}^{*}) - D_{\phiys}(\by_{k+1}^{*},\by_{k}^{*}) - D_{\phiys}(\bys,\by_{k+1}^{*})\right) \\
    &\quad + \left\langle \by_{k}^{*} + \theta_{k}(\by_{k}^{*} - \by_{k-1}^{*}) - \by_{k+1}^{*}, \matr{A}(\bx  - \bx_{k+1})\right\rangle. \\
    \end{alignedat}
\end{equation}

We wish to bound the last line on the right hand side of~\eqref{eq:prop3_descent_rule_sc} to eliminate the Bregman divergence term $D_{\phix}(\bx_{k+1},\bx_{k})$. To do so, first distribute the last line on the right hand side of~\eqref{eq:prop3_descent_rule_sc} as
\begin{equation}\label{eq:app_d_term1}
\theta_{k}\left\langle \by_{k}^{*} - \by_{k-1}^{*}, \matr{A}(\bx - \bx_{k+1})\right\rangle - \left\langle \by_{k+1}^{*} - \by_{k}^{*}, \matr{A}(\bx - \bx_{k+1}) \right\rangle.
\end{equation}
Write $\bx-\bx_{k+1} = (\bx-\bx_{k}) + (\bx_{k} - \bx_{k+1})$ and substitute in~\eqref{eq:app_d_term1} to get
\begin{equation}\label{eq:app_d_term2}
\begin{alignedat}{1}
\theta_{k}\left\langle \by_{k}^{*} - \by_{k-1}^{*}, \matr{A}(\bx - \bx_{k})\right\rangle &- \theta_{k}\left\langle \by_{k}^{*} - \by_{k-1}^{*}, \matr{A}(\bx_{k+1} - \bx_{k})\right\rangle \\
&\quad - \left\langle \by_{k+1}^{*} - \by_{k}^{*}, \matr{A}(\bx - \bx_{k+1}) \right\rangle.
\end{alignedat}
\end{equation}
Next, use Fact~\ref{fact:cauchy_ineq_cont} with $(\bx,\bys) = (\bx_{k+1},\by_{k}^{*})$, $(\bx',\bys') = (\bx_{k},\by_{k-1}^{*})$ and $\alpha = 1/(\theta_{k}\tau_{k}\normop{\matr{A}})$, and assumption (A5), identity~\eqref{eq:app_identity_rr} derived in Part 1 to bound the second bilinear form in~\eqref{eq:app_d_term2} as follows:
\begin{equation} \label{eq:app_d_term4}
\left |\left\langle \by_{k}^{*} - \by_{k-1}^{*}, \matr{A}(\bx_{k+1} - \bx_{k})\right\rangle \right| \leqslant \frac{1}{\theta_{k}\tau_{k}}D_{\phix}(\bx_{k+1},\bx_{k}) + \frac{\theta_{k}}{\sigma_{k}} D_{\phiys}(\by_{k}^{*},\by_{k-1}^{*}).
\end{equation}
Finally, use~\eqref{eq:app_d_term1},~\eqref{eq:app_d_term2}, and~\eqref{eq:app_d_term4} to eliminate the Bregman divergence term $D_{\phix}(\bx_{k+1},\bx_{k})$ on the right hand side of the descent rule~\eqref{eq:prop3_descent_rule_sc} and rearrange to get
\begin{equation}\label{eq:app_d_dr1}
\begin{alignedat}{1}
    &\mathcal{L}(\bx_{k+1},\bys) - \mathcal{L}(\bx,\by_{k+1}^{*}) + \left(\frac{1+\gamma_g\tau_{k}}{\tau_{k}}\right)D_{\phix}(\bx,\bx_{k+1}) + \frac{1}{\sigma_{k}}D_{\phiys}(\bys,\by_{k+1}^{*}) \\
    &\qquad + \frac{1}{\sigma_{k}}D_{\phiys}(\by_{k+1}^{*},\by_{k}^{*}) + \left\langle\by_{k+1}^{*}-\by_{k}^{*},\matr{A}(\bx-\bx_{k+1}) \right\rangle \\
    &\leqslant \frac{1}{\tau_{k}}D_{\phix}(\bx,\bx_{k}) + \frac{1}{\sigma_{k}}D_{\phiys}(\bys,\by_{k}^{*}) \\
    &\qquad + \frac{\theta_{k}^2}{\sigma_{k}}D_{\phiys}(\by_{k}^{*},\by_{k-1}^{*}) + \theta_{k}\left\langle\by_{k}^{*}-\by_{k-1}^{*},\matr{A}(\bx-\bx_{k}) \right\rangle
\end{alignedat}
\end{equation}

We now want to express both sides of inequality~\eqref{eq:app_d_dr1} in terms of $\Delta_{k}(\bx,\by^{*})$ and $\Delta_{k+1}(\bx,\by^{*})$, starting from the left hand side. Note that the recurrence relations~\eqref{eq:acc_sq_exact_iter} imply
\[
\frac{1+\gamma_g\tau_{k}}{\tau_{k}} = \frac{1}{\theta_{k+1}\tau_{k+1}} \quad \mathrm{and} \quad \frac{1}{\sigma_{k}} = \frac{1}{\theta_{k+1}\sigma_{k+1}}.
\]
As such, the left hand side of~\eqref{eq:app_d_dr1} admits the lower bound
\begin{equation}\label{eq:app_d_dr_lhs}
    \mathcal{L}(\bx_{k+1},\bys) - \mathcal{L}(\bx,\by_{k+1}^{*}) + \Delta_{k+1}(\bx,\by^{*})/\theta_{k+1}.
\end{equation}
Since $0 \leqslant \theta_{k} \leqslant 1$, we have
\[
\frac{\theta_{k}^2}{\sigma_{k}}D_{\phiys}(\by_{k}^{*},\by_{k-1}^{*}) \leqslant \frac{1}{\sigma_{k}}D_{\phiys}(\by_{k}^{*},\by_{k-1}^{*}).
\]
Hence the right hand side of~\eqref{eq:app_d_dr1} is bounded from above by $\Delta_{k}(\bx,\by^{*})$. In summary, we find
\begin{equation*}
\mathcal{L}(\bx_{k+1},\bys) - \mathcal{L}(\bx,\by_{k+1}^{*}) \leqslant \Delta_{k}(\bx,\by^{*}) - \Delta_{k+1}(\bx,\by^{*})/\theta_{k+1}.
\end{equation*}
This proves the descent rule~\eqref{eq:descent_rule_s_acc_I}. 

\bigbreak
\noindent
\textbf{Part 3.} Use~\eqref{eq:descent_rule_s_acc_I}, the third recurrence relation in~\eqref{eq:acc_sq_exact_iter}, and the averages $T_{K}$, $\bX_{K}$ and $\bY_{K}^{*}$ to compute the weighted sum
\begin{equation}\label{eq:app_d_dr3}
\begin{alignedat}{1}
T_{K}\left(\mathcal{L}(\bX_{K},\bys) - \mathcal{L}(\bx,\bY_{K}^{*})\right) & \leqslant \sum_{k=1}^{K}\frac{\sigma_{k-1}}{\sigma_{0}}\left(\mathcal{L}(\bx_{k},\bys) - \mathcal{L}(\bx,\by_{k}^{*})\right)\\
&\leqslant \sum_{k=1}^{K}\frac{\sigma_{k-1}}{\sigma_{0}}\left(\Delta_{k-1}(\bx,\by^{*}) -  \Delta_{k}(\bx,\by^{*})/\theta_{k}\right) \\
&= \sum_{k=1}^{K}\left(\frac{\sigma_{k-1}}{\sigma_{0}}\Delta_{k-1}(\bx,\by^{*}) - \frac{1}{\theta_{k}}\frac{\sigma_{k-1}}{\sigma_{0}}\Delta_{k}(\bx,\by^{*})\right) \\
&= \sum_{k=1}^{K}\left(\frac{\sigma_{k-1}}{\sigma_{0}}\Delta_{k-1}(\bx,\by^{*}) - \frac{\sigma_{k}}{\sigma_{0}}\Delta_{k}(\bx,\by^{*})\right) \\
&= \Delta_{0}(\bx,\by^{*}) - \frac{\sigma_{K}}{\sigma_{0}}\Delta_{K}(\bx,\by^{*}). \\
\end{alignedat}
\end{equation}
% Note that for the choice of initial points $\by_{-1}^{*} = \by_{0}^{*}$, the term $\Delta_{0}(\bx,\by^{*})$ is given by
% \begin{equation}\label{eq:prop3_deltak_ineq1}
% \Delta_{0}(\bx,\by^{*}) = \frac{1}{\tau_{0}}D_{\phix}(\bx,\bx_{0}) + \frac{1}{\sigma_{0}}D_{\phiys}(\bys,\by_{0}^{*}).
% \end{equation}
% Now, use inequality~\eqref{eq:bound_delta_below} with $k = K$ to bound $\Delta_{K}(\bx,\by^{*})$ from below and combine it with inequality~\eqref{eq:app_d_dr3} and equation~\eqref{eq:prop3_deltak_ineq1} to get
% \begin{equation}\label{eq:prop3_deltak_ineq6}
% \begin{alignedat}{1}
%     T_{K}\left(\mathcal{L}(\bX_{K},\bys) - \mathcal{L}(\bx,\bY_{K}^{*})\right) &\leqslant \frac{1}{\tau_{0}} D_{\phix}(\bx,\bx_{0}) + \frac{1}{\sigma_{0}}D_{\phiys}(\bys,\by_{0}^{*}) \\
%     &\qquad - \frac{\sigma_{K}}{\sigma_{0}}\left(\frac{1-\theta_{K}^2}{\tau_{K}} D_{\phix}(\bx,\bx_{K}) + \frac{1}{\sigma_{K}}D_{\phiys}(\bys,\by_{K}^{*})\right).
% \end{alignedat}
% \end{equation}
This proves the estimate~\eqref{eq:acc_sq__rateI}. Finally, substitute the saddle point $(\bx_{s},\by_{s}^{*})$ for $(\bx,\by^{*})$ in inequality~\eqref{eq:app_d_dr3} and use the saddle-point property $\mathcal{L}(\bx_{k},\by_{s}^{*}) - \mathcal{L}(\bx_{s},\by_{k}^{*}) \geqslant 0$ to get
\[
\Delta_{K}(\bx_{s},\by_{s}^{*}) \leqslant \frac{\sigma_0}{\sigma_{K}}\Delta_{0}(\bx_{s},\by_{s}). 
\]
%Multiply the previous inequality throughout by $\tau_{K}$ and use identity~\eqref{eq:app_identity_rr} to get
%\[
%(1-\theta_{K}^{2})\normop{\matr{A}}^2 D_{\phix}(\bx_{s},\bx_{K}) + \frac{1}{\sigma_{K}^{2}}D_{\phiys}(\by_{s}^{*},\by_{K}^{*}) \leqslant \frac{\sigma_0}{\sigma_{K}^{2}}\left(\frac{1}{\tau_{0}} D_{\phix}(\bx_{s},\bx_{0}) + \frac{1}{\sigma_{0}}D_{\phiys}(\by_{s}^{*},\by_{0}^{*})\right).
%\]
The global bound~\eqref{eq:bound_seq_acc_I} follows from this upper bound and the lower bound~\eqref{eq:bound_delta_below} derived in Part 1.

\bigbreak
\noindent
\textbf{Part 4.} Substitute the identity~\eqref{eq:app_identity_rr} in the third recurrence relation of~\eqref{eq:acc_sq_exact_iter} and take the square to get the nonlinear recurrence relation
\begin{equation}\label{eq:app_d:taverage1}
    \sigma_{k+1}^{2} = \sigma_{k}^{2} + \gamma_g\sigma_{k}/\normop{\matr{A}}^2.
\end{equation}
Use this to express the average quantity $T_{K}$ as a telescoping sum:
\begin{equation*}
    T_{K} = \sum_{k=1}^{K} \frac{\sigma_{k}}{\sigma_{0}} = \sum_{k=1}^{K}\left( \normop{\matr{A}}^2\left(\sigma_{k}^{2} - \sigma_{k-1}^2\right)/\gamma_g \sigma_0\right) = \normop{\matr{A}}^2\left(\sigma_{K}^2 - \sigma_{0}^2\right)/(\gamma_g\sigma_0).
\end{equation*}
This proves formula~\eqref{eq:acc_exact_t_avg}.

We now compute the bounds in~\eqref{eq:acc_I_avg_bounds}, starting with the upper bound. Let $a = \gamma_g/(2\normop{\matr{A}}^2)$, and use this quantity in equation~\eqref{eq:app_d:taverage1} to derive a simple upper bound on $\sigma_{K}$:
\[
\begin{alignedat}{1}
\sigma_{K}^2 &= \sigma_{K-1}^2 + 2a\sigma_{K-1} \\
&\leqslant \sigma_{K-1}^2 + 2a\sigma_{K-1} + a^2 \\
&= (\sigma_{K-1} + a)^2.
\end{alignedat}
\]
Take the square root to find
\[
\sigma_{K} \leqslant \sigma_{K-1} + a.
\]
A simple calculation gives
\begin{equation*}
\sigma_{K} \leqslant \sigma_{0} + aK.
\end{equation*}
Hence
\[
T_{K} = \frac{\sigma_{K}^2 - \sigma_{0}^2}{2a\sigma_{0}} \leqslant \frac{(\sigma_{0} + aK)^2 - \sigma_{0}^2}{2a\sigma_{0}} = K + \frac{a}{2\sigma_{0}}K^2.
\]
which proves the upper bound in inequality~\eqref{eq:acc_I_avg_bounds}. 

The lower bound in inequality~\eqref{eq:acc_I_avg_bounds} is the same as derived in~\citet[Section 5.2, Equation (41)]{chambolle2016ergodic}, but here we give a different proof. Use~\eqref{eq:app_d:taverage1} to derive a lower bound on $\sigma_{K}$:
\[
\begin{alignedat}{1}
\sigma_{K}^2 &= \sigma_{K-1}^2 + 2a\sigma_{K-1} \\
&= \sigma_{K-1}^2 + 2a\sigma_{K-1}\left(\frac{\sigma_0}{a + \sigma_0} + \frac{a}{a+\sigma_0}\right) \\
&= \sigma_{K-1}^2 + \frac{2a\sigma_{0}\sigma_{K-1}}{a+\sigma_{0}} + \frac{2a^2\sigma_{K-1}}{a + \sigma_0} \\
& \geqslant \sigma_{K-1}^2 + \frac{2a\sigma_{0}\sigma_{K-1}}{a+\sigma_{0}} + \frac{2a^2\sigma_{K-1}}{(a + \sigma_0)}\frac{\sigma_0}{(a + \sigma_0)} \\
& \geqslant \sigma_{K-1}^2 + \frac{2a\sigma_{0}\sigma_{K-1}}{a+\sigma_{0}} + \frac{2a^2\sigma_{0}^2}{(a + \sigma_0)^2} \\
&= \left(\sigma_{K-1} + \frac{a\sigma_0}{a+\sigma_0}\right)^2,
\end{alignedat}
\]
where on the fifth line we used that $\sigma_{k} \geqslant \sigma_0$ for every nonnegative integer $k$, as per the third recurrence relation~\eqref{eq:acc_sq_exact_iter}. We have found
\[
\sigma_{K} \geqslant \sigma_{K-1} + \frac{a\sigma_0}{a + \sigma_0}
\]
which implies, after a simple calculation,
\[
\sigma_{K} \geqslant  \sigma_{0} + \frac{a\sigma_{0}}{a + \sigma_{0}}K.
\]
Hence
\[
\begin{alignedat}{1}
T_{K} = \frac{\sigma_{K}^{2} - \sigma_{0}^{2}}{2a\sigma_{0}} &\geqslant \frac{\sigma_{0}^{2}}{2a\sigma_{0} }\left[\left(1 + \frac{a}{a + \sigma_{0}}K\right)^2 - 1\right]\\
&= \frac{\sigma_{0}}{2a}\left[\frac{2a}{a + \sigma_{0}}K + \frac{a^2}{(a+\sigma_{0})^2}K^2\right] \\
&= \frac{\sigma_{0}}{a + \sigma_{0}}K + \frac{a\sigma_{0}}{2(a + \sigma_{0})^2}K^2,
\end{alignedat}
\]
which proves the lower bound in inequality~\eqref{eq:acc_I_avg_bounds}.

\bigbreak
\noindent
\textbf{Part 5.} First, combine the auxiliary result~\eqref{eq:bound_delta_below} and global bound~\eqref{eq:bound_seq_acc_I} to get the inequality.
\[
\frac{\gamma_{g}}{1 + \gamma_{g}\tau_{0}}D_{\phix}(\bx_{s},\bx_{K}) + \frac{1}{\sigma_{k}}D_{\phiys}(\by_{s}^{*},\by_{K}^{*}) \leqslant \frac{1}{\tau_{0}} D_{\phix}(\bx_{s},\bx_{0}) + \frac{1}{\sigma_{0}}D_{\phiys}(\by_{s}^{*},\by_{0}^{*}). 
\]
As a consequence, we have that
\begin{equation}\label{eq:app_bound_xk}
0 \leqslant \frac{\gamma_{g}}{1 + \gamma_{g}\tau_{0}}D_{\phix}(\bx_{s},\bx_{K}) \leqslant \frac{\sigma_{0}}{\sigma_{K}}\left(\frac{1}{\tau_{0}} D_{\phix}(\bx_{s},\bx_{0}) + \frac{1}{\sigma_{0}}D_{\phiys}(\by_{s}^{*},\by_{0}^{*})\right)
\end{equation}
and
\begin{equation}\label{eq:app_bound_yk}
0 \leqslant \frac{1}{\sigma_{0}}D_{\phiys}(\by_{s}^{*},\by_{K}^{*}) \leqslant \frac{1}{\tau_{0}} D_{\phix}(\bx_{s},\bx_{0}) + \frac{1}{\sigma_{0}}D_{\phiys}(\by_{s}^{*},\by_{0}^{*}). 
\end{equation}
These inequality immediately imply that the sequence of iterates $\{\bx_{K},\by_{K}^{*}\}_{K=1}^{+\infty}$ is bounded. It follows from the definitions of the averages $\bX_{K}$ and $\bY_{K}^{*}$ that the sequence of averages $\{(\bX_{K},\bY_{K}^{*})\}_{K=1}^{+\infty}$ is also bounded.

Now, thanks to Fact~\ref{fact:weak_conv_bdd_seq} there is a subsequence $\{(\bX_{K_l},\bY_{K_l}^{*})\}_{l=1}^{+\infty}$ that converges weakly to some point $(\bX,\bYs) \in \xcal \times \ycals$. We claim that $(\bX,\bYs)$ is a saddle point of the Lagrangian~\eqref{eq:lagrangian}. To see this, use inequality~\eqref{eq:acc_sq__rateI} with $(\bx,\by^*) \in \dom g \times \dom h^*$ and take the infimum limit $l \to +\infty$ to get
\[
\liminf_{l \to +\infty} \mathcal{L}(\bX_{K_{l+1}},\bys) - \mathcal{L}(\bx,\bY_{K_{l+1}}^{*}) \leqslant \liminf_{l \to +\infty} \frac{1}{T_{K_{l+1}}}\left(\frac{1}{\tau_{0}}D_{\phix}(\bx,\bx_{0}) + \frac{1}{\sigma_{0}}D_{\phiys}(\by^{*},\by_{0}^{*})\right) = 0.
\]
The lower semicontinuity property of the functions $g$ and $h^*$ implies
\[
0 \geqslant \liminf_{l \to +\infty} \left(\mathcal{L}(\bX_{K_l},\bys) - \mathcal{L}(\bx,\bY_{K_l}^{*})\right) \geqslant \mathcal{L}(\bX,\bys) - \mathcal{L}(\bx,\bYs),
\]
from which we find
\[
\mathcal{L}(\bX,\bys) \leqslant \mathcal{L}(\bx,\bYs).
\]
As the pair of points $(\bx,\bys) \in \xcal\times\ycals$ was arbitrary, we conclude that $(\bX,\bYs)$ is a saddle point of the Lagrangian~\eqref{eq:lagrangian}. Moreover, we deduce from Remark~\ref{rem:uniqueness} that $\bX$ coincides with the unique solution $\bx_{s}$ of the primal problem~\eqref{eq:primal_prob}, i.e., $\bX = \bx_{s}$.

Next, we show that the individual sequences $\{\bx_{k}\}_{k=1}^{+\infty}$ and $\{\bX_{K}\}_{K=1}^{+\infty}$ converge strongly to the unique solution $\bx_{s}$ of the primal problem~\eqref{eq:primal_prob}. The strong convergence of $\bx_{k}$ is evident from~\eqref{eq:app_bound_xk}, the limit $\lim_{K \to +\infty} \sigma_{K} = +\infty$ from~\eqref{eq:acc_exact_t_avg} and~\eqref{eq:acc_I_avg_bounds}, and assumption (A5):
\[
\begin{alignedat}{1}
0 \leqslant \lim_{K \to +\infty} \frac{1}{2}\normsq{\bx_{s}-\bx_{K}} &\leqslant \lim_{K \to +\infty} D_{\phix}(\bx_{s},\bx_{K}) \\
&\leqslant \lim_{K \to +\infty} \left(\frac{1 + \gamma_{g}\tau_{0}}{\gamma_{g}}\right)\frac{\sigma_{0}}{\sigma_{K}}\left(\frac{1}{\tau_{0}} D_{\phix}(\bx_{s},\bx_{0}) + \frac{1}{\sigma_{0}}D_{\phiys}(\by_{s}^{*},\by_{0}^{*})\right) \\
&= 0.
\end{alignedat}
\]
We further deduce from Fact~\ref{fact:weighted_averages} that the sequence $\{\bX_{K}\}_{K=1}^{+\infty}$ converges strongly to the same limit $\bx_{s}$.

Suppose now that $\ycals$ is finite-dimensional. Since the sequence $\{(\bx_{k},\by_{k}^{*})\}_{k=1}^{+\infty}$ is bounded and $\bx_k$ converges strongly to $\bx_{s}$, there is some subsequence $\{(\bx_{k_l},\by_{k_l}^{*})\}_{k=1}^{+\infty}$ that converges strongly to a point $(\bx_{s},\by_{s}^{*}) \in \xcal \times \ycals$. By Fact~\ref{fact:weighted_averages}, the subsequence of averages $\{(\bX_{k_l},\bY_{k_l}^{*})\}_{k=1}^{+\infty}$ also strongly converges to $(\bx_{s},\by_{s}^{*})$. A similar argument as the one described two paragraphs before shows then that $(\bx_{s},\by_{s}^{*})$ is a saddle point of the Lagrangian~\eqref{eq:lagrangian}, and moreover from Fact~\ref{fact:saddle_prob} we deduce that $\by_{s}^{*}$ is a solution to the dual problem~\eqref{eq:dual_prob}. This concludes the proof.

%%%
% 1) Use that sigma_{k+1} \Delta_{k+1} \leqslant \sigma_{k}\Delta{k}
% 2) Limit is decreasing
% 3) Show that the limit is decreasing to zero. We need to use the subsequence + new limit + the fact that D(Y_{kl},Y_{kl-1}) is decreasing to zero.
% 4) The limit is decreasing to zero. Then use (82) to conclude the result.
%%%

\section{Proof of Proposition~\ref{prop:acc_sq_algIII}}\label{app:D}
We divide the proof into four parts, first deriving an auxiliary result, and then proving in turn the descent rule~\eqref{eq:descent_rule_s_acc_II} (Proposition~\ref{prop:acc_sq_algIII}(a)), the estimate~\eqref{eq:acc_sq__rateIII} and global bound~\eqref{eq:bound_seq_acc_II} (Proposition~\ref{prop:acc_sq_algIII}(b)), and the convergence properties of the accelerated nonlinear PDHG method~\eqref{eq:accIII_pdhg_alg} (Proposition~\ref{prop:acc_sq_algIII}(c)).

\bigbreak
\noindent
\textbf{Part 1.} First, we show that for every $(\bx,\by^{*}) \in \dom g \times \dom h^*$ and $k \in \mathbb{N}$, the quantity $\Delta_{k}(\bx,\by^{*})$ satisfies the lower bound
\begin{equation}\label{eq:app_delta-acc-III}
    \Delta_{k}(\bx,\by^{*}) \geqslant \frac{1}{\sigma}D_{\phiys}(\by^{*},\by_{k}^{*}).
\end{equation}
To do so, use Fact~\ref{fact:cauchy_ineq_cont} with $\alpha = 1/(\tau\theta \normop{\matr{A}})$ and use assumption (A5) to find
\[
\left | \left\langle\by_{k}^{*}-\by_{k-1}^{*},\matr{A}(\bx-\bx_{k}) \right\rangle \right |  \leqslant  \frac{1}{\tau\theta}D_{\phix}(\bx,\bx_{k}) + \tau\theta\normop{\matr{A}}^2 D_{\phiys}(\by_{k}^{*},\by_{k-1}^{*}).
\]
From the choice of parameters in~\eqref{eq:accIII_params}, we have the identity
\begin{equation}\label{eq:app_id-acc-III}
    \tau\sigma\theta\normop{\matr{A}}^2 = 1.
\end{equation}
Use this identity in the previous inequality to find
\begin{equation}\label{eq:app_part1_id_acc_III}
    \left | \left\langle\by_{k}^{*}-\by_{k-1}^{*},\matr{A}(\bx-\bx_{k}) \right\rangle \right |  \leqslant  \frac{1}{\tau\theta}D_{\phix}(\bx,\bx_{k}) + \frac{1}{\sigma} D_{\phiys}(\by_{k}^{*},\by_{k-1}^{*}).
\end{equation}
Substitute in $\Delta_{k}(\bx,\by^{*})$ to get
\begin{equation*}
\begin{alignedat}{1}
\Delta_{k}(\bx,\by^{*}) &\geqslant \frac{1}{\tau}D_{\phix}(\bx,\bx_{k}) + \frac{1}{\sigma}D_{\phiys}(\by^{*},\by_{k}^{*}) + \frac{\theta}{\sigma}D_{\phiys}(\by_{k}^{*},\by_{k-1}^{*}) \\
&\qquad  - \theta\left( \frac{1}{\tau\theta}D_{\phix}(\bx,\bx_{k}) + \frac{1}{\sigma}D_{\phiys}(\by_{k}^{*},\by_{k-1}^{*})\right)\\
&=  \frac{1}{\sigma}D_{\phiys}(\by^{*},\by_{k}^{*}).
\end{alignedat}
\end{equation*}
This proves the auxiliary result~\eqref{eq:app_delta-acc-III}.

\bigbreak
\noindent
\textbf{Part 2.} Let $(\bx,\by^{*}) \in \dom g \times \dom h^*$. By assumptions (A1)-(A7), Lemma~\ref{lem:descent_rule} holds, and we can apply the improved descent rule~\eqref{eq:descent_rule_sc} to the $(k+1)^{\mathrm{th}}$ iterate given by the accelerated nonlinear PDHG method~\eqref{eq:accIII_pdhg_alg} with the initial points $(\bxb, \bysb) = (\bx_{k},\by_{k}^{*})$, intermediate points $(\bxt,\byst) = (\bx_{k+1}, \by_{k}^{*} + \theta(\by_{k}^{*} - \by_{k-1}^{*}))$, output points $(\bxh,\bysh) = (\bx_{k+1},\by_{k+1}^{*})$, the strong convexity constants $\gamma_g > 0$, $\gamma_{h^*} > 0$, and the parameters $\tau$, $\sigma$, and $\theta$ defined in~\eqref{eq:accIII_params}:
\begin{equation}\label{eq:prop4_descent_rule_sc}
    \begin{alignedat}{1}
    \mathcal{L}(\bx_{k+1},\by^{*}) - \mathcal{L}(\bx,\by_{k+1}^{*}) &\leqslant \frac{1}{\tau}\left(D_{\phix}(\bx,\bx_{k}) - D_{\phix}(\bx_{k+1},\bx_{k}) - (1 + \gamma_g\tau)D_{\phix}(\bx,\bx_{k+1})\right) \\
    &\quad + \frac{1}{\sigma}\left(D_{\phiys}(\by^{*},\by_{k}^{*}) - D_{\phiys}(\by_{k+1}^{*},\by_{k}^{*}) - (1 + \gamma_{h^*}\sigma)D_{\phiys}(\by^{*},\by_{k+1}^{*})\right) \\
    &\quad + \left\langle \by_{k}^{*} + \theta(\by_{k}^{*} - \by_{k-1}^{*}) - \by_{k+1}^{*}, \matr{A}(\bx  - \bx_{k+1})\right\rangle.
    \end{alignedat}
\end{equation}

We wish to bound the last line on the right hand side of~\eqref{eq:prop4_descent_rule_sc} to eliminate the Bregman divergence term $D_{\phix}(\bx_{k+1},\bx_{k})$. To do so, first distribute the last line on the right hand side of~\eqref{eq:prop4_descent_rule_sc} as
\begin{equation}\label{eq:app_e_term1}
\theta\left\langle \by_{k}^{*} - \by_{k-1}^{*}, \matr{A}(\bx - \bx_{k+1})\right\rangle - \left\langle \by_{k+1}^{*} - \by_{k}^{*}, \matr{A}(\bx - \bx_{k+1}) \right\rangle.
\end{equation}
Write $\bx-\bx_{k+1} = (\bx-\bx_{k}) + (\bx_{k} - \bx_{k+1})$ and substitute in~\eqref{eq:app_e_term1} to get
\begin{equation}\label{eq:app_e_term2}
\begin{alignedat}{1}
\theta\left\langle \by_{k}^{*} - \by_{k-1}^{*}, \matr{A}(\bx - \bx_{k})\right\rangle &- \theta\left\langle \by_{k}^{*} - \by_{k-1}^{*}, \matr{A}(\bx_{k+1} - \bx_{k})\right\rangle \\
&\quad - \left\langle \by_{k+1}^{*} - \by_{k}^{*}, \matr{A}(\bx - \bx_{k+1}) \right\rangle. \\
\end{alignedat}
\end{equation}
Next, use inequality~\eqref{eq:app_part1_id_acc_III} with $\bx = \bx_{k+1}$ derived in Part 1 to bound the second bilinear form in~\eqref{eq:app_e_term2} as follows:
\begin{equation} \label{eq:app_e_term4}
\left|\left\langle \by_{k}^{*} - \by_{k-1}^{*}, \matr{A}(\bx_{k+1} - \bx_{k})\right\rangle \right| \leqslant \frac{1}{\tau\theta}D_{\phix}(\bx_{k+1},\bx_{k}) + \frac{1}{\sigma} D_{\phiys}(\by_{k}^{*},\by_{k-1}^{*})
\end{equation}
Finally, use~\eqref{eq:app_e_term1},~\eqref{eq:app_e_term2}, and~\eqref{eq:app_e_term4} to eliminate the Bregman divergence term $D_{\phix}(\bx_{k+1},\bx_{k})$ on the right hand side of the descent rule~\eqref{eq:prop4_descent_rule_sc}:
\begin{equation}\label{eq:app_e_dr1}
\begin{alignedat}{1}
    &\mathcal{L}(\bx_{k+1},\by^{*}) - \mathcal{L}(\bx,\by_{k+1}^{*}) + \left(\frac{1+\gamma_g\tau}{\tau}\right)D_{\phix}(\bx,\bx_{k+1}) + \left(\frac{1+\gamma_{h^*}\sigma}{\sigma}\right)D_{\phiys}(\by^{*},\by_{k+1}^{*}) \\
    &\qquad + \frac{1}{\sigma}D_{\phiys}(\by_{k+1}^{*},\by_{k}^{*}) + \left\langle\by_{k+1}^{*}-\by_{k}^{*},\matr{A}(\bx-\bx_{k+1}) \right\rangle \\
    &\leqslant \frac{1}{\tau}D_{\phix}(\bx,\bx_{k}) + \frac{1}{\sigma}D_{\phiys}(\by^{*},\by_{k}^{*}) \\
    &\qquad + \frac{\theta}{\sigma}D_{\phiys}(\by_{k}^{*},\by_{k-1}^{*}) + \theta\left\langle\by_{k}^{*}-\by_{k-1}^{*},\matr{A}(\bx-\bx_{k}) \right\rangle
\end{alignedat}
\end{equation}

We now want to express both sides of inequality~\eqref{eq:app_e_dr1} in terms of $\Delta_{k}(\bx,\by^{*})$ and $\Delta_{k+1}(\bx,\by^{*})$, starting from the left hand side. Note that the choice of parameters in~\eqref{eq:accIII_params} implies
\[
1 + \gamma_g\tau = 1/\theta \quad \mathrm{and} \quad 1+\gamma_{h^{*}}\sigma = 1/\theta.
\]
As such, the left hand side of~\eqref{eq:app_e_dr1} is equal to
\[
\mathcal{L}(\bx_{k+1},\by^{*}) - \mathcal{L}(\bx,\by_{k+1}^{*}) + \Delta_{k+1}(\bx,\by^{*})/\theta,
\]
and the right hand side of~\eqref{eq:app_e_dr1} is equal to $\Delta_{k}(\bx,\by^{*})$. Put together, we find
\begin{equation}\label{eq:app_e_dr2}
\mathcal{L}(\bx_{k+1},\by^{*}) - \mathcal{L}(\bx,\by_{k+1}^{*}) \leqslant \Delta_{k}(\bx,\by^{*}) -  \Delta_{k+1}(\bx,\by^{*})/\theta.
\end{equation}
This proves the descent rule~\eqref{eq:descent_rule_s_acc_II}.

\bigbreak
\noindent
\textbf{Part 3.} Use~\eqref{eq:app_e_dr2} and the averages $T_{K}$, $\bX_{K}$ and $\bY_{K}^{*}$ to compute the sum
\begin{equation}\label{eq:app_e_dr3}
\begin{alignedat}{1}
T_{K}\mathcal{L}(\bX_{K},\by^{*}) - \mathcal{L}(\bx,\bY_{K}^{*}) &= \sum_{k=1}^{K}\frac{1}{\theta^{k-1}}\left(\mathcal{L}(\bx_{k},\by^{*}) - \mathcal{L}(\bx,\by_{k}^{*})\right)\\
&\leqslant \sum_{k=1}^{K}\frac{1}{\theta^{k-1}}\left(\Delta_{k-1}(\bx,\by^{*}) -  \Delta_{k}(\bx,\by^{*})/\theta\right) \\
&= \sum_{k=1}^{K}\left(\Delta_{k-1}(\bx,\by^{*})/\theta^{k-1} - \Delta_{k}(\bx,\by^{*})/\theta^{k}\right) \\
&= \Delta_{0}(\bx,\by^{*}) - \Delta_{K}(\bx,\by^{*})/\theta^{K}. \\
\end{alignedat}
\end{equation}
This proves the estimate~\eqref{eq:acc_sq__rateIII}. Finally, substitute the saddle point $(\bx_{s},\by_{s}^{*})$ for $(\bx,\by^{*})$ in inequality~\eqref{eq:app_e_dr3} and use the saddle-point property $\mathcal{L}(\bx_{k},\by_{s}^{*}) - \mathcal{L}(\bx_{s},\by_{k}^{*}) \geqslant 0$ to get
\[
\Delta_{K}(\bx_{s},\by^{*}_{s}) \leqslant \theta^{K}\Delta_{0}(\bx_{s},\by^{*}_{s}).
\]
The global bound~\eqref{eq:bound_seq_acc_II} follows from this upper bound and the lower bound~\eqref{eq:app_delta-acc-III} derived in Part 1.

\bigbreak
\noindent
\textbf{Part 4.} The global bound~\eqref{eq:bound_seq_acc_II}, assumption (A5), and Fact~\ref{fact:breg_div_props}(viii) immediately imply that the sequence of iterates $\{\by_{k}^{*}\}_{k=1}^{+\infty}$ converges strongly to $\by_{s}$. It follows from this and Fact~\ref{fact:weighted_averages} that the sequence of averages $\{\bY_{K}^{*}\}_{K=1}^{+\infty}$ also converges strongly to $\by_{s}^{*}$.

Now, consider inequality~\eqref{eq:app_e_dr3} with $(\bx,\by^{*}) = (\bx_{s},\by_{s}^{*})$ written in full:
\begin{equation}\label{eq:app_bound_xk_acc_III}
\begin{alignedat}{1}
\frac{1}{\tau}D_{\phix}(\bx_{s},\bx_{k}) + \frac{1}{\sigma}D_{\phiys}(\by_{s}^{*},\by_{k}^{*}) &+ \frac{\theta}{\sigma}D_{\phiys}(\by_{k}^{*},\by_{k-1}^{*}) + \theta\left\langle\by_{k}^{*}-\by_{k-1}^{*},\matr{A}(\bx_{s}-\bx_{k}) \right\rangle \\
&\qquad \leqslant \theta^{K}\left(\frac{1}{\tau}D_{\phix}(\bx_{s},\bx_{0}) + \frac{1}{\sigma}D_{\phiys}(\by_{s}^{*},\by_{0}^{*})\right).
\end{alignedat}
\end{equation}
We wish to bound the bilinear form on the left hand side to obtain a bound on $\bx_{k}$. To do so, use Fact~\ref{fact:cauchy_ineq_cont} with $\alpha = 1/(2\tau\theta\normop{\matr{A}})$ and identity~\eqref{eq:app_id-acc-III} to obtain the bound
\[
\left | \left\langle\by_{k}^{*}-\by_{k-1}^{*},\matr{A}(\bx_{s}-\bx_{k}) \right\rangle \right | \leqslant \frac{1}{4\tau\theta}\normx{\bx_{s}-\bx_{k}}^2 + \frac{1}{\sigma}\normys{\by_{k}^{*}-\by_{k-1}^{*}}^2.
\]
Substitute in~\eqref{eq:app_bound_xk_acc_III} to get
\[
\begin{alignedat}{1}
\frac{1}{\tau}D_{\phix}(\bx_{s},\bx_{k}) + \frac{1}{\sigma}D_{\phiys}(\by_{s}^{*},\by_{k}^{*}) &+
\frac{\theta}{\sigma}D_{\phiys}(\by_{k}^{*},\by_{k-1}^{*}) - \frac{\theta}{\sigma}\normys{\by_{k}^{*}-\by_{k-1}^{*}}^2 - \frac{1}{4\tau}\normx{\bx_{s}-\bx_{k}}^2 \\
&\qquad \leqslant \theta^{K}\left( \frac{1}{\tau}D_{\phix}(\bx_{s},\bx_{0}) + \frac{1}{\sigma}D_{\phiys}(\by_{s}^{*},\by_{0}^{*})\right).
\end{alignedat}
\]
Finally, use the inequalities $D_{\phiys}(\by_{s}^{*},\by_{k}^{*}) \geqslant 0$, $D_{\phiys}(\by_{k}^{*},\by_{k-1}^{*}) \geqslant 0$, and $D_{\phix}(\bx_{s},\bx_{k}) \geqslant \frac{1}{2}\normx{\bx_{s}-\bx_{k}}^2$ (thanks to assumption (A5) and Fact~\ref{fact:breg_div_props}(viii) with $m = 1$) to obtain
\[
\frac{1}{4\tau}\normx{\bx_{s}-\bx_{k}}^2 \leqslant \frac{\theta}{\sigma}\normys{\by_{k}^{*}-\by_{k-1}^{*}}^2 + \theta^{K}\left( \frac{1}{\tau}D_{\phix}(\bx_{s},\bx_{0}) + \frac{1}{\sigma}D_{\phiys}(\by_{s}^{*},\by_{0}^{*})\right).
\]
Taking the limit $k \to +\infty$ yields $\lim_{k \to +\infty} \bx_{k} = \bx_{s}$. It follows from this and Fact~\ref{fact:weighted_averages} that the sequence of averages $\{\bX_{K}\}_{K=1}^{+\infty}$ also converges strongly to $\bx_{s}$. This concludes the proof.

\bibliography{proj-bib}

\end{document}